\newcommand\myclearpage{\cleartooddpage
  [\thispagestyle{empty}]
  }
\newtheorem{theorem}{Theorem}[chapter]
\DeclareMathOperator*{\proj}{\textbf{proj}}
\DeclareMathOperator*{\RR}{\mathbb{R}}
\DeclareMathOperator*{\KKT}{\text{KKT}_\omega}
\def\prefacesection#1{
\chapter*{#1}
\addcontentsline{toc}{chapter}{#1}
}
\begin{document}

\def\thefootnote{\fnsymbol{footnote}}

\thispagestyle{empty}

\def\shiftdowna{0.32in}  
\def\shiftdownb{0.22in}  


\begin{center}
\textbf{{\large Research in Industrial Projects for Students}}\\

\vspace \shiftdowna
\includegraphics[width=0.4\textwidth]{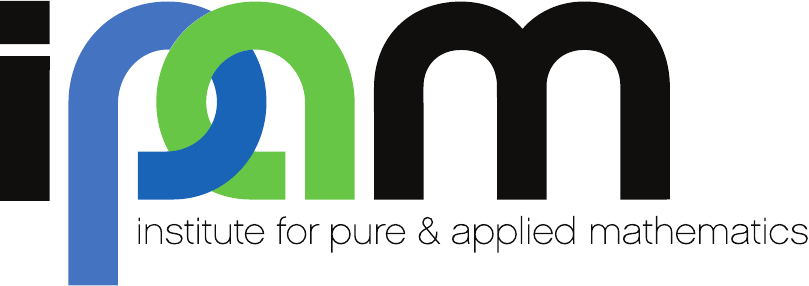}\\

\vspace \shiftdowna
\underline {Sponsor}\\ 
\vspace{5pt}
$\textbf{{\large Advanced Micro Devices, Inc.}}$ \\
\vspace \shiftdowna
\textbf{Final Report}

\vspace \shiftdowna
$\textbf{{\Large Accelerating Linear Programming (LP) Algorithm}}$\\
\vspace{3pt}
$\textbf{{\Large on AMD GPUs}}$


\vspace{0.35in}
\underline {Student Members}\\
\vspace{5pt}
$\text{Connor Phillips}$ (Project Manager), $ \text{\emph{James Madison University}}$,\\ 
\vspace{3pt}
$ \text{\texttt{phill3cm@dukes.jmu.edu}} $\\
\vspace{5pt}
$\text{Xiyan Hu}$, $\text{\emph{Colgate University}}$, $ \text{\texttt{xhu@colgate.edu}}$ \\
\vspace{3pt}
$\text{Titus Parker}$, $\text{\emph{Stanford University}}$, $ \text{\texttt{titus000@stanford.edu}}$ \\
\vspace{3pt}
$ \text{Yifa Yu}$, $\text{\emph{University of California, Davis}}$, $
\text{\texttt{yfayu@ucdavis.edu}}$\\

\vspace \shiftdownb
\underline {Academic Mentor} \\
\vspace{5pt}
$\text{Minxin Zhang}$, $ \text{\texttt{minxinzhang@math.ucla.edu}}$

\vspace \shiftdownb
\underline {Sponsoring Mentors}\\
\vspace{5pt}
$\text{Alireza Kaviani}$, $\text{\texttt{alireza.kaviani@amd.com}}$\\


\vspace \shiftdowna
$\text{Date: August 20, 2025}$ 

\end{center}

\vfill  
\footnoterule
\noindent \small{This project was jointly supported by $\text{Advanced Micro Devices, Inc.}$ and NSF Grant $\text{DMS}$ 1925919.}

\ifthenelse{\boolean{@twoside}}{\myclearpage}{}
\prefacesection{Abstract}

%
%


%

%
%
%
%
%
	%

Linear Programming (LP) is a foundational optimization technique with widespread applications in finance, energy trading, and supply chain logistics. However, traditional Central Processing Unit (CPU)-based LP solvers often struggle to meet the latency and scalability demands of dynamic, high-dimensional industrial environments, creating a significant computational challenge. This project addresses these limitations by accelerating linear programming on AMD Graphics Processing Units (GPUs), leveraging the ROCm open-source platform and PyTorch.

The core of this work is the development of a robust, high-performance, open-source implementation of the Primal-Dual Hybrid Gradient (PDHG) algorithm, engineered specifically for general LP problems on AMD hardware. Performance is evaluated against standard LP test sets and established CPU-based solvers, with a particular focus on challenging real-world instances including the Security-Constrained Economic Dispatch (SCED) to guide hyperparameter tuning. Our results show a significant improvement, with up to a 36x speedup on GPU over CPU for large-scale problems, highlighting the advantages of GPU acceleration in solving complex optimization tasks.

\vspace{24pt}


\ifthenelse{\boolean{@twoside}}{\myclearpage}{}
\prefacesection{Acknowledgments}

We would like to express our sincere gratitude to our academic mentor, Dr. Minxin Zhang, for her invaluable guidance and steadfast mentorship throughout this research project. We are also deeply appreciative of our industry mentor, Dr. Alireza Kaviani, for his crucial support and industry perspective. Furthermore, we wish to extend our special thanks to Dr. Amir Mousavi and Dr. Steven Diamond at Gridmatic. Their insightful feedback and their generosity in providing benchmark data for the real-world problems were instrumental to our work.

This research was also made possible by the support from several organizations. We thank the Institute for Pure and Applied Mathematics (IPAM) for their support, with special thanks to the Research in Industrial Projects for Students (RIPS) program director, Prof. Susana Serna. We are grateful to Advanced Micro Devices, Inc. (AMD) for providing resources through the AI \& HPC Research Cluster, with thanks to Dr. Karl W. Schulz, and for access to the AMD Developer Cloud, with thanks to Dr. Thomas Papatheodore. We also acknowledge the UCLA VAST lab for providing access to the UCLA Heterogeneous Accelerated Compute Clusters.

\ifthenelse{\boolean{@twoside}}{\myclearpage}{}
\tableofcontents

\ifthenelse{\boolean{@twoside}}{\myclearpage}{}
\listoffigures

\ifthenelse{\boolean{@twoside}}{\myclearpage}{}
\listoftables


\renewcommand{\thefootnote}{\arabic{footnote}}
\setcounter{footnote}{0}

\ifthenelse{\boolean{@twoside}}{\myclearpage}{}
\chapter{Introduction}\label{Ch:Introduction}

Advanced Micro Devices (AMD), founded in 1969 as a Silicon Valley start-up, leads high-performance and adaptive computing, powering products and services that help solve the world’s most important challenges. AMD technologies advance the future of data centers, embedded systems, gaming, and PC markets.

Among these challenges, the optimization of industrial systems, particularly within the energy sector, represents a critical application for high-performance computing. Recognizing an opportunity to establish a strong foothold in this domain, AMD is promoting its Graphics Processing Unit (GPU) accelerators and open-source ROCm software platform, shown in Figure~\ref{fig:rocm_stack}, as a powerful alternative to NVIDIA's CUDA ecosystem. A key part of this strategy involves demonstrating superior performance on foundational computational tasks central to these industries, such as linear programming, before customers become locked into a single software library. 

\begin{figure}[h!]
    \centering
    \includegraphics[width=0.8\textwidth]{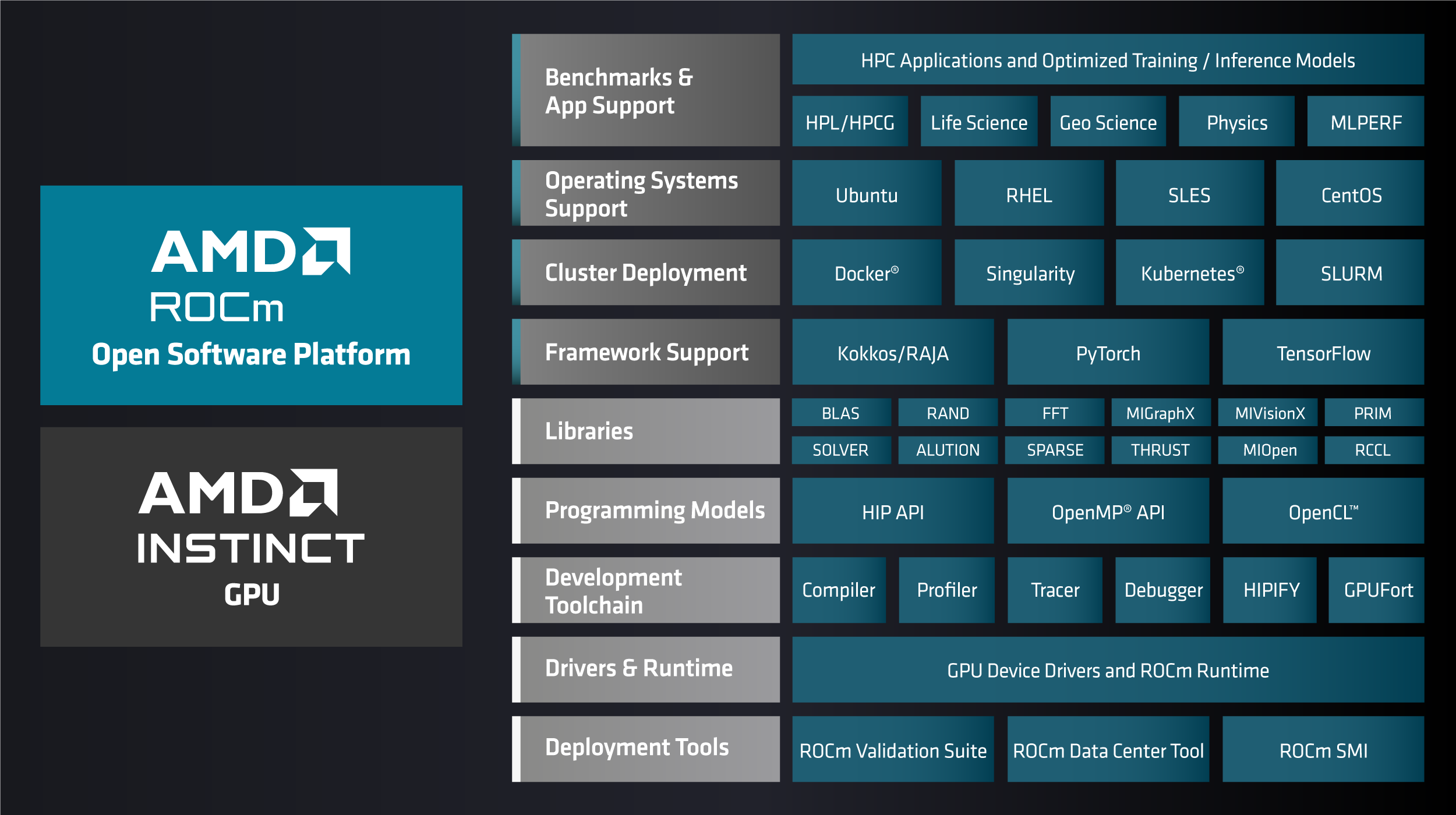}
    \caption{The layered architecture of the AMD ROCm open software platform}
    \label{fig:rocm_stack}
\end{figure}

Linear Programming (LP), the task of optimizing a linear objective function subject to linear inequality constraints, is a cornerstone of mathematical programming with applications spanning finance, energy trading, transportation, and supply chain logistics. Mature CPU-based solvers, such as the Simplex method~\cite{dantzig1948programming} and Interior Point Methods (IPMs)~\cite{dikin1967iterative}, are well established. However, their standard computer implementations rely heavily on matrix factorizations ~\cite{bartels1969simplex,doi:10.1137/S1052623496304712}, which form a computational bottleneck for modern, high-dimensional problems.

Factorization-based approaches present two key challenges for large-scale instances. First, the factorization of a sparse matrix often yields a much denser representation, substantially increasing memory usage and frequently causing out-of-memory errors, even when the original problem data fits comfortably in memory. Second, matrix factorization is inherently sequential, making it difficult to exploit massive parallelism and thus limiting performance gains on modern architectures such as GPUs or distributed systems.

First-order methods, particularly the Primal-Dual Hybrid Gradient (PDHG) algorithm \cite{chambolle2011first}, offer a compelling alternative. By replacing costly factorizations with operations dominated by matrix-vector multiplications, vector updates, and simple projections, PDHG aligns naturally with the highly parallel execution model of GPUs. Recent work by Lu and Yang~\cite{lu2023cupdlp} on cuPDLP.jl, a GPU implementation of restarted PDHG for LP in Julia, has shown that PDHG variants can outperform traditional CPU solvers on some large-scale instances. However, most existing GPU implementations are developed for the NVIDIA CUDA ecosystem, leaving an important gap for solutions optimized for the increasingly competitive AMD GPU hardware.

The objective of this project is to address this gap by developing a high-performance, open-source PDHG solver tailored for AMD GPUs, implemented in PyTorch with ROCm compatibility. The solver is evaluated on both standard LP benchmarks and challenging real-world applications such as Security-Constrained Economic Dispatch (SCED), with the aim of delivering substantial speedups for industrial-scale optimization on open hardware platforms.

\section{Related Work}
Building on the challenges identified in the previous section, our approach begins with a review of the algorithmic landscape for solving LP problems. Over the past decades, two families of algorithms, vertex-based methods such as the simplex algorithm, and interior-point methods, have dominated the field and form the basis of most industrial solvers. Understanding their computational structure, strengths, and limitations is essential for motivating the shift toward first-order methods that are more amenable to parallel hardware.  

\paragraph{Simplex Method.}  
Introduced by George Dantzig in 1947 ~\cite{dantzig1948programming}, the simplex method is widely recognized as the foundation of optimization as a formal scientific discipline. The algorithm proceeds by traversing the vertices of the feasible region, determined by the system of linear constraints, iteratively selecting adjacent vertices that improve the objective value. Despite its exponential worst-case complexity, the simplex algorithm has been extensively refined over the years. The dual simplex method~\cite{dantzig2016linear} addresses modified problem formulations, while the steepest-edge variant~\cite{forrest1992steepest} employs a pivoting rule that selects the direction yielding the greatest improvement per unit step, often reducing iteration counts by guiding the search more effectively. Together, these and other refinements have ensured that simplex variants remain core components of many modern LP solvers.

\paragraph{Interior-Point Methods (IPMs).}  
Interior-point methods arose as a significant alternative to vertex-following techniques such as the simplex method. The conceptual foundations can be traced to Fiacco and McCormick’s work on barrier methods for nonlinear programming~\cite{fiacco1964computational} and Dikin’s iterative interior-point scheme~\cite{dikin1967iterative}. Early formulations, however, gained limited traction due to issues of numerical instability and the considerable computational demands of handling large-scale problems.  
The field advanced with Karmarkar’s introduction of the projective scaling algorithm~\cite{karmarkar1984new}, a polynomial-time method that keeps iterates strictly inside the feasible region. This breakthrough stimulated the development of more effective variants, most notably primal–dual IPMs~\cite{wright1997primal}, which are widely valued for their robust convergence and ability to solve primal and dual problems in tandem.  With their combination of polynomial-time complexity and strong practical performance, IPMs have become a central component of modern optimization software.

\paragraph{Primal–Dual Hybrid Gradient (PDHG) Method.} The Primal–Dual Hybrid Gradient (PDHG) method, originally introduced by Chambolle and Pock in 2011~\cite{chambolle2011first}, is a first-order algorithm for solving saddle-point problems. The term `first order' means that it uses gradient information. The method alternates between gradient-based updates on the primal and dual variables. It applies simple proximal mappings, thereby avoiding costly matrix factorizations that dominate the runtime of simplex and interior-point methods. This structure makes PDHG inherently well-suited for large-scale problems and for execution on massively parallel hardware such as GPUs. In its original form, PDHG is designed for a broad class of convex optimization problems, including image reconstruction and variational imaging tasks. In this work, we adapt the algorithm for general linear programming problems by expressing the LP in a saddle-point form and designing update rules tailored to the polyhedral constraint structure. \\


Each of the three algorithmic paradigms presents a distinct profile of computational trade-offs. While higher-order methods converge in a relatively small number of steps, each iteration involves complex and expensive linear algebra. Conversely, first-order methods are defined by a very large number of computationally inexpensive iterations. These fundamental differences in iteration cost, memory footprint, and amenability to parallel execution are summarized in Table \ref{tab:methods}.
\newcommand{\good}{\textcolor{green}{\large$\bullet$}}
\newcommand{\ok}{\textcolor{gray}{\large$\bullet$}}
\newcommand{\bad}{\textcolor{red}{\large$\bullet$}}
\renewcommand{\arraystretch}{1.4}
\begin{table}[h]
\centering
\renewcommand{\arraystretch}{1.4}

\small
\good~\textsc{Favorable} \quad
\bad~\textsc{Unfavorable} \quad
\ok~\textsc{Neutral} \\
\vspace{0.5em}

\begin{tabular}{>{\raggedright\arraybackslash}p{5cm}|
                >{\centering\arraybackslash}m{2cm}|
                >{\centering\arraybackslash}m{2cm}|
                >{\centering\arraybackslash}m{2cm}}
\hline
 & \textbf{Simplex} & \textbf{IPM} & \textbf{PDHG} \\
\hline
Iteration expense & \good & \bad & \good \\
Steps to reach solution & \bad & \good & \ok \\
Ease of coding & \bad & \ok & \good \\
Parallel scaling potential & \bad & \bad & \good \\
Memory demand & \ok & \bad & \good \\
\hline
\end{tabular}

\caption{Qualitative assessment of \textit{Simplex}, \textit{Interior-Point Method} (IPM), and \textit{Primal-Dual Hybrid Gradient} (PDHG) across key performance aspects.}
\label{tab:methods}
\end{table}
\vspace{-0.75cm}
\paragraph{cuPDLP.jl}
Our design is also informed by the recent work of Lu and Yang~\cite{lu2023cupdlp}, whose introduction marked a significant milestone. Their Julia implementation of a restarted PDHG method was among the first to demonstrate that a GPU-based first-order algorithm could achieve performance competitive with commercial solvers on certain classes of large-scale linear programs. Key to its success are several notable enhancements, including adaptive parameter tuning to dynamically balance step sizes and restart strategies to accelerate convergence when progress stalls. Recognizing that the performance of the Julia implementation could be constrained by language overhead and hardware limitations, a subsequent effort focused on re-implementing and optimizing the algorithm in C~\cite{lu2023cupdlp-c}. This second implementation was designed to maximize computational efficiency on state-of-the-art hardware and provide a more definitive performance benchmark. Drawing inspiration from the principles validated by both of these works, we adopt similar strategies, modifying them where necessary to align with our PyTorch implementation and the specific characteristics of our target problems.

\section{Our approach: torchPDLP}\label{sec: torchPDLP}
For decades, the Central Processing Unit (CPU) has been the primary computational engine for LP solvers. As a general-purpose processor optimized for sequential instruction execution, a CPU typically consists of a small number of cores paired with sophisticated control logic. This architecture excels at tasks with complex dependencies, where low latency and fine-grained control are critical. Simplex methods, for example, exhibit inherently sequential behavior, with each pivot step dependent on the results of the previous one. Likewise, interior-point methods repeatedly solve large linear systems, a process that allows only limited parallelism. These characteristics have led to highly tuned CPU-based implementations that set a strong performance benchmark for alternative platforms.


In recent years, however, high-performance computing has undergone a major shift with the GPUs as a general-purpose compute resource. Originally designed for accelerating graphics rendering, GPUs have evolved into highly parallel, high-throughput architectures capable of executing thousands of lightweight threads simultaneously. This transformation has been driven in large part by the explosive growth of machine learning, where GPUs have become indispensable for training and inference in large-scale neural networks. This hardware paradigm offers a compelling opportunity to rethink LP solver design, particularly for first-order methods such as PDHG, which naturally align with GPU-friendly computation patterns.



A key distinction of our work is the hardware and software ecosystem. Whereas most prior GPU-based PDHG implementations are developed for the NVIDIA CUDA platform with custom kernels, our solver is implemented in Python using the PyTorch deep learning framework with ROCm compatibility. This choice yields several advantages: 
\begin{itemize}
    \item \textbf{Portability} — the same code can execute on CPUs, AMD GPUs, and NVIDIA GPUs without modification, enabling broader adoption across heterogeneous computing environments;  
    \item \textbf{Ecosystem integration} — by leveraging PyTorch’s mature tensor operations, automatic differentiation, and memory management, we reduce development complexity while benefiting from ongoing optimizations in the machine learning community;  
    \item \textbf{Accessibility} — PyTorch’s widespread use in AI and ML research lowers the barrier for practitioners and researchers in these fields to adapt our solver for their applications.
\end{itemize}

By combining PDHG’s algorithmic suitability for parallel architectures with PyTorch’s hardware abstraction capabilities, our implementation provides an open-source, high-performance LP solver optimized for AMD GPUs but readily deployable across multiple platforms. This positions our work as a CUDA-free alternative for large-scale optimization, offering a viable path forward for organizations seeking to diversify their GPU hardware ecosystems.

\section{Report overview}
The remainder of this report is organized as follows.  

Chapter~\ref{chap:math_background} presents the mathematical foundations relevant to our work. Beginning with a concise introduction to linear programming, we describe the PDHG algorithm in the context of linear programming. The chapter includes a discussion of the algorithmic enhancements incorporated into our solver, which are designed to improve convergence speed and robustness in large-scale settings.

Chapter~\ref{chap:comp_background} details the experimental setup and computational environment used in this work. We describe the hardware platforms on which our benchmarks were conducted and illustrate the computational architectures relevant to our implementation. The chapter also introduces the benchmark datasets, comprising standard benchmark sets and a real-world problem instance from Gridmatic, an energy trading company, and outlines our approach to ensuring code availability and reproducibility.

Chapter~\ref{chap:results} details our experimental evaluation. We report performance benchmarks comparing our PyTorch-based PDHG solver on AMD GPUs against CPU-based baselines across standard LP test sets such as Netlib. We also assess robustness relative to other solvers, explore scaling behavior on larger problem instances, and discuss the trade-offs between problem size and GPU overhead.  

Chapter~\ref{chap:conclusion} concludes the report and outlines directions for future work. Potential avenues include mixed-precision computation, further parallel algorithmic refinements, integration with ROCm-specific kernels, comparisons with NVIDIA’s cuOpt framework, and the incorporation of insights from recent research in large-scale optimization.  

Finally, the appendix provides a detailed derivation of the PDHG formulation for LP starting from the general primal–dual framework.

\ifthenelse{\boolean{@twoside}}{\myclearpage}{}
\chapter{Mathematical Formulation}\label{chap:math_background}

The Primal-Dual Hybrid Gradient (PDHG) method is an algorithm for solving general convex optimization problems~\cite{chambolle2011first}. The core of the method involves reformulating the optimization problem into an equivalent saddle-point problem~\cite{boyd}. The algorithm then performs alternating gradient-based steps on the primal and dual variables, an iterative process designed to converge to a saddle point of the associated Lagrangian function. 

In this chapter, we begin by presenting the specific mathematical formulation of the LP problem, followed by a derivation of the PDHG algorithm tailored to this context. For a more rigorous treatment, the detailed transformation from the general PDHG algorithm to this specialized version, along with a formal proof of its convergence, is provided in Appendix~\ref{appendixC}. Subsequently, we review key adaptations from the convex optimization literature that enhance the practical performance and robustness of PDHG as an LP solver. Finally, we introduce a novel heuristic we have developed, termed the `fishnet' method, which employs an adaptive strategy to refine the solution search space.


\section{Notation}
Let \(\mathbb{R}^+\) denote the set of nonnegative real numbers and \(\mathbb{R}^-\) denote the set of nonpositive real numbers. Let \(\| \cdot \|_p\) denote the \(\ell_p\) norm for a vector, and \(\| \cdot \|_2\) denote the spectral norm of a matrix. For a vector \(v \in \mathbb{R}^n\), we use \(v^+,v^-\) for their positive and negative parts, i.e., \(v^+_i = \max\{0,v_i\}, v^-_i = \min\{0,v_i\}, \  v = v^++v^-\). The symbol \(v_{1:m}\) denotes the first \(m\) components of vector \(v\). The symbol \(K_{i,\cdot}\) and \(K_{\cdot,j} \) corresponding to the \(i^\text{th}\) column and $j^\text{th}$ row of the matrix \(K\), respectively. The symbol \(\oplus\) denotes the Cartesian product, e.g., \(\mathbb{R}\oplus\mathbb{R} = \mathbb{R}^2\).




\section{Linear Programming}\label{sec:lp}
Any \textbf{Linear Programming} (LP) problem can be written in the form:
\[
\begin{aligned}
    \min_{x \in \mathbb{R}^n} \quad & c^\top x \\
    \text{subject to:} \quad & Gx \ge h \\
    & Ax = b \\
    & l \le x \le u
\end{aligned}
\]
where 
\[
\begin{aligned}
&x \in \mathbb{R}^n, \quad 
G \in \mathbb{R}^{m_1 \times n} \ \text{(inequality constraints matrix)}, \quad
A \in \mathbb{R}^{m_2 \times n} \ \text{(equality constraints matrix)}, \\
&c \in \mathbb{R}^n \ \text{(cost vector)}, \quad
h \in \mathbb{R}^{m_1}, \quad b \in \mathbb{R}^{m_2}, \\
&l \in \bigoplus_{k=1}^{n} \left\{ \mathbb{R} \cup \{-\infty\} \right\}, \quad
u \in \bigoplus_{k=1}^{n} \left\{ \mathbb{R} \cup \{\infty\} \right\} \text{, (which specify the lower and upper bounds of $x$).}\
\end{aligned}
\]
We allow $l_i = -\infty$ and $u_j = \infty$ to indicate an unbounded variable. This formulation is referred to as the \textbf{primal problem}.

The corresponding \textbf{dual problem}, derived via Lagrangian duality (see  Appendix~\ref{App:DualDerivation} for details), is:
\[
\begin{aligned}
    \max_{y \in \mathbb{R}^{m_1+m_2}, \ \lambda \in \mathbb{R}^n} \quad & q^\top y + l^\top \lambda^+ + u^\top \lambda^- \\
    \text{subject to:} \quad & \lambda = c - K^\top y, \\
    & y_{1:m_1} \ge 0, \\
    & \lambda \in \Lambda,
\end{aligned}
\]
where $ K=\begin{pmatrix}
    G\\A
\end{pmatrix},  \Lambda = \bigoplus_{i=1}^{n} \Lambda_i$, with
\[
\Lambda_i =
\begin{cases}
\{0\}, & \text{if } l_i = -\infty \ \text{and} \ u_i = \infty, \\[0.3em]
\mathbb{R}^-, & \text{if } l_i = -\infty \ \text{and} \ u_i \in \mathbb{R}, \\[0.3em]
\mathbb{R}^+, & \text{if } l_i \in \mathbb{R} \ \text{and} \ u_i = \infty, \\[0.3em]
\mathbb{R}, & \text{otherwise}.
\end{cases}
\]

\section{Primal-Dual Hybrid Gradient Algorithm}\label{sec:initial_solver}


The general PDHG algorithm uses gradient information to solve general primal-dual convex optimization problems and when applied to the general LP problem outlined in the section above takes the form of Algorithm \ref{alg:PDHG_alg}; where $\tau,\sigma >0$ satisfy $\tau\sigma \|K\|_2^2 <1$, $\theta \in [0,1]$, $X:=\{x \in \RR^n \; :l \leq x \leq u\}$, and $Y:=\{y\in \RR^{m_1+m_2} \; : y_{1:m_1} \geq 0\}$.
We provide a proof that this algorithm will converge to the optimal primal-dual solution in Appendix~\ref{appendixD}.

\begin{algorithm}
\begin{algorithmic}[1]
\caption{PDHG Algorithm for Linear Programming \cite{applegate2}}
\label{alg:PDHG_alg}
    \Require $(x^0,y^0) \in \mathbb{R}^n \times \mathbb{R}^{m_1+m_2}, \bar{x}^0 \leftarrow x^0, t \leftarrow 0$
    \Repeat 
    \State $x^{t+1} \leftarrow \proj_{X}(x^n-\tau(c+ K^\top y^n))$
    \State $\bar{x}^{n+1} \leftarrow (x^{n+1} + \theta(x^{n+1}- x^n))$
    \State $y^{t+1} \leftarrow \proj_{Y}(y^n+\sigma(q +  K \bar{x}^{n+1})$
    \State $t \leftarrow t + 1$
    \Until the termination criteria hold
    
\end{algorithmic}
\end{algorithm}

\subsection{Re-parameterizing the Step Sizes}
We have thus far introduced the PDHG algorithm and proven its convergence. However, there are still a number of details specific to the algorithm needed to analyze its behavior.

The primal and dual step sizes, $\tau$ and $\sigma$, are re-parameterized to have better control over their sizes \cite{applegate2}. We let $\eta,\omega >0$ and set \[ \tau=\eta/\omega \quad \text{ and } \quad \sigma=\eta\omega.\]
We call $\eta$ the \textit{step size} since it controls the magnitude of the dual and primal sizes, and we call $\omega$ the \textit{primal weight} since it controls the ratio of the primal to the dual step size. From this, it is clear to see that the convergence requirement $\tau\sigma\|K\|_2^2 <1$ is met if and only if $\eta < 1/\|K\|_2$, but $\omega$ is free to be any positive number.

When setting the step size, it is useful to be able to evaluate $\|K\|_2$. An exact computation of the spectral norm of a matrix requires singular value decomposition. However, for larger matrices, this becomes far too computationally expensive, so we employ the power iteration algorithm \cite{golub2013matrix} to estimate the spectral norm. This technique is outlined in algorithm \ref{alg:power_iter} and has the benefit of only using matrix-vector multiplications and norm calculations, which are greatly accelerated with GPUs.
\begin{algorithm}[H]
\begin{algorithmic}[1]
\caption{SpectralNormEstimate($K, N_{iter}$)\cite{golub2013matrix}}
\label{alg:power_iter}
    \State $b \leftarrow [1 \; \cdots \; 1]^\top$
    \For {$i=1,2,\ldots, N_{iter}$}
    \State $b \leftarrow K^\top(Kb)$
    \State $b \leftarrow b/\|b\|_2$
    \EndFor
    \State \Return $\|Kb\|_2$
    
\end{algorithmic}
\end{algorithm}
The primal weight $\omega$ is also used to define the following norm of both the primal and dual variables,
\begin{equation}\|(x,y)\|_\omega :=\sqrt{\omega\|x\|_2^2 + \frac{\|y\|_2^2}{\omega}}.\end{equation}
This equation, called the `omega norm', will become useful in section \ref{math_step_size}.
\subsection{Termination Criteria}\label{sec:term}
To determine whether to terminate our algorithm, having found a solution, we employ termination criteria that takes the `relative KKT error', first derivative checks applicable to many types of mathematical optimization. Setting $\epsilon$ to be our tolerance ($10^{-4}$ throughout this report), our algorithm terminates when the following conditions are met:
\begin{equation}\label{term1}
|q^{\top}y + l^{\top}\lambda^{+}+u^{\top}\lambda^{-}-c^{\top}x| \leq \epsilon (1 + |q^{\top}y + l^{\top}\lambda^{+}+u^{\top}\lambda^{-}| + |c^\top x|)
\end{equation}
\begin{equation}\label{term2}\left\|\begin{pmatrix}
    Ax-b \\ [h-Gx]^+
\end{pmatrix}\right\|_2 \leq \epsilon (1 + ||q||_2)\end{equation}
\begin{equation}\label{term3}||c-K^\top y -\lambda||_2 \leq \epsilon (1+||c||_2)\end{equation}
where $\lambda$ is calculated $\lambda=\proj_\Lambda(c-K^\top y)$ in accordance with the statement of the dual LP in section \ref{sec:lp}. The top condition represents the duality gap between the primal and dual objective values, and the second and third represent primal and dual feasibility conditions. The left-hand sides of these inequalities are 0 if and only if $(x,y)$ is an optimal primal-dual solution to the LP.

For instances where preconditioning is used (see section \ref{precond-section}), termination criteria are checked for the \textit{original} tensors, not the preconditioned tensors. Because these matrix-vector multiplications and norms are not cheap, we do not evaluate termination criteria every iteration, but after a set number of iterations, and after every restart (section \ref{sec:restart}).
\section{Previous PDHG Adaptations}\label{sec:pdlp}
The above analysis defines a general problem of convex optimization, and defines linear programming as a specific example of this class of problems. Then, the PDHG algorithm is introduced, and convergence is proved. However, the PDHG algorithm is a \textit{general solver} for convex-optimization saddle point problems. To develop a robust solver specifically tailored for \textit{linear programs}, Applegate et al. introduce a series of enhancements \cite{applegate2} that render PDHG more effective for LP problems.

These enhancements include: step size modification, adaptive restarting, primal weight updates, presolving, and diagonal preconditioning. As of the writing of this report, we have successfully implemented diagonal preconditioning, primal weight updating, adaptive step size, and adaptive restarting. We explain each in detail in the following subsections.

\subsection{Diagonal Preconditioning and Presolve}\label{precond-section}
Diagonal Preconditioning and Presolve are both steps done before running the algorithm. We employ diagonal preconditioning to improve the numerical stability and convergence rate of the algorithm. This technique rescales the problem by transforming the constraint matrix $K$ into a `well-balanced’ matrix $\tilde{K}$. The transformation is defined by two positive diagonal matrices, a row-scaling matrix $D_r$ and a column-scaling matrix $D_c$, such that:

\[
\tilde{K} = D_r^{-1} K D_c^{-1}
\]

This rescaling of the constraint matrix induces a corresponding change of variables for the primal solution vector $x$, the objective vector $c$, and the right-hand side vector $q$:
\begin{align*}
\tilde{x} &= D_cx \\
\tilde{c} &= D_c^{-1} c \\
\tilde{q} &= D_r^{-1} q
\end{align*}
We are using the Ruiz scaling algorithm \cite{ruiz2001scaling}, detailed in algorithm \ref{alg:ruiz_scaling}.


Preconditioning our matrix reduces our condition number of the matrix, ensuring our PDHG has better-behaved convergence behavior. Geometrically, this ensures the algorithm's iterates stay within a smaller neighborhood, as the singular values vary less dramatically.

Presolve, on the other hand, is a standard technique in LP that involves removing bounds where the upper and lower bounds are the same, removing empty rows or columns, detecting inconsistent bounds, utilizing equality constraints to remove redundant variables, and a number of other techniques to make solving LP more efficient. After solving a presolved LP, the minimizer solution must be `post-solved' to turn that into a minimizer solution to the original LP. Writing our own presolver was beyond the scope of this project, so we use the state-of-the-art open source application PaPILO \cite{papilo}.
\begin{algorithm}[htpb]
\caption{Iterative Ruiz Scaling Algorithm \cite{ruiz2001scaling}}
\label{alg:ruiz_scaling}
\begin{algorithmic}[1]
    \Require Constraint matrix $K \in \mathbb{R}^{m \times n}$, max number of iterations $N_{\text{iter}}$.
    \Ensure The final scaled matrix $\tilde{K}$.
    
    \State Let $\tilde{K} = K$.
    \For{$k = 1, \dots, N_{\text{iter}}$}
        \State Define diagonal scaling matrices $D_r$ and $D_c$:
        \begin{align*}
            (D_r)_{jj} &\leftarrow 1 / \sqrt{\|\tilde{K}_{j,:} \|_{\infty}} \quad \text{for } j=1, \dots, m \\
            (D_c)_{ii} &\leftarrow 1 / \sqrt{\|\tilde{K}_{:,i} \|_{\infty}} \quad \text{for } i=1, \dots, n
        \end{align*}
        
        \State Update the constraint matrix: $\tilde{K} \leftarrow D_r \tilde{K} D_c$.
        
        \State \If{\text{all row and column infinity norms have converged to 1 within tolerance}}
            \State \textbf{break} 
        \EndIf
    \EndFor
    \State \Return $\tilde{K}$
\end{algorithmic}
\end{algorithm}

\subsection{Adaptive Restarts}\label{sec:restart}
A key adaptation of torchPDLP (see section \ref{sec: torchPDLP}) is periodically restarting PDHG. This averages points along the convergence spiral and restarts the algorithm at the midpoint of these chosen points. To simplify notation in this section, we combine the primal solution x and the dual solution y into a single variable, denoted as $z=(x,y)$. The vector $z_n^t$ denotes the $t^{\text{th}}$ iterate of the $n^\text{th}$ restart.

The metric we use to choose when to restart and with which candidate is the KKT error, defined by
\begin{equation}\KKT (z):=\sqrt{\omega^2 \left\|\begin{pmatrix} Ax-b \\ [h-Gx]^+ \end{pmatrix}\right\|_2^2 +\frac{1}{\omega^2}\|c-K^\top y -\lambda\|_2^2 + (q^\top y +l^\top \lambda^+ +u^\top \lambda^- -c^\top x)^2}\end{equation}
which distills all three termination criteria discussed in section \ref{sec:term} into one value.

First, we get a candidate for restarting as follows: \[
\hat{z}_n^{t+1} = \text{GetRestartCandidate}(z_n^{t+1}, \bar{z}_n^{t+1}) :=
    \begin{cases}
        z_n^{t+1}, & \text{if } \KKT(z_n^{t+1}) < \KKT(\bar{z}_n^{t+1}) \\
        \bar{z}_n^{t+1}, & \text{otherwise}
    \end{cases}
\]
where \[\bar{z}_n^{t+1} = \frac{1}{\sum_{i=1}^{t+1} \eta_n^i} \sum_{i=1}^{t+1} \eta_n^i z_n^i\] is a weighted average of iterates with their step sizes (if employing adaptive step size, see section \ref{math_step_size}).

We restart our algorithm at this candidate whenever any of the following criteria are satisfied:
\begin{enumerate}
    \item \textbf{Sufficient Decay}: \(\KKT(\hat{z}_n^{t+1}) \leq \beta_{\text{sufficient}} \KKT(z_n^0)\),
    \item \textbf{Necessary Decay + No Progress}: \(\KKT(\hat{z}_n^{t+1}) \leq \beta_{\text{necessary}} \KKT(z_n^0)\) 
          and \(\KKT(\hat{z}_n^{t+1}) > \KKT(\hat{z}_n^t)\),
    \item \textbf{Long Inner Loop}: the number of iterations this restart is more than \(\beta_{\text{artificial}}\) of the total iterations,
\end{enumerate}
where \(0 < \beta_{\text{sufficient}} < \beta_{\text{necessary}} < 1\) and \(0 < \beta_{\text{artificial}} < 1\) are user-defined constants. The justification for these details regarding restarting is found in \cite{Applegate_2022} and in torchPDLP we use $\beta_{\text{sufficient}}=0.2$, $\beta_{\text{necessary}}=0.8$, and $\beta_{\text{artificial}}=0.36$. 

\subsection{Primal Weight Updates}\label{sec:primal_weight}
Primal weight updating is a method of dynamically adjusting our primal weight $\omega$ over time, to `balance the distances between optimality' in the primal and dual. For example, if the primal guess $x^t$ at time $t$ given by the algorithm is `close' (by Euclidean-distance) to the primal solution $x^*$, but the dual guess $y^t$ at time $t$ is much further, this can cause problems: intuitively, we want to `nudge' the primal to its solution and return the solution quickly. Instead, however, the dual solution at $y^{t}$ is far away, meaning that when we update $x^t \mapsto x^{t+1}$, our step is larger than it needs to be because its gradient step is a function of $y^t$.

First, then, we initialize our primal weight as follows:
\[
\text{InitializePrimalWeight}(c, q) := \begin{cases}
{\frac{\|c\|_2}{\|q\|_2}} & \text{if } \|c\|_2, \|q\|_2 > \epsilon_{\text{zero}} \\
1 & \text{otherwise}
\end{cases}
\] where $c,q$ are the primal and dual objective vectors respectively and $\epsilon_{\text{zero}}$ is some arbitrarily small constant. 

This initialization guesses that the primal and dual step sizes scale with the norm of their objective vectors. Then, we periodically update the primal weight at step $n$ to scale with the primal and dual iterates' respective distance to the optimum. Of course, we do not know this distance exactly because we do not know the LP's optimal solution - instead, we estimate it with $\Delta_n^y/\Delta_n^x$ where $\Delta_n^y := ||y^0_n-y^0_{n-1}||_2$ and $\Delta_n^x := ||x^0_n-x^0_{n-1}||_2$. Thus, our primal weight update algorithm is as follows

\begin{algorithm}[H]
\begin{algorithmic}[1]
\caption{Primal Weight Update($x_n^0, x_{n-1}^0, y_n^0, y_{n-1}^0\omega_{n-1}$)\cite{Applegate_2022}:}
\label{alg:primal_weight}
        \State $\Delta_n^x \leftarrow \|x_n^0 - x_{n-1}^0\|_2$
        \State $\Delta_n^y \leftarrow \|y_n^0 - y_{n-1}^0\|_2$
        \If{$\Delta_n^x > \epsilon_{zero} \;\text{ and } \; \Delta_n^y > \epsilon_{zero}$}
        \State \Return $\exp\Big(0.5 \log \left (\Delta_n^y/\Delta_n^y \right) + 0.5 \log (\omega_{n-1}) \Big)$
        \Else
        \State \Return $\omega_{n-1}$
        \EndIf
        
\end{algorithmic}
\end{algorithm}


\subsection{Adaptive Step Sizes}\label{math_step_size}
Adaptive step size is a technique that involves periodically adjusting the step size to ensure that, on the one hand, the step size ensures convergence, yet is large enough to speed up our convergence. Recall that convergence is guaranteed when $\eta \leq \frac{1}{||K||_2}$. This, it turns out, is an `overly pessimistic estimation' of $\eta$ \cite{applegate2}. Thus, the following algorithm ensures convergence guarantees while dynamically adjusting the step size such that :
\begin{equation}
\eta \leq \frac{\|z^{k+1}-z^{k}\|^2_{\omega}}{2|(y^{k+1}-y^k)^\top K(x^{k+1}-x^k)|}
\end{equation}
is ensured. We suspect that there are less computationally-intensive ways to not be over-pessimistic with step size adjustment while ensuring convergence that have yet to be found.
\begin{algorithm}[H]
\begin{algorithmic}[1]
\caption{AdaptiveStepOfPDHG ($z_{n,t}, \omega_n, \hat{\eta}_{n,t}, k$)\cite{applegate2}:}
\label{alg:adaptive_step}
        \State $(x, y) \leftarrow z_{n,t}$
        \State $\eta \leftarrow \hat{\eta}_{n,t}$
        \While{true}
            \State $x' \leftarrow \proj_{X}\left( x - \frac{\eta}{\omega_n}(c - K^\top y) \right)$
            \State $y' \leftarrow \proj_{Y}\left( y + \eta \omega_n (q - K (2x' - x)) \right)$
            \State $\bar{\eta} \leftarrow \frac{\|(x' - x, y' - y)\|_{\omega_n}^2}{\lvert2(y' - y)^\top K (x' - x)\rvert}$
            \State $\eta' \leftarrow \min\left\{ (1 - (k + 1)^{-0.3})\bar{\eta},\; (1 + (k + 1)^{-0.6})\eta \right\}$
            \If{$\eta \leq \bar{\eta}$}
                \State \Return $(x', y'), \eta, \eta'$
            \EndIf
            \State $\eta \leftarrow \eta'$
        \EndWhile
\end{algorithmic}
\end{algorithm}

\subsection{Infeasibility Detection}\label{sec:infeas_detect}
Infeasibility detection is a method for determining, during the execution of PDHG, whether the primal or dual problem has no feasible solution. Rather than relying on solving auxiliary problems, we exploit the asymptotic behavior of the PDHG iterates themselves. If a problem is infeasible, the iterates diverge along a well-defined ray whose direction is called the \emph{infimal displacement vector} \(v=(v_x,v_{y_{ineq}},v_{y_{eq}},v_{\lambda})\). The vector encodes \emph{certificates of infeasibility} for the primal and/ or dual problem.

Concretely:
\begin{itemize}
    \item if \(v_{y_{ineq}} \ge0,\) and satisfies
    \[
    G^\top v_{y_{ineq}} + A^\top v_{y_{eq}} - v_{\lambda} = 0
    \]
    and
    \[
    h^\top v_{y_{ineq}} + b^\top v_{y_{eq}} - l^\top v_{\lambda}^- - u^\top v_{\lambda}^+ > 0
    \]
    this certifies \textbf{primal infeasibility}.
    \item for \[V_i=
\begin{cases}
\mathbb{R}, & \text{if } l_i = -\infty \ \text{and} \ u_i = \infty, \\[0.3em]
\mathbb{R}^-, & \text{if } l_i = -\infty \ \text{and} \ u_i \in \mathbb{R}, \\[0.3em]
\mathbb{R}^+, & \text{if } l_i \in \mathbb{R} \ \text{and} \ u_i = \infty, \\[0.3em]
\{0\}, & \text{otherwise},
\end{cases}
\]
if $v_x$ satisfies
\[
Gv_x \ge 0,\quad Av_x=0,\quad c^\top v_x <0,\quad v_{x_i}\in V_i
\]
this certifies \textbf{dual infeasibility}.
\end{itemize}

Our formulation adopts certificates analogous to the standard-form certificates \cite{applegate2021infeasibility}, and in this setting, the components \(v_x\) and \((v_{y_{\mathrm{ineq}}}, v_{y_{\mathrm{eq}}}, v_{\lambda})\) of the infimal displacement vector serve as valid infeasibility certificates.

\paragraph{Detection Sequences.}
As proposed in~\cite{applegate2021infeasibility}, we keep track of the `difference of iterates' which converge to $v$ for any infeasible problem.

\begin{algorithm}[H]
\caption{InfeasibilityDetection($x_k,y_k,\lambda_k,x_{k-1},y_{k-1},\lambda_{k-1},tol$)}
\begin{algorithmic}[1]
\State $\Delta x \gets x_k-x_{k-1},\quad \Delta y \gets y_k-y_{k-1},\quad \Delta\lambda \gets \lambda_k-\lambda_{k-1}$
\State Split $\Delta y=(\Delta y_{\mathrm{ineq}},\,\Delta y_{\mathrm{eq}})$

\State \textbf{Dual-infeasibility test (certificate via $\Delta x$)}
\If{$\|A\Delta x\|_2\le tol$ \textbf{and} $\min(G\Delta x)\ge -tol$ \textbf{and} $c^\top\Delta x\le -tol$ \textbf{and} $\Delta x_i\in V_i\ \forall i$}
    \State \Return \textsc{DUAL\_INFEAS}
\EndIf

\State \textbf{Primal-infeasibility test (certificate via $\Delta y,\Delta\lambda$)}
\If{$\|G^\top\Delta y_{\mathrm{ineq}}+A^\top\Delta y_{\mathrm{eq}}-\Delta\lambda\|_2\le tol$ \textbf{and} $\min(\Delta y_{\mathrm{ineq}})\ge -tol$}
    \State $\psi \gets h^\top\Delta y_{\mathrm{ineq}} + b^\top\Delta y_{\mathrm{eq}} - l^\top(\Delta\lambda)^- - u^\top(\Delta\lambda)^+$
    \If{$\psi \ge -tol$}
        \State \Return \textsc{PRIMAL\_INFEAS}
    \EndIf
\EndIf
\end{algorithmic}
\end{algorithm}

\section{Our Enhancement - Fishnet Casting} \label{sec:Fishnet}
While algorithms like the PDHG can leverage GPU acceleration for core operations like matrix-vector multiplication, their fundamental structure remains sequential. Each iteration, $(x^{k+1}, y^{k+1})$, is strictly dependent on the result of the previous iteration, $(x^k, y^k)$. This inherent iterative dependency limits the parallelization potential within a single solution trajectory. 

To overcome this limitation and better exploit massively parallel architecture, we introduce a novel multi-starting heuristic we call \textbf{Fishnet Casting}. The strategy abandons a single-trajectory approach in favor of computing multiple solution paths simultaneously. Conceptually, the process mirrors casting a wide net and progressively tightening it to isolate a target. It begins by generating a diverse set of initial candidate points and then iteratively refines this set using a process of evaluation, culling, and repopulation loosely inspired by multi-start optimization \cite{MARTI20131} and genetic algorithms \cite{Manning-genetic}. The final surviving candidate is then used as an initial point in the torchPDHG solver.

The framework consists of two main phases: an initial spectral casting phase and an iterative fishnet loop, detailed below and in Algorithm~\ref{alg:fishnet}. In spectral casting, we create a ball $\mathcal{B}_r$ in primal space of radius $r$, sufficiently large that we hope it contains the primal LP's feasible region. The radius $r$ is set using a computationally inexpensive estimate of the spectral norm of the constraint matrix, $r := ||K||_2$. While this choice is a heuristic, it allows the search radius to scale with the problem's characteristics without incurring the high computational cost of more theoretically grounded methods, such as computing Löwner-John ellipsoids~\cite{henk2012lowner}. We then sample $2^p$ points from a normal distribution around $\mathcal{B}_r$, for $p$ some integer parameter. Let these points be $X^0 := \{x_0^0,x_1^0,\cdots,x_{2^p}^0\}$.

The second part of the technique consists of a main loop. Within this loop, we run $k$ iterations of PDHG on a set of points $X^i$. Then, we measure the duality gap for all of these points and `delete' the worst-performing half of these points with respect to the duality gap. Then, if the main loop is an even iteration, we `breed' new points as random convex combinations of the best-performing half. This ensures that we have some measure of randomness, yet our new points are likely not far worse than the remaining half. Even if they are, after the next $k$ PDHG iterations, the worst-performing points will be culled regardless.

\begin{algorithm}[H]
\begin{algorithmic}[1]
\caption{fishnet($\hat{X}^0,K,\eta,\omega,c,q,l,u,k$):} \label{alg:fishnet}
        \State $\hat{Y}^0 \leftarrow K\hat{X}^0$
        \State j $\leftarrow$ columns(K)
        \State $i \leftarrow 0$
        \While{$j > 1$}
            \State $t \leftarrow 0$
            \For{$t < k$}
                \State $\hat{X}^i,\hat{Y}^i \leftarrow \text{PDHG}(\hat{X}^i,\hat{Y}^i,K,\eta,\omega,c,q)$
                \State $t \leftarrow t+1$
                \EndFor
            \State $\hat{X}^{i+1},\hat{Y}^{i+1} \leftarrow \text{cull\_points}(K,\hat{X}^i,\hat{Y}^i,c,q,l,u)$
            \If{$i = 1 \pmod{2} \ \& \ j > 2$}
                \State $\hat{X}^{i+1},\hat{Y}^{i+1} = \text{repopulate}(\hat{X}^{i+1},\hat{Y}^{i+1})$
            \EndIf
            \State $i \leftarrow i+1$, j $\leftarrow$ columns(K)
        \EndWhile
        \State $x^0,y^0 \leftarrow \hat{X}^i,\hat{Y}^i$
        \Return $x^0,y^0$
\end{algorithmic}
\end{algorithm}
The function `columns()' takes as input a matrix and returns the number of columns of the matrix. PDHG(X,Y) runs PDHG with fixed step-size on matrices X,Y, whose columns correspond to primal and dual iterates, respectively. This transforms PDHG from a matrix-vector-dominated algorithm to a matrix-matrix-multiplication-based algorithm. 

`cull\_points()' is a function that takes as input two matrices X and Y, and halves their number of columns, removing columns whose duality gaps are larger and thus worse-performing. Thus, the fraction culled is 1/2, ensuring we reduce column count in an efficient manner. Finally, repopulate(X,Y) takes columns of X and Y, and randomly creates convex combinations of their columns, doubling the column count in the returned matrices. Culling points and repopulating them in this manner ensures we balance reducing the number of points with enough `genetic diversity' in between points, ensuring we get a good starting vector.

\ifthenelse{\boolean{@twoside}}{\myclearpage}{}
\chapter{Experimental Setup and Computational Environment}\label{chap:comp_background}  

In this chapter, we describe the hardware platforms, solver architecture and datasets used in our experiments. We begin with a comparison of CPU and GPU architectures, focusing on the specific hardware employed in our study: an AMD EPYC 7V13 64-core CPU, AMD Instinct MI210 and MI325X GPUs, and an NVIDIA A100 GPU. An illustration of the computational architecture is provided to highlight the parallelism opportunities relevant to our implementation.  We then detail the benchmark datasets used for performance evaluation. These datasets enable us to evaluate both general-purpose solver performance and application-specific optimization in the energy domain. Finally, our solver is implemented in Python using PyTorch with ROCm support, and all source code, scripts, and configuration files are made available in a 
public repository \cite{github} to facilitate replication of results and further research.

\section{Computational Devices and Accelerators}
A core component of our study involves understanding how the architectural differences between CPUs and GPUs influence the performance of linear programming solvers. While both are capable of executing general-purpose computations, CPUs are engineered for low-latency, sequential workloads, excelling at complex control flow and diverse instruction sets. GPUs, by contrast, are optimized for high-throughput, massively parallel workloads, making them well-suited for operations that can be decomposed into many independent, homogeneous tasks.
\begin{table}[h!]
\centering
\begin{tabular}{lll}
\hline
\textbf{Architectural Feature} & \textbf{CPU} & \textbf{GPU} \\
\hline
Core Architecture    & Few, complex cores                & Many, simple cores \\
Execution Model      & Complex, branching tasks          & Parallel data batches \\
Memory Subsystem     & Low-latency memory                & High-bandwidth memory \\
Primary Workload     & Serial        & Massive data parallelism \\
\hline
\end{tabular}
\caption{Key architectural distinctions between CPU and GPU paradigms.}
\label{tab:gpu_cpu_summary}
\end{table}
\subsection{The Central Processing Unit (CPU)}
The CPU is the most common general-purpose processor, often referred to as the `brain' of a computer, responsible for executing the commands and processes essential for the operating system and all user-facing applications. Its architectural design has been honed over decades with a primary of minimizing latency, which is the time required to complete a single, discrete task. This focus on speed for individual operations makes the CPU proficient at handling complex, branching, and unpredictable instruction streams that characterize general-purpose computing.

The philosophy of low-latency optimization is embedded in the CPU's core design. A modern high-performance CPU is constructed with a relatively small number of powerful cores. The AMD EPYC 7V13 CPU employed in our study, for instance, features 64 cores based on the Zen 3 architecture~\cite{amd:epyc7003}. While this number is substantial, it is orders of magnitude smaller than the core counts found in its GPU counterparts. A significant portion of the silicon die area within each CPU core is dedicated not to raw arithmetic computation, but to sophisticated control logic, deep instruction pipelines, advanced branch prediction units, and speculative execution engines. These components work in concert to accelerate a single thread of execution, ensuring that complex, conditional logic is navigated with maximum speed.

These strengths however represent a fundamental design trade-off. The complexity and power consumption of each core make it physically and economically impractical to place thousands of them on a single chip. Complementing this complex core design is an intricate cache hierarchy, such as the 256 MB of L3 cache in the EPYC 7V13, designed for rapid data access by its cores. Consequently, the CPU's architecture, while well suited for its intended purpose of managing complex and varied \textit{sequential} tasks, is inherently limited in its capacity for computational parallelism. 

\subsection{The Graphics Processing Unit (GPU)}
In contrast to the latency-optimized design of the CPU, the GPU represents an architectural paradigm engineered for maximum total volume of computational work completed per unit of time. The GPU's design philosophy prioritizes this aggregate rate of computation over the speed of any single, individual operation. GPUs have evolved from their original purpose (graphics processing) into powerful, general-purpose parallel processors: their architecture is now the cornerstone of modern high performance computing, artificial intelligence, and large-scale scientific simulation, where problems can be decomposed into a vast number of simple, repetitive and \textit{parallelizable} calculations.

Our computational framework utilizes several units, including the NVIDIA A100, the AMD Instinct MI210, and, most centrally, the AMD Instinct MI325X. The NVIDIA A100 (80 GB model), built on the Ampere architecture, features 6,912 CUDA cores~\cite{nvidia:a100}. The AMD Instinct MI210, based on the CDNA2 architecture, provides a comparable array of 104 Compute Units (CUs), which contain 6,656 Stream Processors~\cite{amd:mi210}. The primary processing unit for this project, the AMD Instinct MI325X, represents a significant advancement. Built on the newer CDNA3 architecture, it integrates 304 CUs, housing a total of 19,456 Stream Processors~\cite{amd:mi325x}. This massive number of simpler, specialized cores is the key to the GPU's parallel processing power.


To prevent this multitude of cores from becoming data-starved, GPUs are equipped with a memory subsystem optimized for extremely high bandwidth rather than low latency. The GPUs in our study showcase this design principle. The MI210 is equipped with 64 GB of HBM2e memory delivering up to 1.6 TB/s of bandwidth. The NVIDIA A100 offers a similar capability with 80 GB of HBM2e memory and a peak bandwidth of approximately 2.0 TB/s. The MI325X, however, substantially increases this capability, featuring 256 GB of state-of-the-art HBM3e memory. This is connected via a wide memory bus, yielding a peak theoretical memory bandwidth of up to 6TB/s. This high bandwidth is indispensable for continuously supplying the stream processors with operands and writing back their results, ensuring that the computational resources are fully utilized.
\subsection{Compute Access}
The computational experiments for this project were conducted on high-performance computing resources from several platforms. Our primary benchmarking were performed on the AMD AI \& HPC Research Cluster. Supplementary experimental results were obtained using the Heterogeneous Accelerated Compute Clusters (HACC) at UCLA, specifically leveraging the AMD Instinct MI210 accelerators. Furthermore, we were kindly introduced to the AMD Developer Cloud to ensure research continuity during a period of scheduled infrastructure migration of our primary compute cluster. It offers an ideal platform for developers and open-source contributors building and optimizing AI, machine learning, and HPC workloads on AMD hardware, with access to high-performance AMD Instinct MI300X GPUs.

\subsection{Multithreading}\label{multithread}
The most computationally intensive step in the PDHG algorithm involves the matrix-vector multiplications with the constraint matrix $K$ and its transpose, $K^\top$. To confirm the performance benefits of parallel execution, we conducted a preliminary experiment on a CPU architecture by varying the number of computational threads allocated to our program in PyTorch. 
\begin{figure}[htbp]
    \centering
    \includegraphics[width=0.8\linewidth]{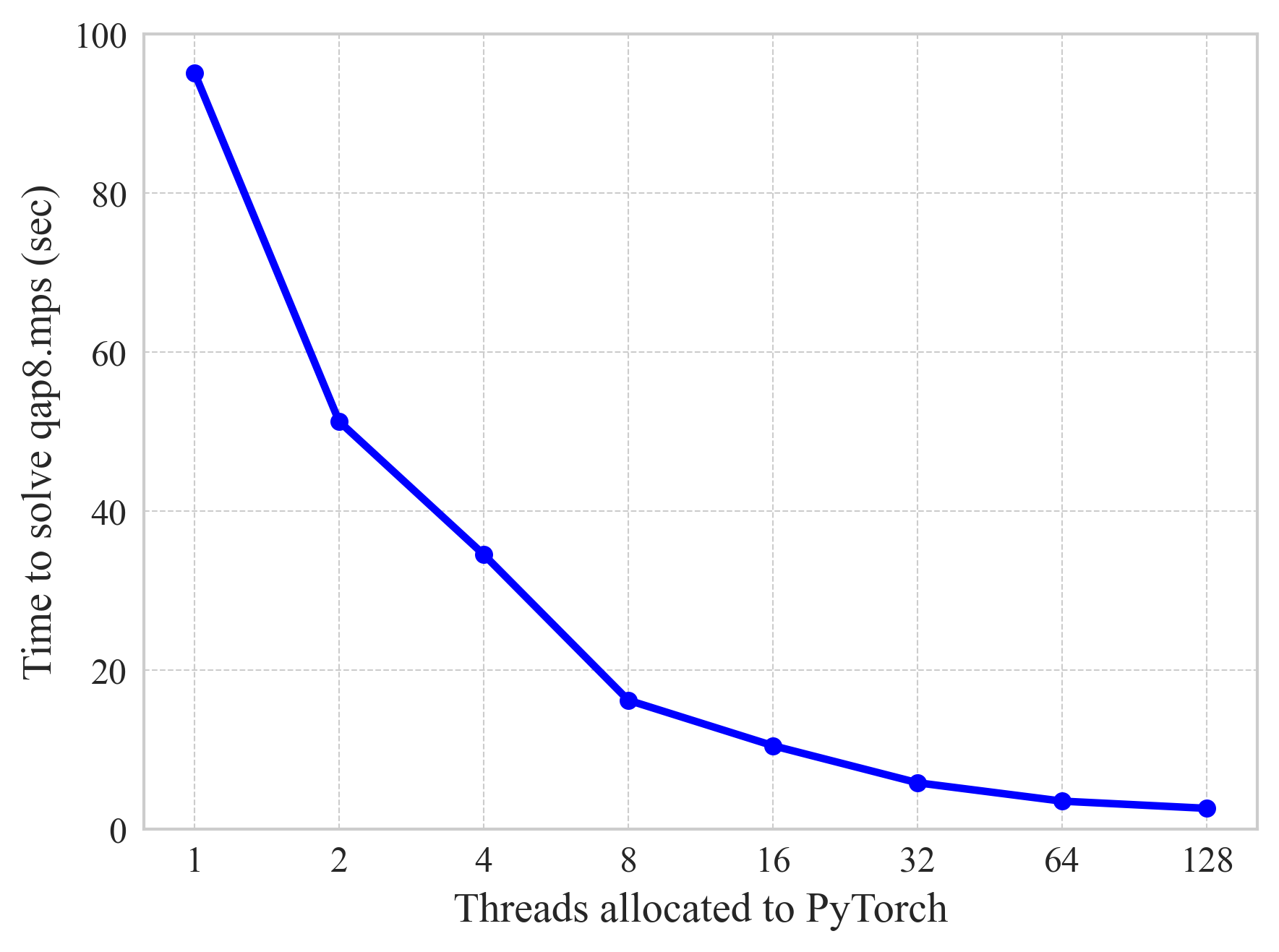}
    \caption{Wall-clock time vs thread count}
    \label{fig:threads}
\end{figure} 

As illustrated in Figure \ref{fig:threads}, increasing the number of threads significantly reduces the computation time, confirming the expected speed-up from multithreading. This successful demonstration provides a strong rationale for leveraging the massively parallel architecture of a GPU to achieve even greater acceleration.
\section{Computational Architecture}
The design of our solver's computational architecture, depicted in Figure~\ref{fig:design}, is fundamentally based on a heterogeneous computing model that strategically allocates tasks to either the CPU or the GPU to exploit their respective strengths. The primary objective of this architecture is to maximize computational throughput by leveraging the GPU's massively parallel processing capabilities for the algorithm's core numerical operations, while simultaneously minimizing the latency overhead associated with data transfers. The workflow follows a hybrid division of labor: the CPU is responsible for inherently sequential tasks such as reading the LP instance from disk, performing any necessary presolving, and handling final output. Once the input data is prepared, it is transferred to the GPU in a single bulk operation.
\begin{figure}[htbp]
        \centering
        \includegraphics[width=0.8\linewidth]{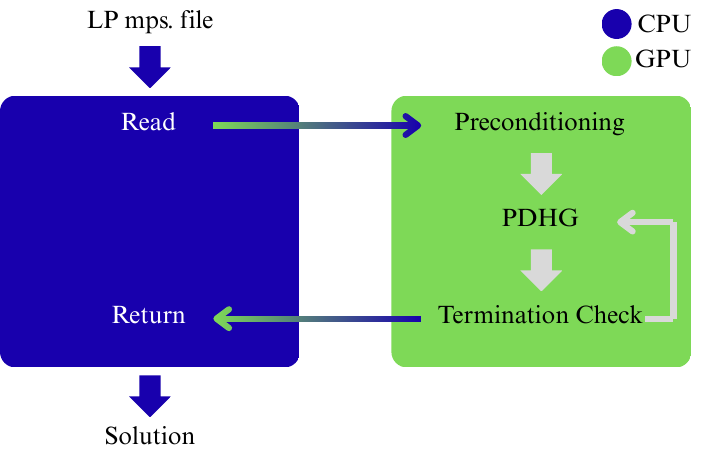}
        \caption{Illustration of the computational architecture of our program}
        \label{fig:design}
\end{figure}

From this point onward, all performance-critical components—including preconditioning to improve numerical conditioning, the restarted PDHG iterations, and termination checks—are executed entirely on the GPU. Meanwhile, the CPU’s role is limited to tasks that are inherently sequential or control-oriented, such as reading the LP instance from an Mathematical Programming System (MPS) file, performing presolve operations, and postprocessing the final output. Importantly, no intermediate results are sent back to the CPU during computation, eliminating transfer-induced latency from the main computational loop. The only CPU–GPU communication after initialization occurs upon algorithm termination, when the final solution is transferred from the GPU back to the CPU for postprocessing and return to the user.

This GPU-resident design leverages the GPU’s massively parallel architecture for the dense matrix–vector operations that dominate PDHG’s runtime, while leaving sequential control and data management to the CPU. By assigning each processor type to the tasks it executes most efficiently, we ensure that computational throughput is maximized and communication overhead is minimized.\\

\section{Benchmark Datasets}
\subsection{Netlib}\label{sec:Netlib}
For general benchmarking and algorithm validation, we utilize problems from the Netlib library \cite{Netlib}, which serves as a standard test set for comparing Linear Programming (LP) software and algorithms. The Netlib collection is a well-known suite of real-world LP problems with 114 feasible and 29 infeasible problems. The Netlib is also notable for its numerical challenges. Ordónez and Freund have shown that a significant percentage of the Netlib instances are ill-conditioned~\cite{ordonez2003computational} , presenting potential difficulties for numerical solvers, and therefore providing a robust challenge to benchmark our LP solver and test the effectiveness of the enhancements.
\subsection{MIPLIB 2017}
To evaluate the performance and scalability of our solver on larger and more structurally complex problems, we used the linear programming relaxations of instances from the MIPLIB 2017 collection~\cite{miplib2017}. Originally created to provide researchers with challenging mixed-integer programming problems, the LP relaxations of this dataset offer a diverse set of large-scale, real-world problems. These instances were specifically chosen to validate our hypothesis that the computational speedup provided by the GPU architecture scales effectively with problem size.

\subsection{Security-Constrained Economic Dispatch (SCED)} \label{sec:SCED}
To assess our solver's performance on a practical, large-scale industrial application, we used a real-world instance of a Security-Constrained Economic Dispatch (SCED) problem. SCED is a critical optimization task in power systems engineering that determines the most cost-effective allocation of electricity generation while satisfying operational and reliability constraints. This specific LP instance, which features 17,682 variables and 17,706 constraints, was provided by Gridmatic, a company focused on optimizing clean energy markets~\cite{gridmatic}. Its significant scale presents a challenging test case that is representative of modern optimization problems encountered in industry.

\section{Code Availability}
To promote reproducibility and facilitate further research, the source code for this project is publicly available in a \href{https://github.com/SimplySnap/torchPDLP/tree/pypi-package}{GitHub repository}. For ease of instillation, the package is also uploaded to the Python Package Index under the name ``torchPDLP'', and so can easily be installed with the command \texttt{pip install torchPDLP}.

\ifthenelse{\boolean{@twoside}}{\myclearpage}{}
\chapter{Benchmarking Results}\label{chap:results}
In this chapter, we present a comprehensive empirical evaluation of our torchPDLP software on a variety of benchmark linear programming problems. We begin by establishing a performance baseline of the standard PDHG algorithm. Following this, we detail the hyperparameter tuning process undertaken to optimize the solver's configuration. We then systematically evaluate the practical impact of each algorithmic enhancement. Finally we assess the fully-integrated solver on standard LP benchmark collections and on a large-scale Security-Constrained Economic Dispatch (SCED) instance to validate its performance on a practical, real-world application with a comparative analysis of its performance across different CPU and GPU architectures. Error tolerances for termination criteria as discussed are set for the most part at $10^{-4}$, but for some smaller experiments they are set to $10^{-2}$.

\section{Baseline Solver}\label{sec:initial solver}
\begin{figure}[h]
    \centering
    \includegraphics[width=0.8\textwidth]{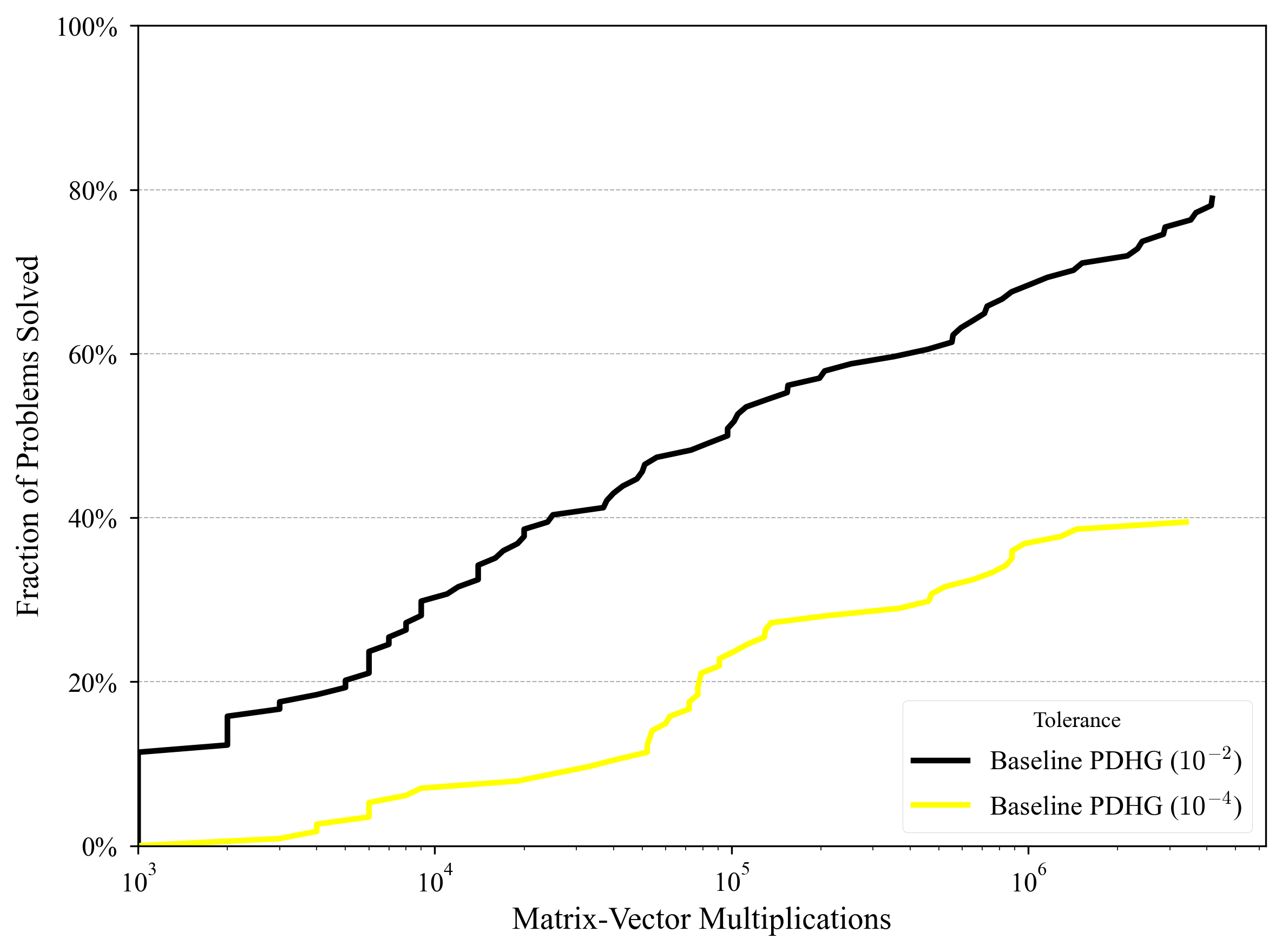}
    \caption{Baseline PDHG Performance}
    \label{fig:iterations}
\end{figure} 

We benchmark the PDHG algorithm without any practical enhancements from sections \ref{sec:pdlp}, and in later sections, we use it as a baseline to measure the effectiveness of the enhancements. The parameters we set for this implementation are a primal weight of $\omega=\frac{\|c\|_2}{\|q\|_2}$, a step size of $\eta=0.9/\|K\|_2$, and an extrapolation parameter of $\theta=1$. These choices are justified by numerical results that we detail in section \ref{sec:parameters}. 

Figure \ref{fig:iterations} compares the convergence rate of the initial solver in terms of iterations at two different error tolerance levels on the feasible problems of the Netlib dataset. In this test, we check if the solution is within the error tolerance every thousand iterations according to equations \eqref{term1}, \eqref{term2} \& \eqref{term3}, and set no maximum on the number of iterations it will run. However, for the sake of time, we set a maximum time limit of 750 seconds per problem. Note that this graph looks very similar to the results of \cite{applegate2} for their baseline PDHG solver benchmarked on the same dataset.

\section{Hyperparameter Tuning}\label{sec:parameters}
There are several hyperparameters in the PDHG algorithm that can vary without changing the theoretical convergence guarantees. We performed sensitivity tests on these parameters to determine what the optimal value was for our algorithm to minimize the number of iterations for convergence.

For the results in this section, we used a sample of twenty feasible Netlib problems, that can be found on our GitHub \cite{github}, and ran our solver with a tolerance of $10^{-2}$ on these problems with varying values for the parameter of interest. Every other parameter was held constant at the values specified in section \ref{sec:initial solver}.

\begin{figure}[h]
    \centering
    \begin{minipage}[b]{0.32\textwidth}
        \centering
        \includegraphics[width=\linewidth]{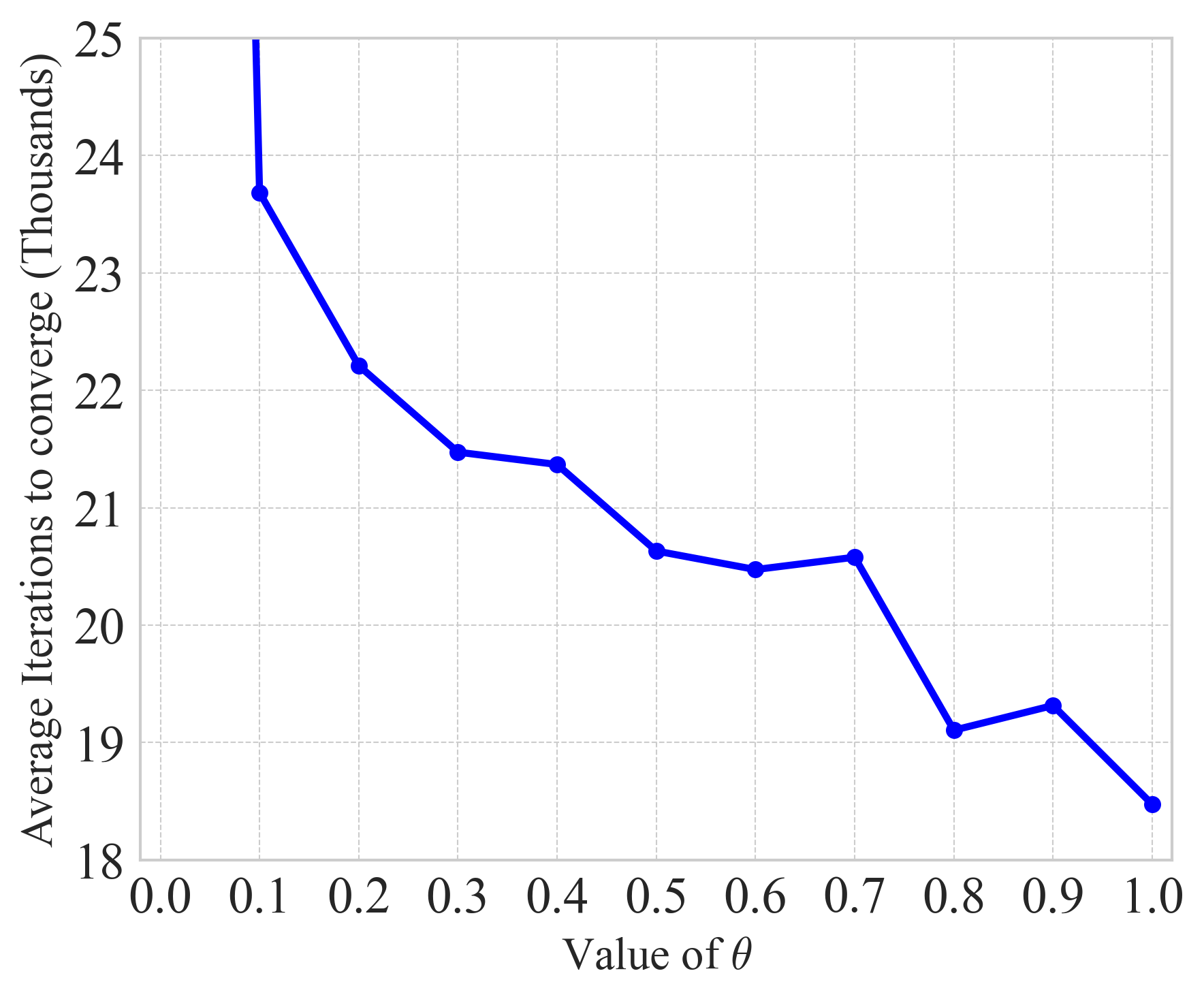}
        \caption{Effect of varying the extrapolation parameter}
        \label{fig:theta}
    \end{minipage}
    \hfill
    \begin{minipage}[b]{0.32\textwidth}
        \centering
        \includegraphics[width=\linewidth]{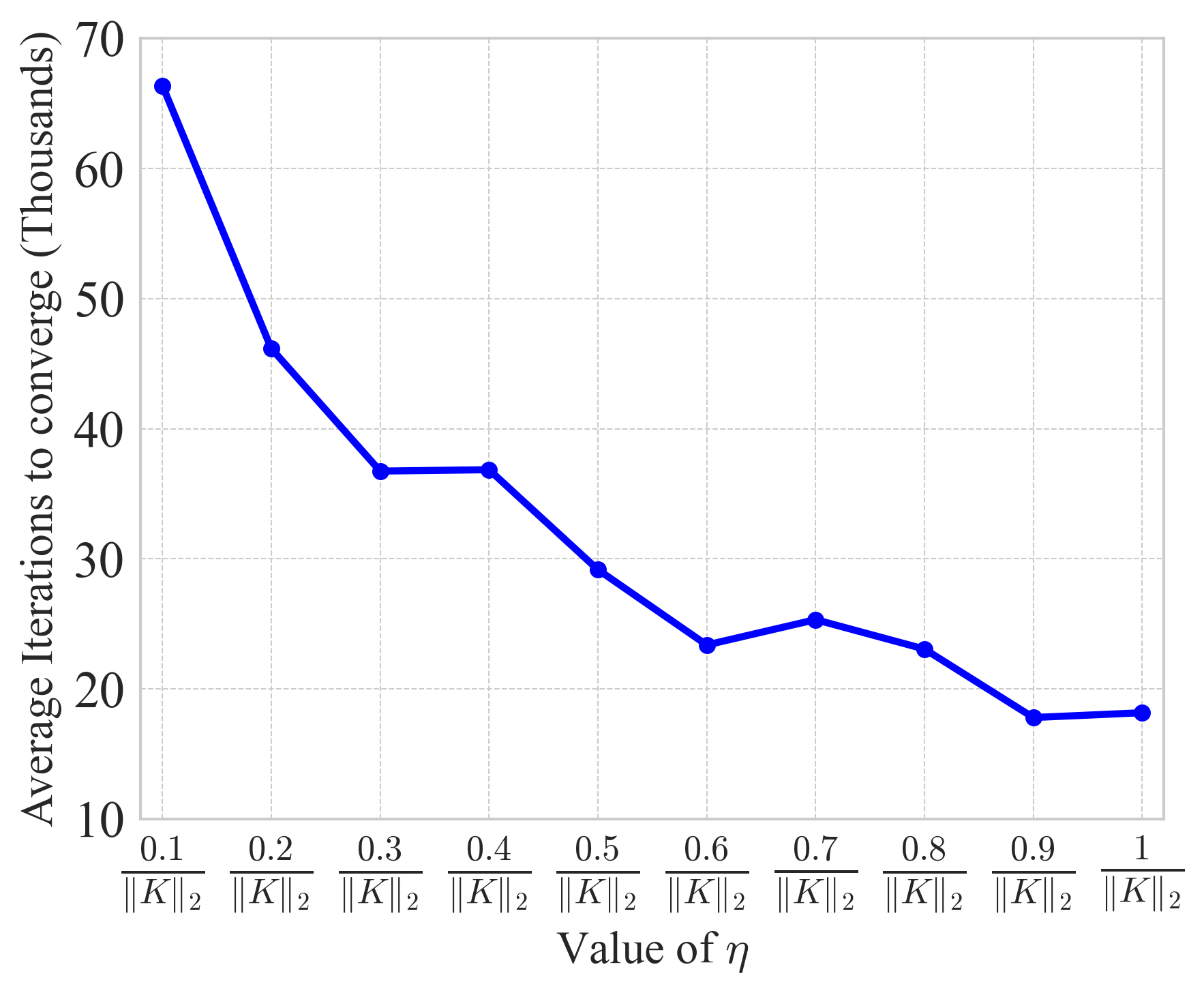}
        \caption{Effect of varying the step size parameter}
        \label{fig:eta}
    \end{minipage}
    \hfill
    \begin{minipage}[b]{0.32\textwidth}
        \centering
        \includegraphics[width=\linewidth]{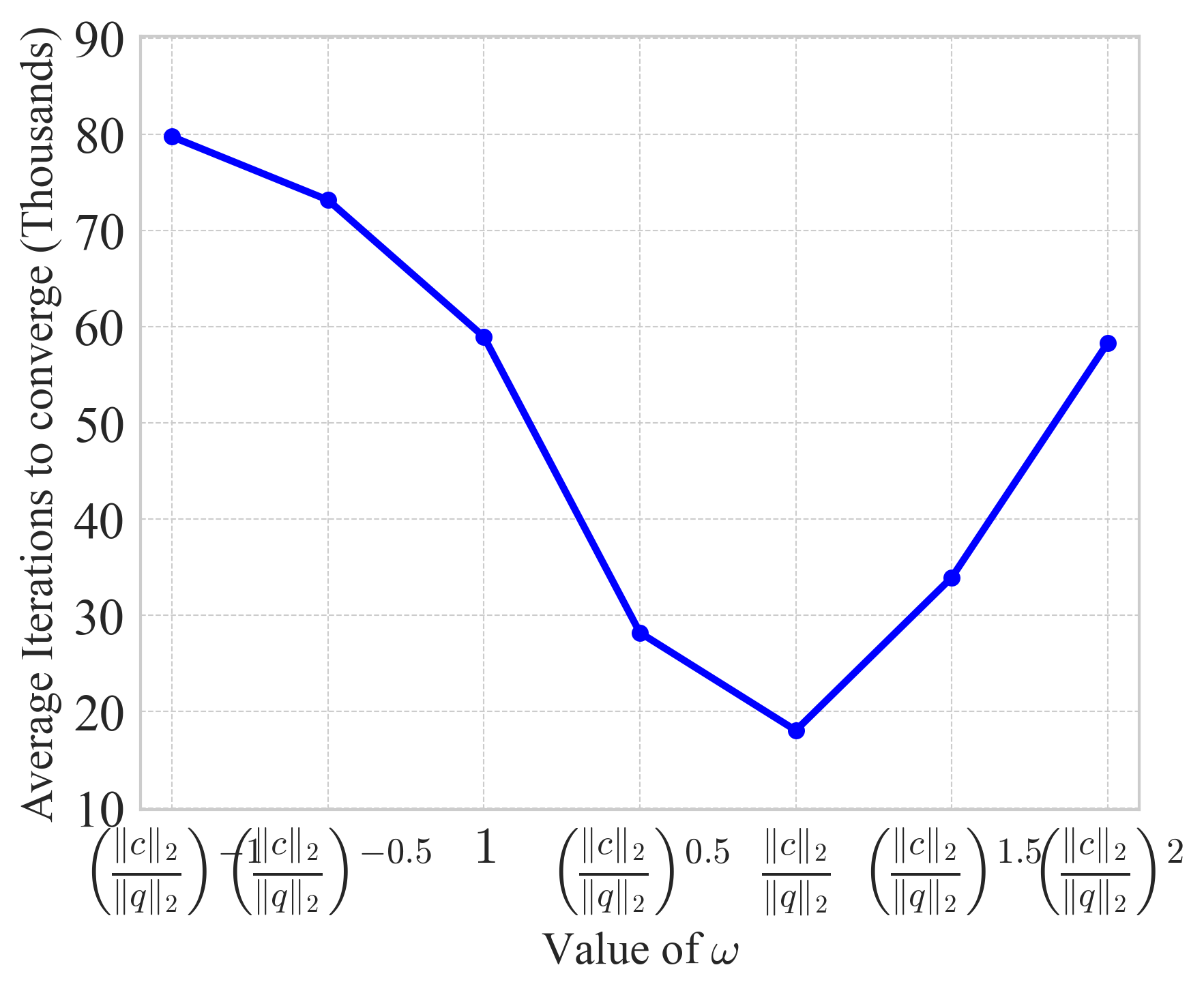}
        \caption{Effect of varying the primal weight parameter}
        \label{fig:omega}
    \end{minipage}
\end{figure}

\subsection{Extrapolation Parameter}
In the PDHG algorithm, convergence is guaranteed theoretically for any $\theta \in [0,1]$. It can be shown that for $\theta=0$ the algorithm converges with a rate of $\mathcal{O}(1/\sqrt{N})$, and for $\theta=1$ the rate is $\mathcal{O}(1/N)$ \cite{chambolle2011first}. These rates of convergence are supported by our numerical experiments, which suggest that increasing $\theta$ from 0 to 1 results in continuous improvement in the convergence rate of the PDHG algorithm.

Figure \ref{fig:theta} shows the average number of iterations it took the problems in our sample to converge with $\theta$ increasing from 0 to 1 with steps of size 0.1. The number of iterations at $\theta=0$ was $57.6$ thousand, over double the iterations at $\theta = 0.1$, and so it was omitted from this graph. As expected, setting $\theta=1$ resulted in the fastest convergence of the algorithm, and so this is the value we set in our solver.

\subsection{Step Size Parameter}
For convergence of the algorithm to be guaranteed, it is required that the step size $\eta \leq 1/\|K\|_2$. Traditionally, this requirement is met by setting $\eta=1/\|K\|_2$, but we test the difference in convergence achieved by setting smaller values of $\eta$ by changing the multiplier in the numerator we divide by $\|K\|_2$.

Figure \ref{fig:eta} shows the average number of iterations it took the problems in our sample to converge with $\eta$ increasing from $0.1/\|K\|_2$ to $1/\|K\|_2$ with steps of size $0.1/\|K\|_2$. For the most part, increasing the step size resulted in faster convergence, until getting to $\eta=1/\|K\|_2$ which resulted in convergence that was slightly slower than that of $\eta=0.9/\|K\|_2$. And so this is the value we set for the step size in our initial solver.
\subsection{Primal Weight Parameter}
The primal weight $\omega$ has no restrictions on its value to guarantee convergence of the algorithm, but we've found that changing its value does affect the rate of convergence. One option is to weight the primal and dual step sizes the same, corresponding to the value $\omega=1$. However, this choice disregards the relative magnitudes of the primal and dual LP and so inspired by \cite{applegate2}, we tested our algorithm with the primal weight set at $\omega =\frac{\|c\|_2}{\|q\|_2}$ (unless either of $\|c\|_2$ or $\|q\|_2$ is within the small zero tolerance ($10^{-6}$), in which case we set the primal weight as $\omega=1$). Essentially, this choice scales each primal iteration with the magnitude of the primal objective function, compared to that of the dual, leading to faster convergence as the algorithm's iterations are better suited to the constraint norms for both the primal and dual.

For completeness, we tested raising $\frac{\|c\|_2}{\|q\|_2}$ to a variety of powers from $-1$ to $2$ and these results are shown in Figure \ref{fig:omega}. As expected, setting $\omega=\frac{\|c\|_2}{\|q\|_2}$ resulted in the fastest convergence, and so this is the value we set in our algorithm.


\section{Improvements with adaptations}\label{sec:pdlp_results}
As noted in section \ref{sec:pdlp}, we have implemented a number of adaptations to the base algorithm to improve its performance. In this section, we compare the baseline solver as outlined in section \ref{sec:initial_solver} with the change in performance induced by implementing a particular adaptation. We use an error tolerance of $10^{-2}$, and for the sake of time, we set a max iteration count of $10^5$. The dataset we used was the feasible problems from the Netlib library. The experiments in this section were run on the AMD Instinct MI210 GPU.

As we did not develop the presolving software PaPILO, we do not document the performance gain from using the software. Numerical experiments run using PaPILO can be found here \cite{papilo}. 
\subsection{Preconditioning}
As discussed in section \ref{sec:Netlib}, most of the Netlib problems are `ill-conditioned', meaning the magnitudes in their constraint matrix vary wildly between entries. Therefore, we expect that implementing our preconditioning step, algorithm \ref{alg:ruiz_scaling}, should have a significant effect on the performance on this benchmark set, which is exactly what we observe in figure \ref{fig:preconditioning}. This graph shows the increased convergence speed of the PDHG algorithm after implementing preconditioning.

\subsection{Adaptive Step Size}
The adaptive step size enhancement adds the most computational expense to the solver, as is clear from algorithm \ref{alg:adaptive_step}. This expense can be seen in Figure \ref{fig:adaptive_step_size}, which shows an increase in the time to solve easier problems with the enhancement. But for the harder problems, which take the PDHG algorithm more than ten seconds to solve with a fixed step size, the implementation of the adaptive step size algorithm leads to faster convergence and more problems solved.

\subsection{Primal Weight Updating}
Implementing primal weight updating as described in section \ref{sec:primal_weight} would require first implementing adaptive restarting as described in section \ref{sec:restart}. However, since we want to measure the effect that each individual enhancement has on the rate of convergence, in this test, our primal weight updating works a little differently.

Instead of updating the primal weight every restart with $\Delta_x^n$ and $\Delta_y^n$ set as the change in the primal and dual variables respectively since the last restart, we update the primal weight each time the algorithm checks the error of the current iterate, and we set $\Delta_x^n$ and $\Delta_y^n$ as the change in those respective variables since the last error tolerance check.

As shown in Figure \ref{fig:primal_weight}, this implementation had almost no effect on the convergence rate of the algorithm. One explanation of this could be that there is some reason that primal weight updating must be done in conjunction with the adaptive restarting scheme to be effective.

Another possible explanation is that the main benefit of using this enhancement comes just from initializing the primal weight with $\omega=\frac{\|c\|_2}{\|q\|_2}$, which we already do in our baseline algorithm. In the literature on this method \cite{applegate2}, the baseline solver they compare their adaptations with sets the primal weight as $\omega=1$, and they never compare just the effect of initializing the primal weight without implementing the updating algorithm. More testing will be needed to better understand this discrepancy.

\subsection{Adaptive Restarting}
Given the restart criteria in section \ref{sec:restart}, it is theoretically guaranteed that restarted PDHG will perform as well or better than the baseline algorithm in terms of the number of iterations to converge, though this comes at the expense of costly KKT error calculations. However, as shown by Figure \ref{fig:restart}, the decrease in iterations outweighs the computational expense and the restarted algorithm is consistently faster than the baseline and solves a higher percentage of problems.

\subsection{torchPDLP}
As described in section \ref{sec: torchPDLP} our implementation torchPDLP incorporates all four of the previously discussed enhancements, as well as presolving capabilities. In the implementation, however, only the adaptive restarting enhancement is fully integrated into the solver by default. The other three enhancements are optional and can be enabled based on user preference. It is important to note that the subsequent enhancements described in sections \ref{sec:infeas_detect} (infeasibility detection) and \ref{sec:Fishnet} (Fishnet Casting) are not included because the current benchmarks are run exclusively on feasible problems, and the Fishnet Casting procedure is still under development. 
\begin{figure}[htbp]
    \centering

    \begin{subfigure}[t]{0.48\textwidth}
        \centering
        \includegraphics[width=\textwidth]{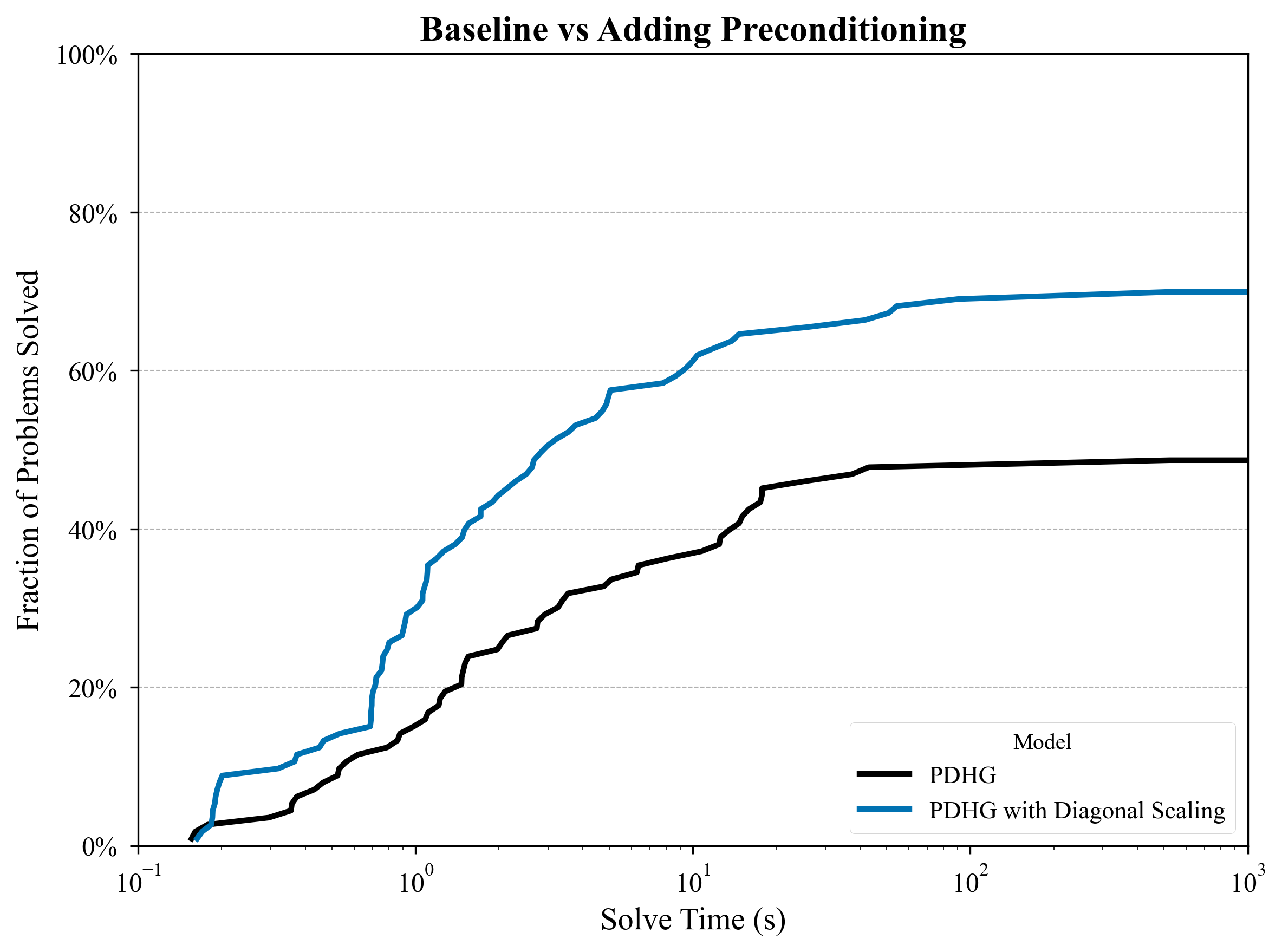}
        \subcaption{Preconditioning}
        \label{fig:preconditioning}
    \end{subfigure}
    \hfill
    \begin{subfigure}[t]{0.48\textwidth}
        \centering
        \includegraphics[width=\textwidth]{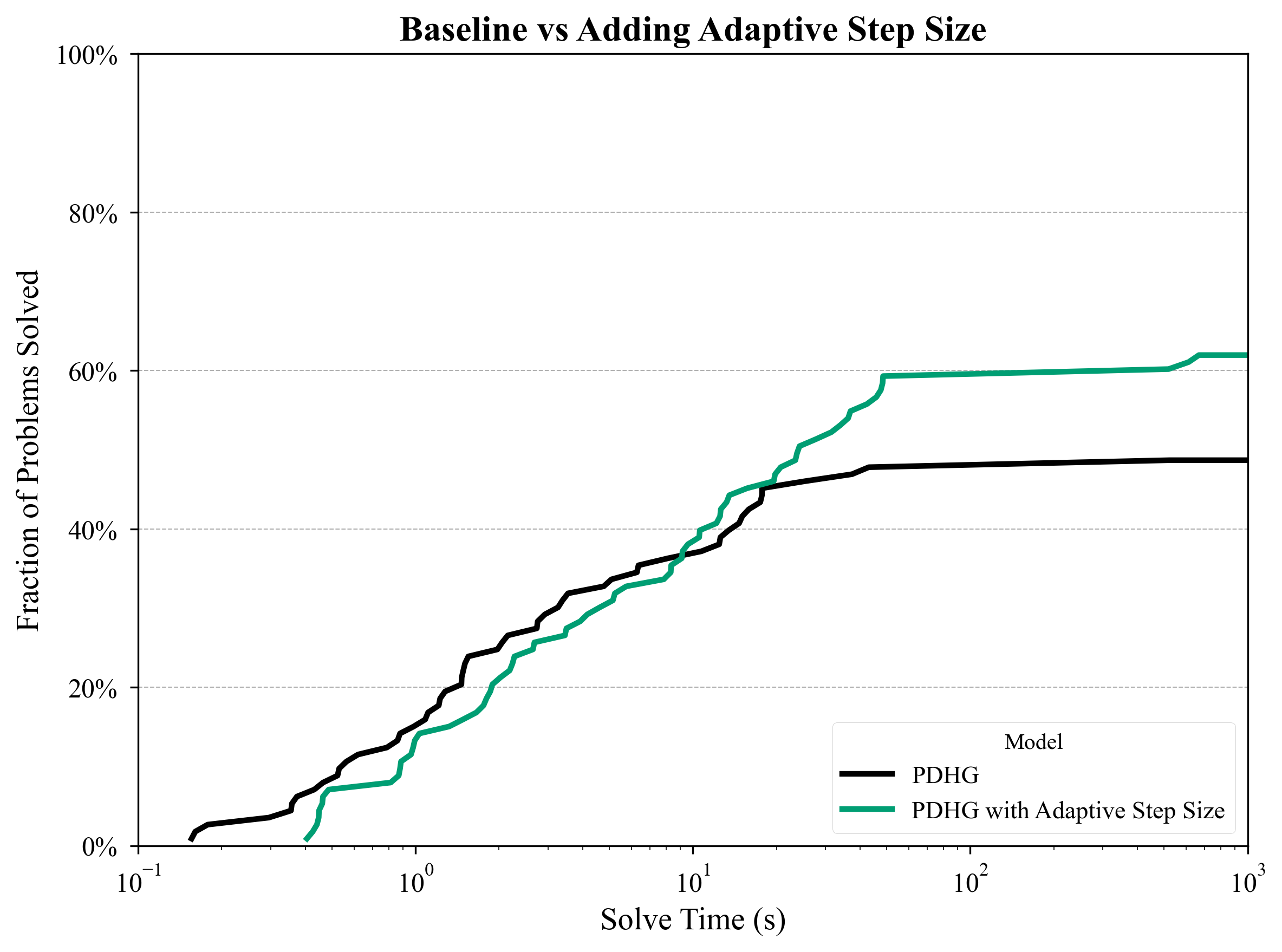}
        \subcaption{Adaptive step size}
        \label{fig:adaptive_step_size}
    \end{subfigure}

    \vspace{0.3cm} 

    \begin{subfigure}[t]{0.48\textwidth}
        \centering
        \includegraphics[width=\textwidth]{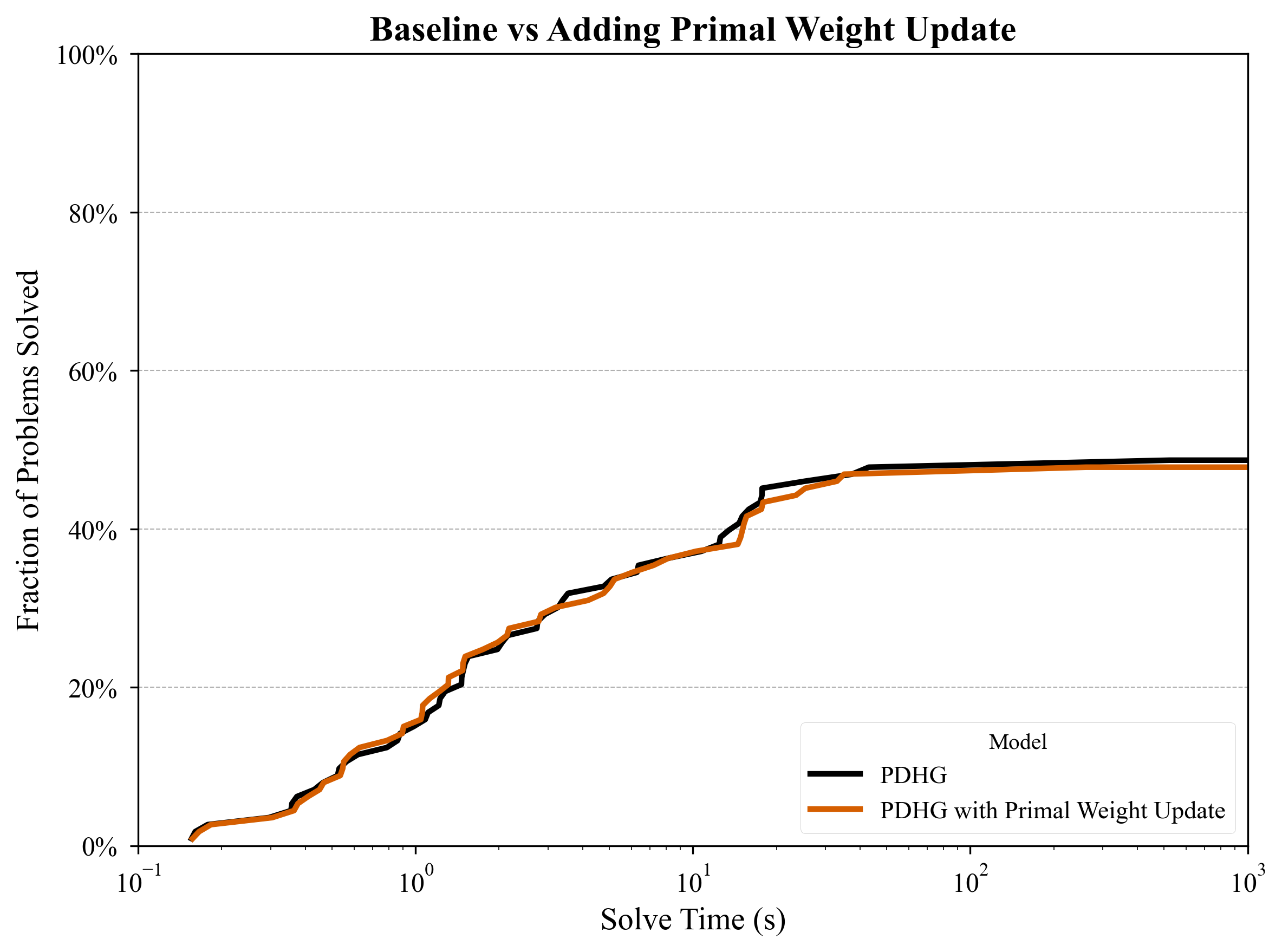}
        \subcaption{Primal weight updates}
        \label{fig:primal_weight}
    \end{subfigure}
    \hfill
    \begin{subfigure}[t]{0.48\textwidth}
        \centering
        \includegraphics[width=\textwidth]{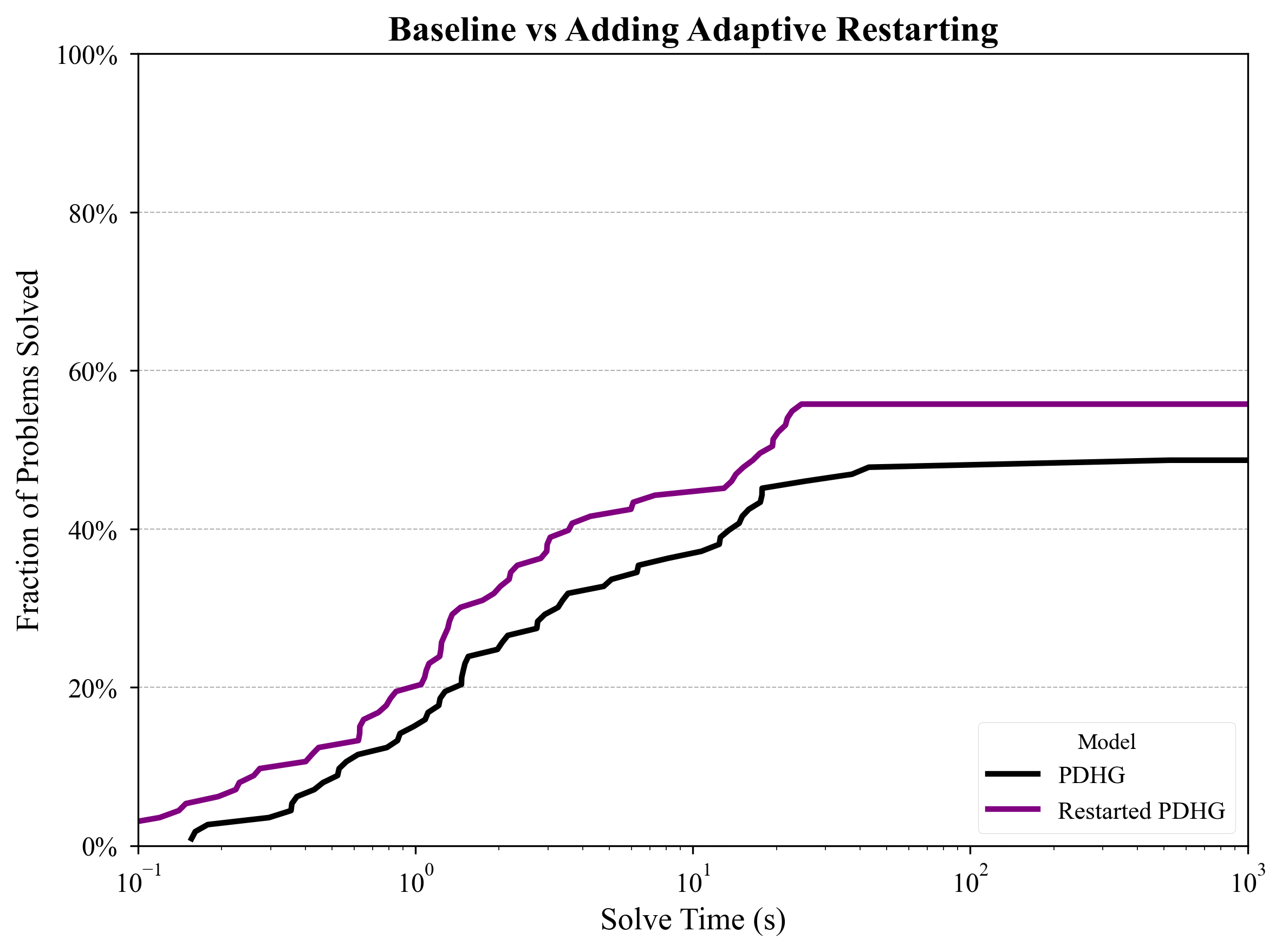}
        \subcaption{Adaptive restarting}
        \label{fig:restart}
    \end{subfigure}

    \vspace{0.5cm} 
    \includegraphics[width=0.8\textwidth]{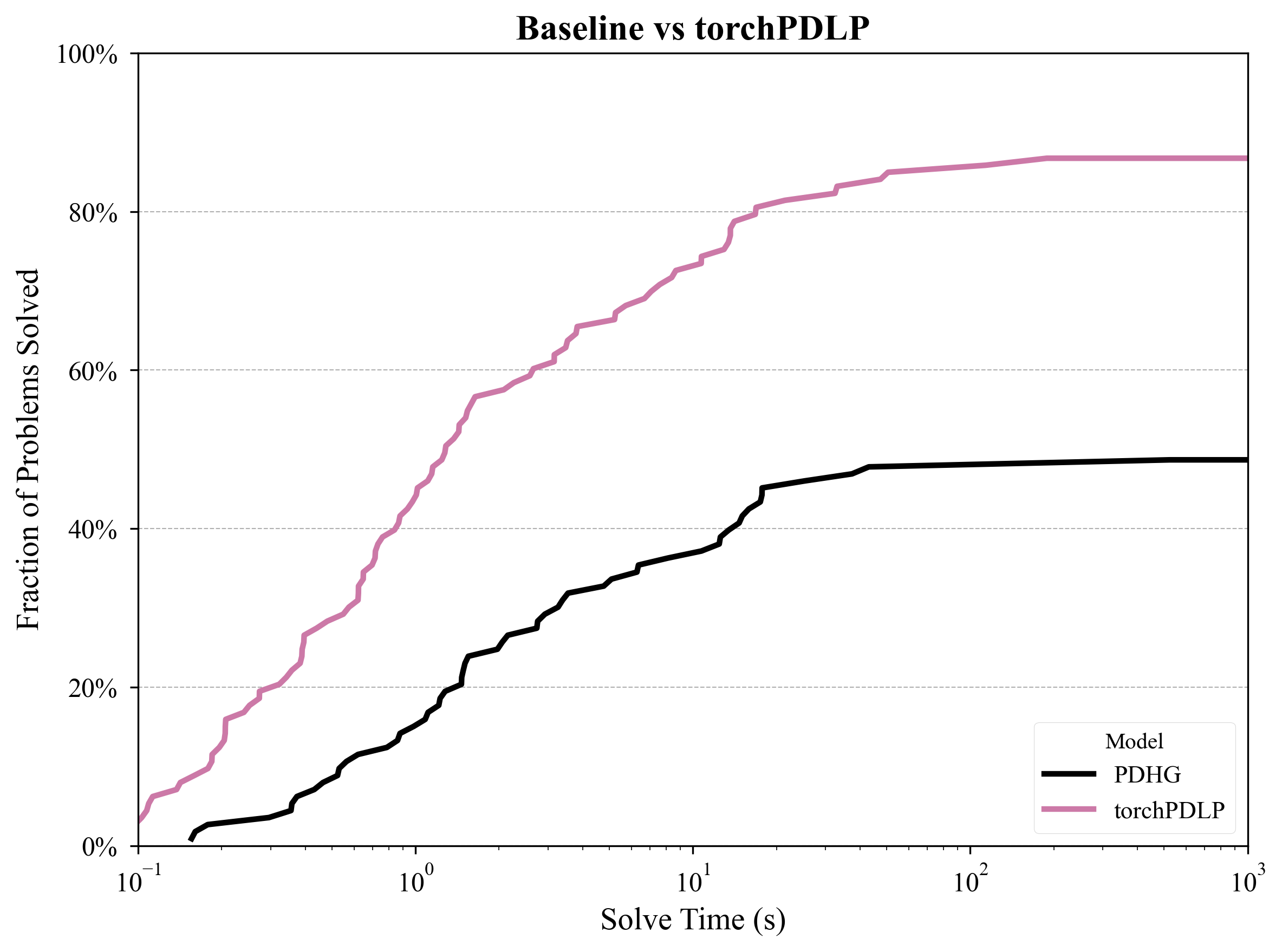}
    \caption[Performance improvements of PDHG solver]{Performance improvements of PDHG solver under different enhancements. Top four subfigures show individual enhancements, while the bottom figure compares PDHG with no enhancements vs torchPDLP.}

    \label{fig:pdhg_enhancements_full}
\end{figure}

    



\subsection{Infeasibility Detection}
Figure~\ref{fig:infeasibility} reports the cumulative fraction of detected infeasible instances as a function of solve time on the Netlib infeasible set \cite{Netlib_infeas}. A run is considered a detection when the solver produces a primal or dual infeasibility certificate. Within 100,000 iterations, torchPDLP successfully detects 22 of the 29 infeasible instances, with the exceptions being \texttt{bgindy}, \texttt{chemcom}, \texttt{forest6}, \texttt{greenbea}, \texttt{mondou2}, \texttt{qual}, and \texttt{reactor}. This result demonstrates a strong detection capability for this benchmark set.
\begin{figure}[htpb]
    \centering
    \includegraphics[width=0.8\linewidth]{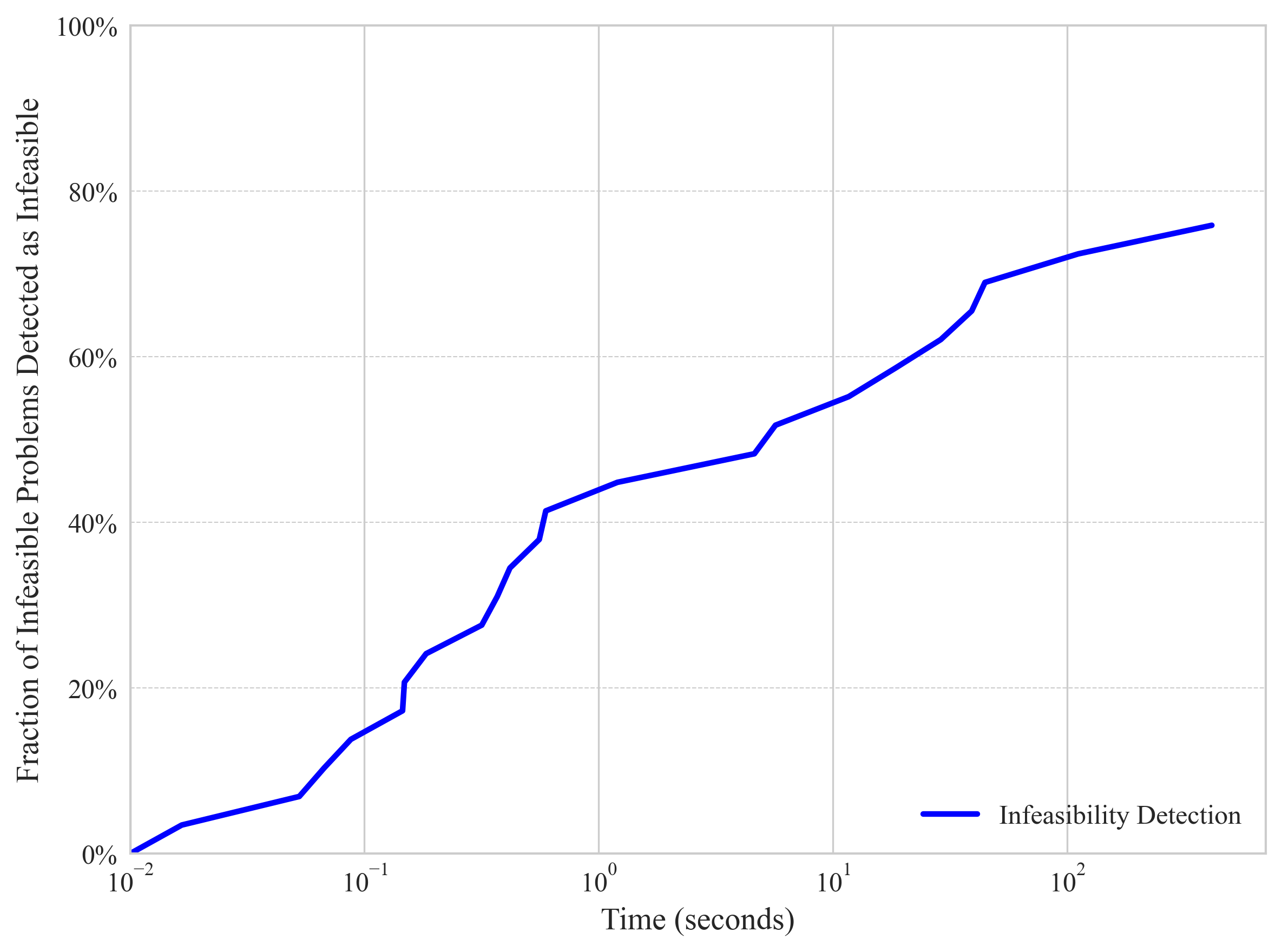}
    \caption{Cumulative fraction of infeasible instances detected vs solve time, tol = $10^{-4}$}
    \label{fig:infeasibility}
\end{figure}

\subsection{Fishnet Casting}\label{4.3.7}
As noted in Chapter 2, fishnet casting is an original, experimental adaptation designed to provide a better starting point for torchPDLP. We collected results from both an instance of Security Constrained Economic Dispatch, with tens of thousands of variables and constraints, and a subset of Netlib problems. Our spectral casting function initially cast 32 random points, regardless of problem size, and we culled 50\% of points at any given cull\_points() call. Similarly, to repopulate a single primal point, our convex combination rule sampled k random normal variables for k the number of primal/dual points, normalized them so they summed to one, and created a new primal point with $\sum_{i=1}^{32}\omega_i \cdot x_i$ for $x_i$ the $i^{th}$ point and $\omega^{i}$ the normalized $i^{th}$ weight. These same weights were applied to the equivalent dual variables in the same manner to produce a comparable point in dual-space.

For the subset of Netlib problems, we found that fishnet casting did not provide a consistent speedup, sometimes resulting in slower convergence or more iterations, and sometimes providing a speedup. This is likely due to the spectral norm not being a good estimate for enclosing the primal feasible region; crucially, the time taken to run fishnet casting was \textit{not} significant for all tested problems, as even for larger problems, fishnet casting never took more than 2 seconds to run. Many Netlib problems are ill-conditioned \cite{ordonez2003computational}, and so the spectral norm might not accurately reflect problem difficulty.

Some improvements might be to make the origin part of the initial spectral casting set, so our set of points is \textit{no worse} than without the adaptation, and scaling the casting ellipsoid with the constraint vectors to more accurately capture the feasible region.

\begin{figure}[htbp]
    \centering
\includegraphics[width=0.8\linewidth]{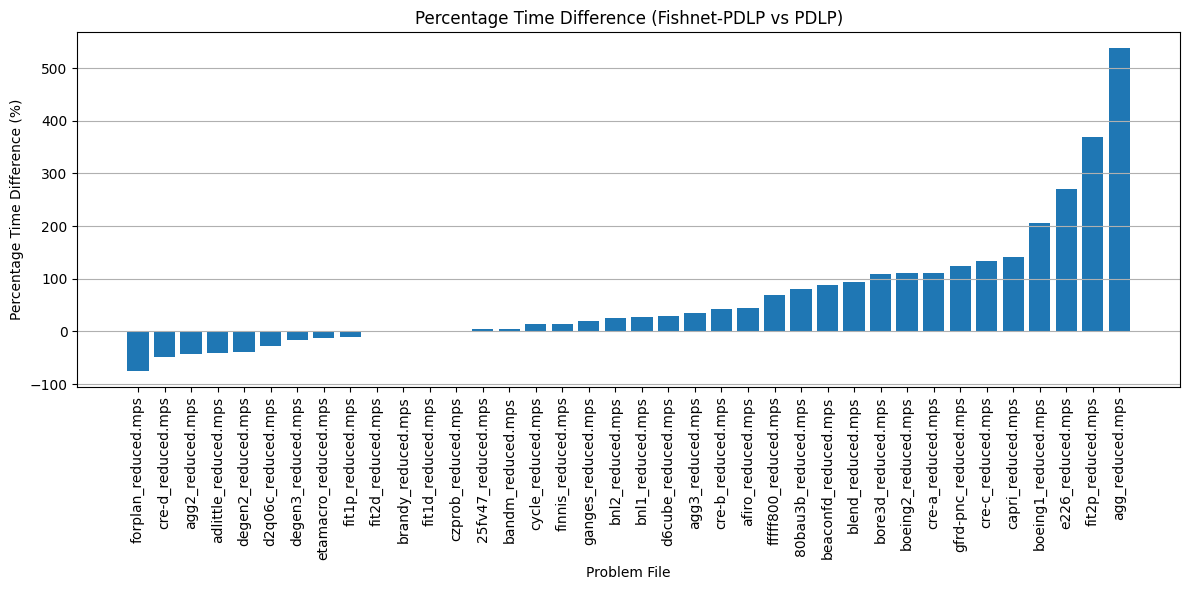}
    \caption{Netlib sample with/without fishnet, percent time change}
    \label{fig:fishnet_vs_normal}
\end{figure}
\vspace{-0.8cm}
\section{Comparison between CPU and GPU}
Building upon the multithreading analysis in Section \ref{multithread}, we conducted a comprehensive performance comparison between torchPDLP executing on the AMD EPYC 7V13 CPU and the AMD MI325X GPU using the standardized datasets. Our evaluation demonstrates that the MI325X GPU successfully solves a greater number of problems within the one-hour time limit while maintaining the same convergence tolerance of $10^{-4}$. 

\begin{figure}[h]
    \begin{subfigure}[t]{0.49\textwidth}
        \centering
        \includegraphics[width=\textwidth]{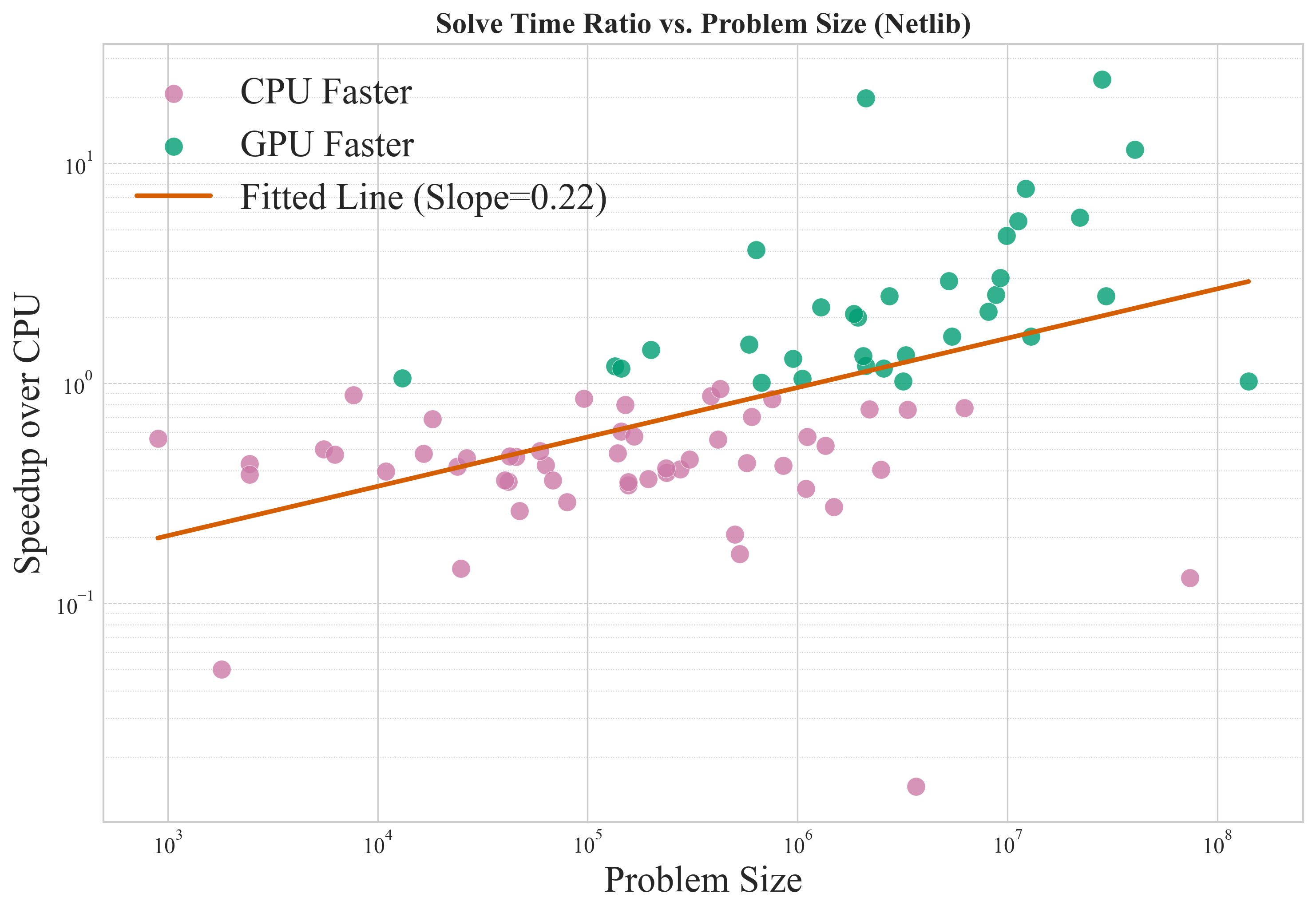}
        \caption{Netlib dataset}
        \label{fig:Netlib_ratio}
    \end{subfigure}
    \hfill
    \begin{subfigure}[t]{0.49\textwidth}
        \centering
        \includegraphics[width=\textwidth]{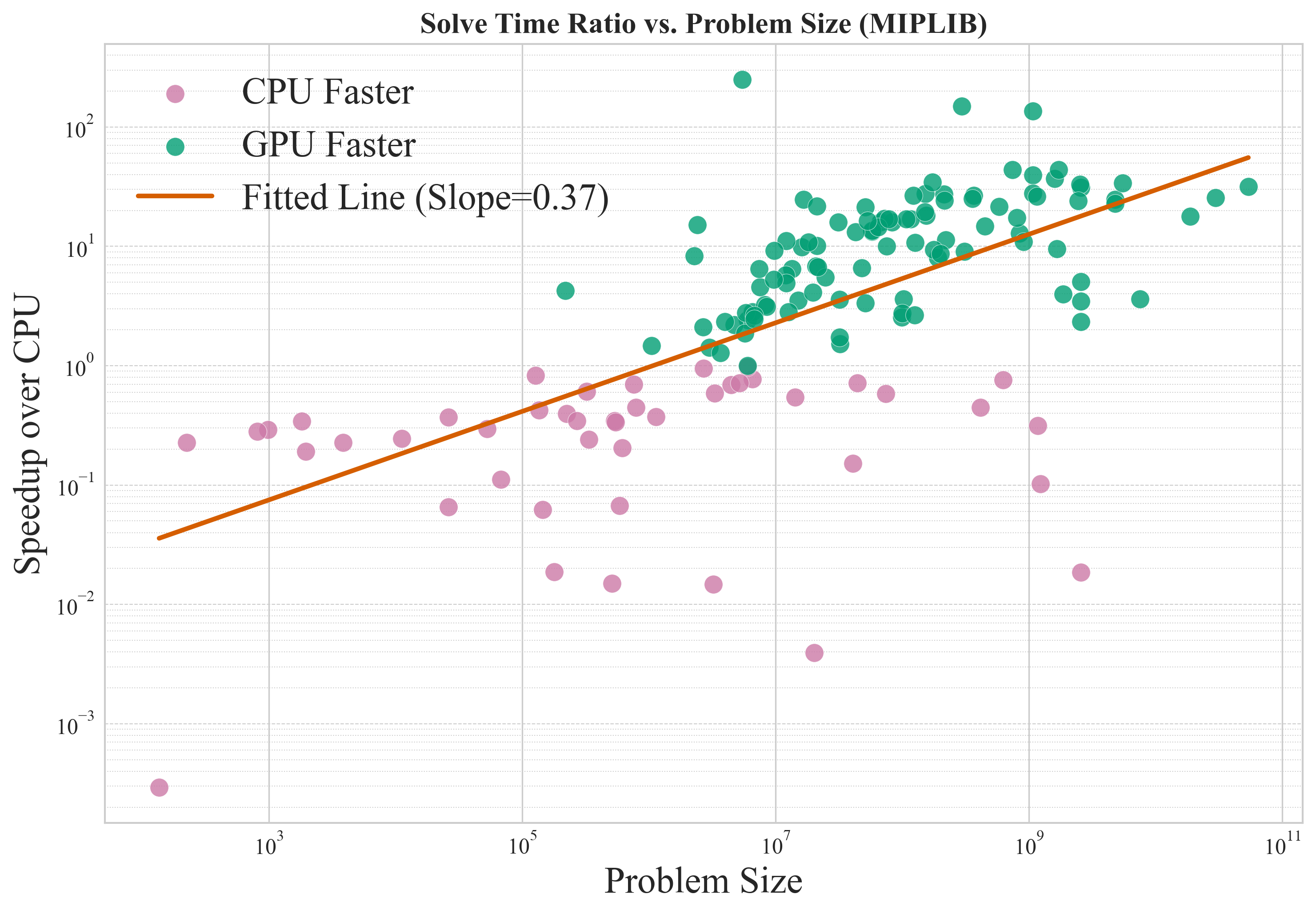}
        \caption{MIPLIB dataset}
        \label{fig:mip_ratio}
    \end{subfigure}

    \caption{CPU vs GPU performances for torchPDLP.}
    \label{fig:cpu_gpu_combined}
\end{figure}

The computational advantage of GPU-based solving becomes increasingly pronounced with problem size, as illustrated in Figures \ref{fig:Netlib_ratio} and \ref{fig:mip_ratio}, where speedup over CPU is defined as the ratio of the time to solve on CPU over the time to solve on GPU. This scaling behavior reflects the GPU's superior capacity for parallel computation, where larger problems provide more opportunities for effective parallelization of the underlying linear algebra operations. 

Figure \ref{fig:Netlib_cpu} illustrates this phenomenon of GPU kernel launch overhead, which introduces a fixed initialization cost before optimal computational throughput is achieved. This overhead becomes negligible relative to total solve time as complexity increases, enabling the GPU's massively parallel architecture to fully exploit the computational workload. 

\begin{figure}[htbp]
    \centering

    \includegraphics[width=0.8\textwidth]{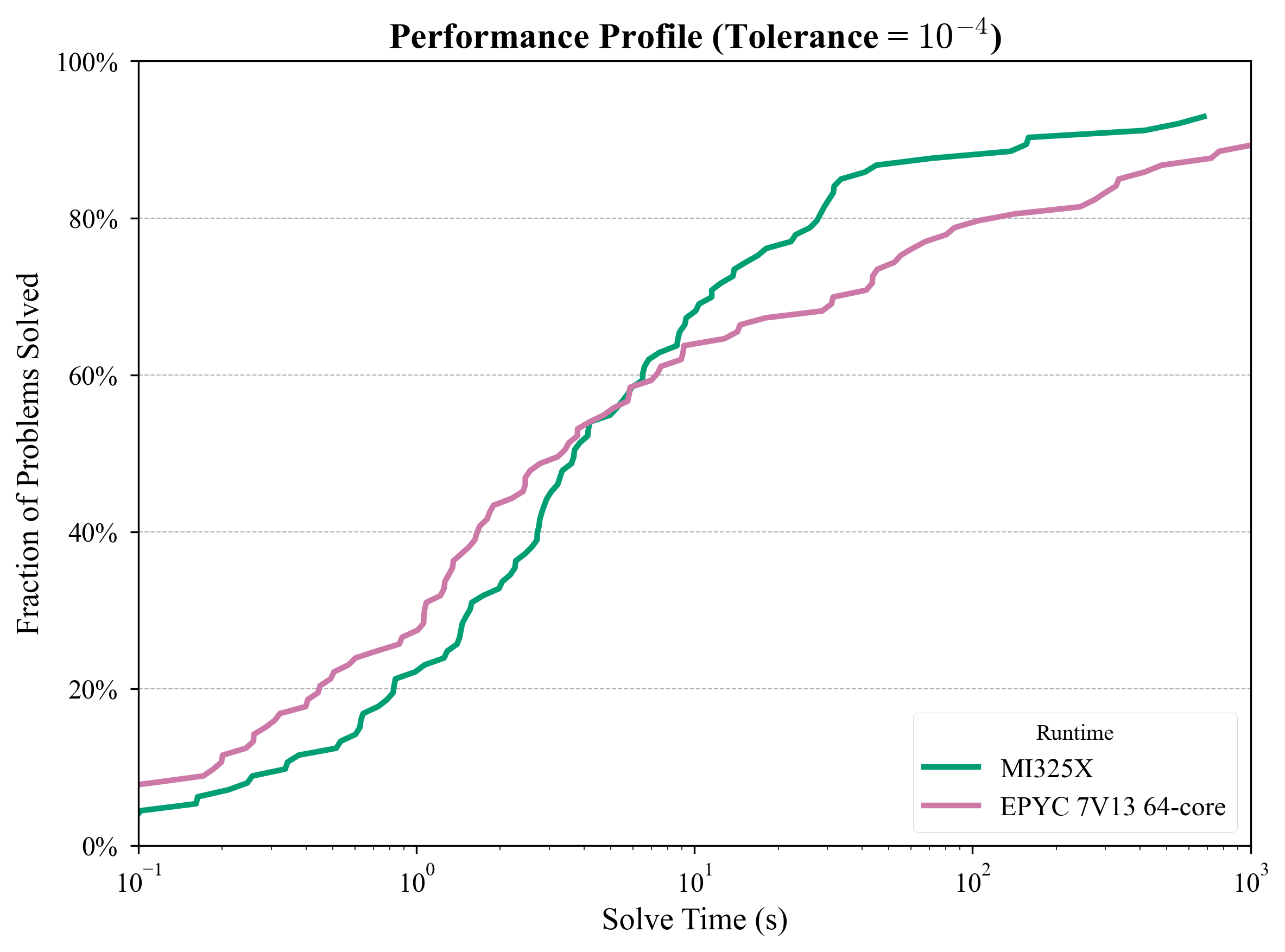}
    \caption{CPU vs GPU torchPDLP Performances on Netlib}
    \label{fig:Netlib_cpu}
\end{figure}

\vspace{-0.8cm}
\section{Comparison between GPUs}
Figure \ref{fig:325_210_Netlib} illustrates that the AMD MI325X achieves a faster overall time-to-solution on the Netlib dataset compared to the MI210, and it exhibits substantially greater computational throughput. While the MI325X completes a higher total number of matrix-vector multiplications per solve, its architecture executes these parallel operations in significantly less cumulative time. 
\begin{figure}[h]
    \centering
\includegraphics[width=\textwidth]{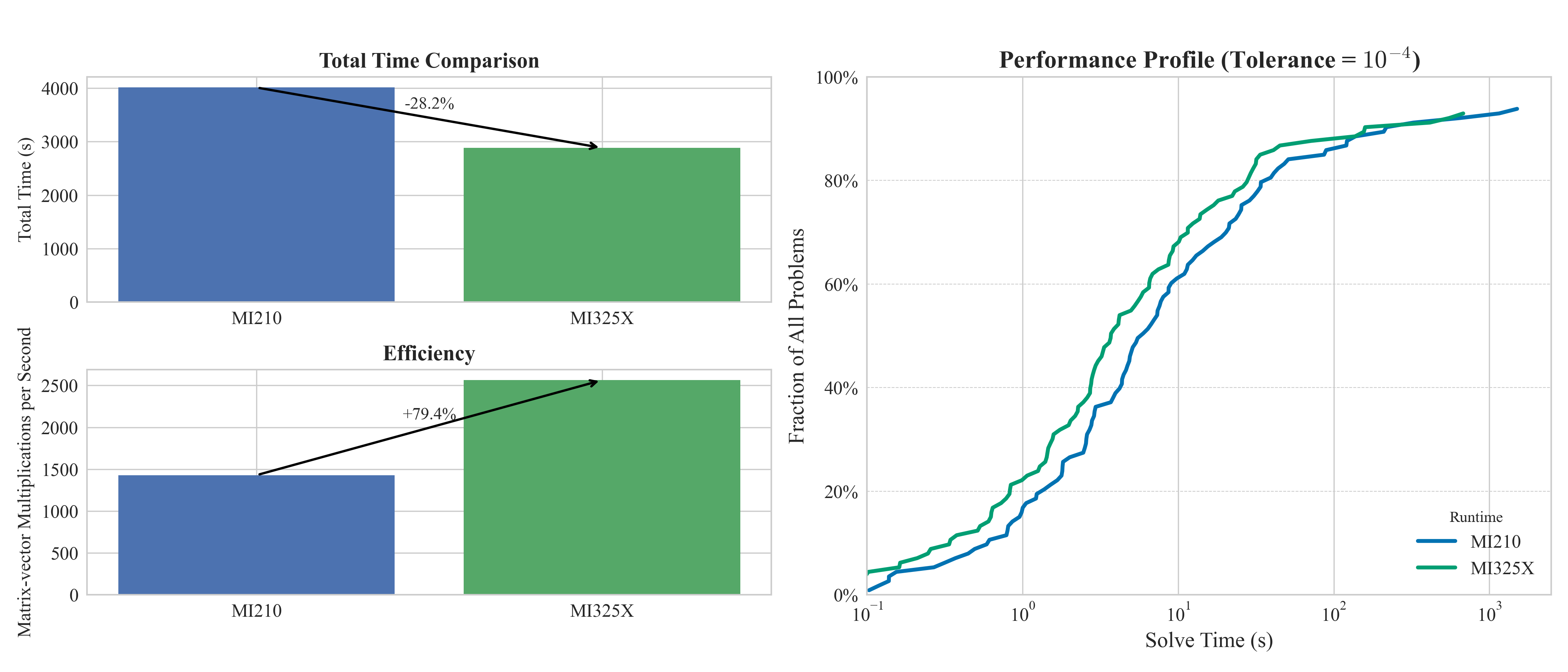}
    \caption{AMD GPU torchPDLP Performances on Netlib}
    \label{fig:325_210_Netlib}
\end{figure}

A key advantage of implementing the solver in PyTorch is the inherent code portability, which facilitates direct, equitable comparisons across hardware from different vendors. The PyTorch framework enables the same torchPDLP codebase to run seamlessly on NVIDIA hardware without manual code modification.

Figure \ref{fig:325_a100_Netlib} presents this cross-vendor analysis, comparing the performance of the AMD MI325X against an NVIDIA A100 GPU (accessed via Google Colab). The results show that the MI325X consistently solves a larger fraction of the Netlib benchmark set in less time than the A100, demonstrating a highly competitive performance profile for linear programming tasks.
\begin{figure}[h]
    \centering
\includegraphics[width=0.8\textwidth]{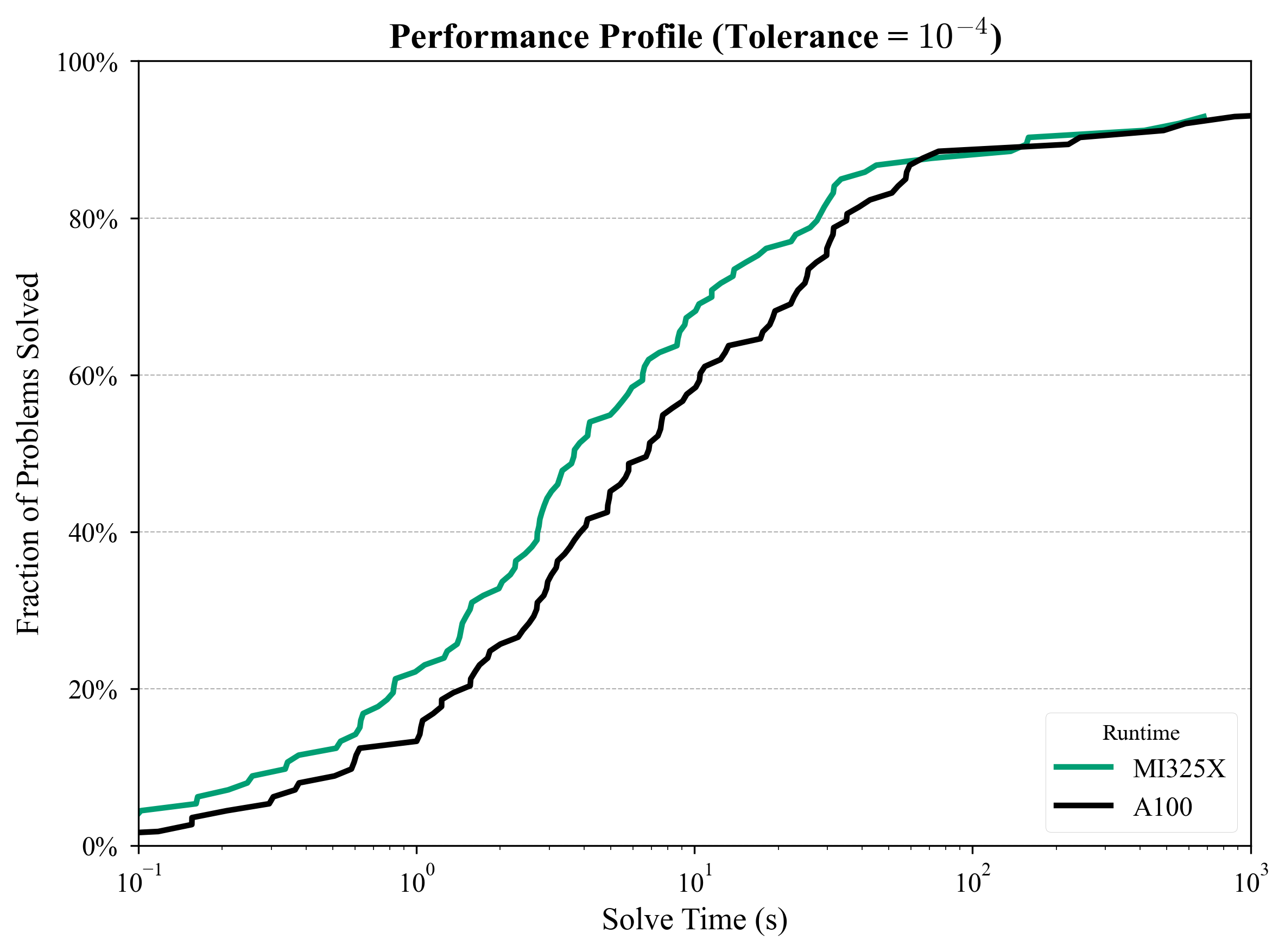}
    \caption{AMD vs NVIDIA GPU torchPDLP Performances on Netlib}
    \label{fig:325_a100_Netlib}
\end{figure}
\vspace{-0.8cm}
\section{SCED Instance}
\begin{table}[htbp]
\centering
\begin{tabular}{l l}
\hline
\textbf{Device} & \textbf{Time (sec)} \\
\hline
EPYC 7V13 64-Core & 20801.16 \\
MI325X            & 579.50 \\
MI210             & 1007.10 \\
A100              & 727.04 \\
\hline
\end{tabular}
\caption{Time to solve the instance to a tolerance of $10^{-4}$}
\label{tab:SCED}
\end{table}
To evaluate torchPDLP on a real-world application, we used a large-scale SCED instance provided by \emph{Gridmatic}, as mentioned in Section \ref{sec:SCED}. This industrial LP problem serves as an excellent benchmark for comparing solver performance across four distinct hardware accelerators: an AMD EPYC 7V13 CPU, an AMD Instinct MI210 GPU, an AMD Instinct MI325X GPU, and an NVIDIA A100 GPU. Table \ref{tab:SCED} shows the time it took for torchPDLP to solve this instance and Figure \ref{fig:SCED} compares the convergence of the solver as running on these devices.
\begin{figure}[htbp]
\centering
\includegraphics[width=0.8\textwidth]{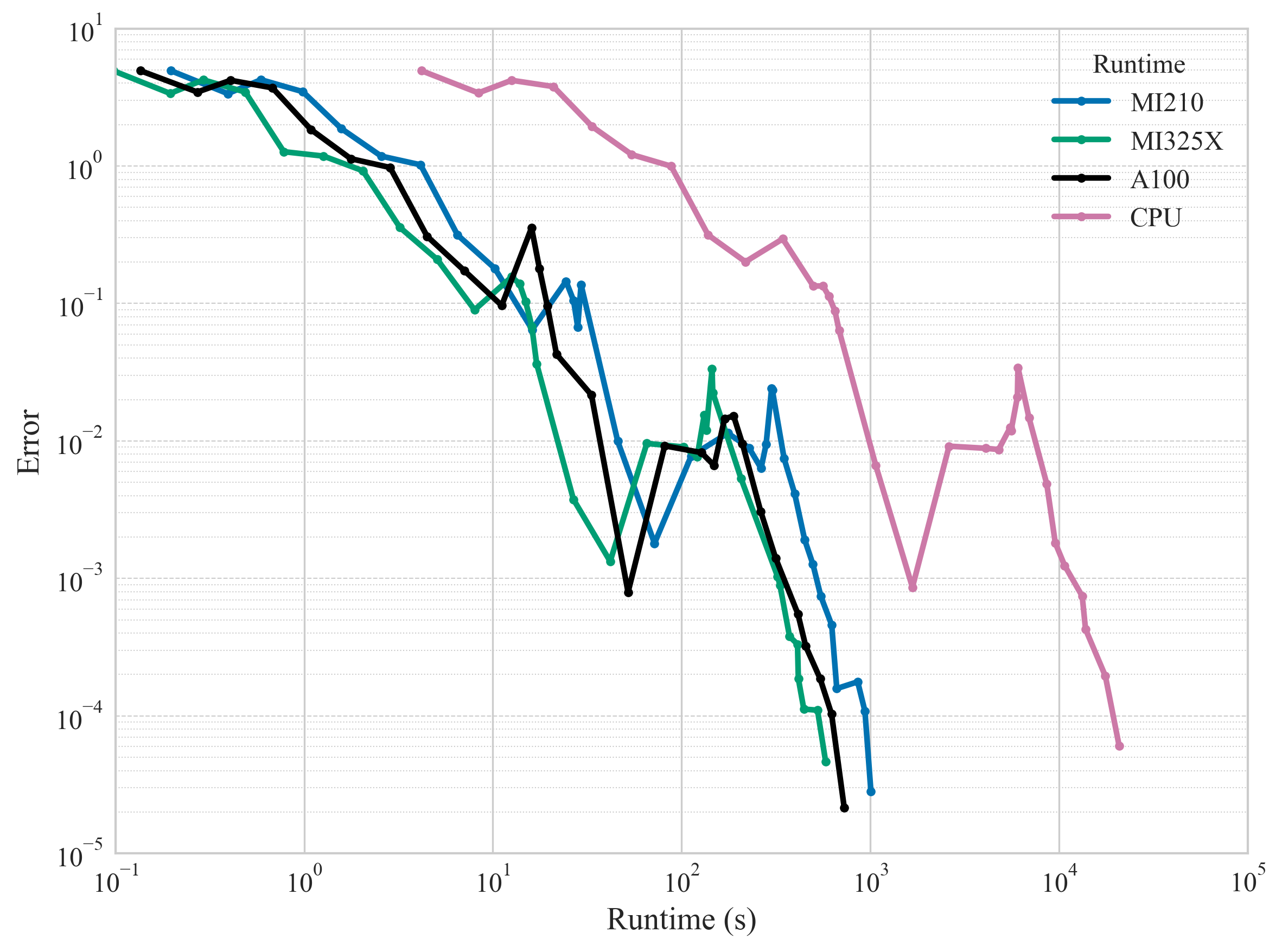}
\caption{Convergence on the Security-Constrained Economic Dispatch instance with error tolerance $10^{-4}$}
\label{fig:SCED}
\end{figure}

As predicted by the scaling trends observed in Figure \ref{fig:cpu_gpu_combined}, the GPUs provide a significant performance uplift over the CPU for a problem of this magnitude, roughly 36x for the MI325X in total runtime. Furthermore, Table \ref{tab:SCED} and Figure \ref{fig:SCED} visually confirms the superior performance of the MI325X relative to both the MI210 and the NVIDIA A100.

We also investigated the effect of `fishnet casting' on the SCED instance using the AMD MI325X. With this heuristic, torchPDLP solved the problem in two separate runs in 6 minutes and 14 seconds and 4 minutes and 37 seconds. The variability in solve time is due to the inherent stochasticity of the heuristic. Both results, however, represent a substantial improvement over the 9 minutes and 36 seconds required to solve the instance from a `cold start' (i.e., without the heuristic).

This preliminary finding suggests that larger, more complex problems may be particularly sensitive to warm-starting strategies. However, the variable performance of the fishnet casting adaptation indicates that further testing is required to validate its robustness and better understand its behavior across different problem structures.

\ifthenelse{\boolean{@twoside}}{\myclearpage}{}
\chapter{Conclusion}\label{chap:conclusion}

In this work, we introduced \texttt{torchPDLP}, an adaptation of the Primal-Dual Hybrid Gradient (PDHG) algorithm for solving large-scale linear programming (LP) problems. Our implementation, developed in the PyTorch framework, integrates both established techniques and novel practical enhancements, including pre-existing ideas in Section \ref{sec:pdlp}: diagonal preconditioning, adaptive restarting, primal weight updating, adaptive step size, and infeasibility detection and a novel enhancement: fishnet casting (Section \ref{sec:Fishnet}). Through extensive benchmarking on diverse datasets and hardware configurations, we evaluated the performance, scalability, and robustness of the solver.

This research makes several contributions to the field of optimization. First, it demonstrates the feasibility of employing modern machine learning frameworks, such as PyTorch, for classical optimization tasks, thereby lowering the barrier for cross-disciplinary research between optimization and machine learning communities. Second, our results reinforce the growing body of evidence that GPU-accelerated first-order methods can be competitive with, and in many cases surpass, traditional CPU-based solvers for large-scale LPs. Consistent with recent literature \cite{lu2023cupdlp,lu2025overview}, we observed that the performance advantage of our GPU implementation increases with problem size, underscoring its suitability for addressing the increasingly large and complex optimization challenges encountered in practice.

There are several promising directions for extending this research. One avenue is a comprehensive comparison with other GPU-based PDLP implementations, such as \texttt{cuOpt} and \texttt{cuPDLP.jl} \cite{lu2023cupdlp,lu2023cupdlp-c}. While \texttt{cuOpt} is primarily implemented in Cand CUDA, \texttt{torchPDLP} relies on PyTorch's kernel management; comparing runtime performance, memory usage, and algorithmic robustness could reveal opportunities for further optimization, including targeted kernel-level tuning.

Another direction involves refining fishnet parameter tuning and developing a more scalable primal-ball initialization strategy. As noted in Section~\ref{4.3.7}, the fishnet method exhibits varying performance across different \texttt{Netlib} problems, with no apparent correlation to problem size. Identifying effective initialization strategies—such as improved multistarting techniques—could enhance the robustness of fishnet casting.

Finally, incorporating recent advancements in first-order GPU-based LP solvers, particularly methods proposed in \cite{chen2025hprl,lu2024restartedhalpern}, may yield further performance gains. These enhancements could be directly integrated into \texttt{torchPDLP}, potentially enabling it to outperform current state-of-the-art solvers on a broader class of large-scale problems.




\ifthenelse{\boolean{@twoside}}{\myclearpage}{\newpage}
\addcontentsline{toc}{chapter}{References}  

\bibliographystyle{siam}     
\renewcommand\bibname{References}
\bibliography{references}

\addcontentsline{toc}{chapter}{Appendix}
\appendix
\ifthenelse{\boolean{@twoside}}{\myclearpage}{}
\chapter{Convex Analysis Background}

In this appendix, we summarize several foundational concepts from convex analysis that are necessary to understand the derivation and implementation of our PDHG method for linear programming. While the primal-dual framework of Chambolle and Pock~\cite{chambolle2011first} provides the algorithmic foundation, it does not cover certain mathematical preliminaries in detail. To bridge this gap, we have drawn on \emph{Convex Analysis} by R.~Tyrrell Rockafellar~\cite{rockafellar1970convex}, extracting and rephrasing in our own words only those results essential for our purposes. These include definitions, basic properties, and optimality conditions that are directly relevant to the saddle-point formulation of LPs and to the proximal operators used in PDHG.
\section{Explaining the Saddle-Point LP}
In this section, we present the theory of convex saddle-point problems and review the definitions and properties needed for our algorithm. We then explain how linear programs can be expressed in saddle-point form.
\subsection{Convex Saddle-Point Problems}
We first introduce the general convex-concave saddle-point problem that our algorithm is designed to solve. Let \( X \) and \( Y \) be finite-dimensional real vector spaces equipped with inner products \(\langle \cdot, \cdot \rangle\) and induced norms \(\| \cdot \| = \langle \cdot, \cdot \rangle ^ \frac{1}{2} \). Let \( K : X \to Y \) be a continuous linear operator, whose operator norm is defined by
\begin{equation}
    \|K\| = \max \{ \|Kx\| : x \in X, \|x\| \le 1 \}.
\end{equation}

The generic form of the problem is given by
\begin{equation}
    \min_{x \in X} \max_{y \in Y} \; \langle Kx, y \rangle + G(x) - F^*(y),
    \label{eq:saddle-point}
\end{equation}
where the functions \( G : X \to [0, +\infty] \) and \( F^* : Y \to [0, +\infty] \) are assumed to be \emph{proper}, \emph{convex}, and \emph{lower semicontinuous (l.s.c.)}. 

\begin{itemize}
    \item \emph{Domain} of a function \( f : V \to (-\infty, +\infty] \) is \( dom(f) = \{v \in V | f(v) \neq + \infty\}\)
    
    \item A function \(f\) is \emph{proper} if \(dom(f) \neq \emptyset \).
    
    \item A set \(S\) is \emph{convex} if for all \(s_1,s_2 \in S\) and \( \theta \in [0,1] \), 
    \[
    [ \theta s_1 + (1-\theta) s_2 ] \in S.
    \]
    
    \item \( f \) is \emph{convex} if \(dom(f)\) is convex, and for all \( v_1, v_2 \) in \(dom(f)\), \( \theta \in [0,1] \),
    \[
    f(\theta v_1 + (1 - \theta) v_2) \le \theta f(v_1) + (1 - \theta) f(v_2).
    \]
    \item \( f \) is \emph{lower semicontinuous} if for any \(v_0 \in V\),
    \[
    \liminf_{v \to v_0} f(v) \ge f(v_0).
    \]
\end{itemize}

The function \( F^* \) here denotes the \emph{convex conjugate} of a \emph{convex l.s.c. function} \( F : Y \to (-\infty, +\infty] \), defined by
\[
F^*(y) = \sup_{v \in Y} \left\{ \langle y, v \rangle - F(v) \right\}.
\]

Example: \(f : \mathbb{R} \to (-\infty, +\infty] \), \(f(y) = \frac{y^2}{2}\)
\[
\begin{aligned}
f^*(y) &= \sup_{z \in \mathbb{R}} \left\{ \langle y, z \rangle - f(z) \right\} \\
       &= \sup_{z \in \mathbb{R}} \left\{ yz - \frac{z^2}{2} \right\} \\
       &= \sup_{z \in \mathbb{R}} \left\{z(y - \frac{z}{2}) \right\} \quad \text{(by taking derivative w.r.t. } z \text{)} \\
       &= y(y - \frac{y}{2}) \\
       &= \frac{y^2}{2}
\end{aligned}
\]

This saddle-point problem can be seen as the primal-dual formulation of the following primal problem:
\begin{equation}
    \min_{x \in X} \; F(Kx) + G(x),
\end{equation}
whose dual takes the form
\begin{equation}
    \max_{y \in Y} \; -G^*(-K^* y) - F^*(y),
\end{equation}
where \( K^* : Y \to X \) is the adjoint of \( K \). (i.e. for any \((x,y) \in X \oplus Y \), we have \(\langle Kx, y \rangle_Y = \langle x, K^* y \rangle_X\)). When \(X\) and \(Y\) are both real vector space, \(K^* = K^\top \).

For a \emph{convex} funciton \(f : \mathbb{R}^n \to (-\infty, +\infty]\), and \(x \in dom(f)\), the \emph{subgradient} of \(f\) at \(x\) is a set: \(\partial f(x) = \left\{ w \in \mathbb{R}^n|f(z) \ge f(x) + \langle w, z-x \rangle, \forall z \in \mathbb{R}^n \right\}\).
When \(f\) is differentiable at \(x\), we have \(\partial f(x) = \left\{ \bigtriangledown f(x) \right\}\)

Example: \(f : \mathbb{R} \to (-\infty, +\infty] \), \(f(y) = |y|\)
\[
\partial f(y) = \begin{cases}
1, & \text{if } x > 0\\
[-1,1], & \text{if } x = 0\\
-1, & \text{if } x<0
\end{cases}
\]
\(\partial f(0) = [-1,1]\),since \(\forall w \in \partial f(0), z \in \mathbb{R}\):
\[
\begin{aligned}
\\(|z| \ge|x|+wz-wx \ \& \  x=0 &\implies |z| \ge wz \\
                  &\implies \begin{cases}
\frac{|z|}{z} \ge w, & \text{if } z > 0\\
\frac{|z|}{z} \le w, & \text{if } z < 0
\end{cases}\\
&\implies 1 \ge w \ \& \ -1 \le w \\
&\implies w \in [-1,1]
\end{aligned}
\]

Given a saddle point problem:
\[
\min_{x \in X} \max_{y \in Y} \; \langle Kx, y \rangle + G(x) - F^*(y),
\]
The Lagrangian function \(\Phi(x,y) := \langle Kx, y \rangle + G(x) - F^*(y)\), then saddle point \(( \hat{x},\hat{y} )\) satisfies the following subgradient conditions:

\begin{equation}
    \begin{cases}
K \hat{x} \in \partial F^*(\hat{y})\\
-(K^* \hat{y}) \in \partial G(\hat{x})
\end{cases}
\end{equation}
Since the saddle point \(( \hat{x},\hat{y} )\) satisfies:
\[
\Phi(\hat{x},y) \le \Phi(\hat{x},\hat{y}) \le \Phi(x,\hat{y})
\]

For the right hand inequality, fixed \( \hat{y}\), 
\[
\hat{x} \textbf{ minimizes}: \Phi(x,\hat{y}) = \langle Kx, \hat{y} \rangle + G(x) - F^*(\hat{y}),\\
\]
\[
\begin{aligned}
\hat{x} &= {\arg \min}_{x \in X}{\{ \langle Kx, \hat{y} \rangle + G(x) - F^*(\hat{y})} \}\\
&= {\arg \min}_{x \in X}{\{ \langle x, k^* \hat{y} \rangle + G(x)} \}, by \  \langle Kx, y \rangle_Y = \langle x, K^* y \rangle_X
\end{aligned}
\]

For the left hand inequality, fixed \( \hat{x}\), 
\[
\hat{y} \textbf{ maximizes}: \Phi(\hat{x},y) = \langle K\hat{x}, y \rangle + G(\hat{x}) - F^*(y),\\
\]

\[
\begin{aligned}
\hat{y} &= {\arg \max}_{y \in Y}{\{ \langle K\hat{x}, y \rangle + G(\hat{x}) - F^*(y)} \}\\
&= {\arg \max}_{y \in Y}{\langle K\hat{x}, y \rangle - F^*(y)} \}\\
&= {\arg \min}_{y \in Y}{\langle (-K)\hat{x}, y \rangle + F^*(y)} \}
\end{aligned}
\]

For a convex function \( H : \mathbb{R}^n \to \mathbb{R} \cup \{\infty\},\hat{x}\) is global minimizer if and only if \(0 \in \partial H (\hat{x})\).

If \(\hat{x}\) is minimizer, then \(\forall z \in \mathbb{R}^n, H(z) \ge H(\hat{x}), \implies H(z) \ge H(\hat{x}) + \langle 0, z-\hat{x}\rangle \implies 0 \in \partial H(\hat{x})\).

If \(0 \in \partial H(\hat{x})\), then \(H(z) \ge H(\hat{x}) + \langle0,z-\hat{x}\rangle = H(\hat{x}) \ ,\forall z \in\mathbb{R} \implies\hat{x}\) is a minimizer.

For a function: \(H(x) = \langle x, k^* \hat{y} \rangle + G(x)\), \(H(x)\) reaches its minimum at \(\hat{x} \iff 0 \in \partial H(\hat{x})\). 
\[
\begin{aligned}
\partial H(x) &= \partial (\langle x, K^* \hat{y} \rangle ) + \partial G(x) \\
&= \partial (K^* \hat{y} \ x^\top  ) + \partial G(x) \\
&= K^* \hat{y} + \partial G(x)
\end{aligned}
\]
Hence \( 0 \in \partial H(\hat{x}) \iff -K^*\hat{y} \in \partial G(\hat{x})\).

For a function: \(J(y) = \langle (-K)\hat{x}, y \rangle + F^*(y), J(y)\) reaches its minimum at \(\hat{y} \iff 0 \in \partial J(\hat{y})\) 
\[
\begin{aligned}
\partial J(x) &= \partial (\langle (-K)\hat{x}, y \rangle ) + \partial F^*(y) \\
&= \partial ((-K)\hat{x} \ y^\top  ) + \partial F^*(y) \\
&= (-K)\hat{x} + \partial F^*(y)
\end{aligned}
\]
Hence \( 0 \in \partial J(\hat{y})) \iff K\hat{x} \in \partial F^*(\hat{y})\).

We generally assume that the problem admits at least one primal-dual solution \((\hat{x}, \hat{y}) \in X \oplus Y\), satisfying the optimality conditions
\[
K \hat{x} \in \partial F^*(\hat{y}), \quad -K^* \hat{y} \in \partial G(\hat{x})
\]

The definition of Proximal Operator (resolvent operator):
\[
Prox_{\tau F}(y) := \arg\min_{x}\left\{ \frac{1}{2\tau}\|x - y\|^2 + F(x) \right\}
\]
Claim: \(Prox_{\tau F}(y) = (I + \tau \partial F)^{-1}(y)\).

\begin{proof}: Let \(x^* = \arg\min_{x}\left\{ \frac{1}{2\tau}\|x - y\|^2 + F(x) \right\}\), then \begin{align*}
0 &\in \partial \left(\frac{1}{2\tau}\|x - y\|^2 + F(x)\right)_{x=x^*} \\
  &\implies 0 \in \left(\frac{x - y}{\tau} + \partial F(x)\right)\big|_{x = x^*} \\
  &\implies 0 \in x^* - y + \tau \partial F(x^*) \\
  &\implies y \in x^* + \tau \partial F(x^*) = (I + \tau \partial F)(x^*) \\
  &\implies (I + \tau \partial F)^{-1}(y) = x^* \\
  &\implies Prox_{\tau F}(y) := \arg\min_{x}\left\{ \frac{1}{2\tau}\|x - y\|^2 + F(x) \right\} = (I + \tau \partial F)^{-1}(y) \ \ \ 
\end{align*}\end{proof}

Moreau's identity (\(\tau > 0\)): 
\[
\begin{aligned}
x = (I + \tau \partial F)^{-1}(x) + \tau(I + \frac{1}{\tau} \partial F^*)^{-1}(\frac{x}{\tau}) \\
\iff x = Prox_{\tau F}(x)+ \tau Prox_{\frac{1}{\tau}F^*}(\frac{x}{\tau}) \ \ \ \ \ \ \ \ \ \ \ \ \ 
\end{aligned}
\]
\begin{proof}

Let \(p = Prox_{\tau F}(x) = \arg\min_{z}\left\{ \frac{1}{2\tau}\|z - x\|^2 + F(z) \right\} \implies 0 \in \frac{p-x}{\tau} + \partial F(p) \implies x-p \in \tau \partial F(p)\)

\(F^*(y) =  \sup_{z} \left\{ \langle y, z \rangle - F(z) \right\}, \textbf{using the property of subgradient:} y \in \partial F(z) \iff z \in F^*(y), \implies \left\{x-p \in \tau \partial F(p) \iff p \in \partial F^*(\frac{x-p}{\tau})\right\}\)

Let \(q = Prox_{\frac{1}{\tau}F^*}(\frac{x}{\tau}) = \arg\min_{y}\left\{ \frac{\tau}{2}\|y - \frac{x}{\tau}\|^2 + F^*(y) \right\} \implies 0 \in \tau (q-\frac{x}{\tau}) + F^*(q)) \implies x-\tau q \in \partial F^*(q)\)

Since: 
\[
\begin{cases}
p \in \partial F^*\left(\frac{x-p}{\tau}\right)\\
x-\tau q\in \partial F^*(q)
\end{cases}
\]

Let \(v = \frac{x-p}{\tau}\). Then we have:
\[
p \in \partial F^*(v) \quad \text{and} \quad p = x - \tau v
\]
which implies:
\[
x - \tau v \in \partial F^*(v)
\]

Notice that both \(v\) and \(q\) satisfy the same inclusion:
\[
x - \tau \cdot \in \partial F^*(\cdot)
\]

To prove uniqueness, suppose there are two solutions $u_1$ and $u_2$ satisfying:
\[
x - \tau u_1 \in \partial F^*(u_1) \quad \text{and} \quad x - \tau u_2 \in \partial F^*(u_2)
\]
By definition of subgradient:
\[
F^*(z) \geq F^*(u_1) + \langle x - \tau u_1, z - u_1 \rangle \quad \forall z
\]
\[
F^*(z) \geq F^*(u_2) + \langle x - \tau u_2, z - u_2 \rangle \quad \forall z
\]
Set $z = u_2$ in the first and $z = u_1$ in the second:
\[
F^*(u_2) \geq F^*(u_1) + \langle x - \tau u_1, u_2 - u_1 \rangle
\]
\[
F^*(u_1) \geq F^*(u_2) + \langle x - \tau u_2, u_1 - u_2 \rangle
\]
Adding these inequalities:
\[
F^*(u_2) + F^*(u_1) \geq F^*(u_1) + F^*(u_2) + \langle x - \tau u_1, u_2 - u_1 \rangle + \langle x - \tau u_2, u_1 - u_2 \rangle
\]
Simplify the rightmost terms:
\[
\begin{aligned}
&\langle x - \tau u_1, u_2 - u_1 \rangle + \langle x - \tau u_2, u_1 - u_2 \rangle \\
&= \langle x - \tau u_1, u_2 - u_1 \rangle - \langle x - \tau u_2, u_2 - u_1 \rangle \\
&= \langle (x - \tau u_1) - (x - \tau u_2), u_2 - u_1 \rangle \\
&= \langle -\tau(u_1 - u_2), u_2 - u_1 \rangle \\
&= -\tau \langle u_1 - u_2, u_2 - u_1 \rangle \\
&= -\tau \langle u_1 - u_2, -(u_1 - u_2) \rangle \\
&= \tau \| u_1 - u_2 \|^2
\end{aligned}
\]
Thus:
\[
0 \geq \tau \| u_1 - u_2 \|^2
\]
Since $\tau > 0$ and $\| u_1 - u_2 \|^2 \geq 0$, we must have $\| u_1 - u_2 \|^2 = 0$, so $u_1 = u_2$. Therefore the solution is unique.

This inclusion has a unique solution. Therefore:
\[
v = q
\]

Thus \(p = x - \tau v = x - \tau q\) and:
\[
\begin{aligned} 
x &= p + \tau q\\
&= Prox_{\tau F}(x) + \tau Prox_{\frac{1}{\tau}F^*}\left(\frac{x}{\tau}\right) \\
&= (I + \tau \partial F)^{-1}(x) + \tau\left(I + \frac{1}{\tau} \partial F^*\right)^{-1}\left(\frac{x}{\tau}\right) \quad 
\end{aligned}
\]
\end{proof}
Throughout the paper we will assume that \(F^*\) and \(G\)  are "\emph{simple}", in the sense that their resolvent operator defined through:
\[
x = (I + \tau\partial F)^{-1}(y) = {\arg \min}_{x \in X}{ \left\{ \frac{||x-y||^2}{2\tau} + F(x) \right\}}
 \]
has a closed form representation (or can be efficiently solved up to a high precision, e.g. Using a Newton method in low dimension). 

\bigskip
\noindent\textbf{Remark.}  
The definitions and propositions presented here are adapted from Rockafellar's \emph{Convex Analysis}~\cite{rockafellar1970convex}, specifically those parts relevant to understanding the saddle-point formulation and the optimality conditions employed in our algorithm. All statements are rephrased in our own words, with notation aligned to the rest of the paper.

\ifthenelse{\boolean{@twoside}}{\myclearpage}{}
\chapter{Dual Problem Derivation via Lagrangian Duality}\label{App:DualDerivation}

\noindent\textbf{Remark on notation:} 
The matrices \(K\) and \(K'\) used in this appendix differ from the \(K\) defined in the main text. 
Specifically, in this appendix and in later appendices, we set
\[
K = \begin{pmatrix} -G \\ -A \end{pmatrix},  
\quad 
K' = \begin{pmatrix} G \\ A \end{pmatrix}.
\]
This choice is made to simplify the derivations and to eliminate certain ambiguities in our later proofs and embeddings.

\bigskip
\noindent\textbf{Remark on source:}  
The notation and some definitions in this appendix follow those in \emph{Numerical Linear Algebra and Optimization} by Gill, Murray, and Wright~\cite{gill2021numerical}, which provides the standard form of the dual linear program.  
Here, we re-derive the dual problem for our specific LP formulation, adapting their approach to match the problem structure and sign conventions used in our work.

\section{General Formulation}

Any \textbf{Linear Programming} (LP) problem can be expressed in the form:
\[
\begin{aligned}
    \min_{x \in \mathbb{R}^n} \quad & c^\top x \\
    \text{subject to:} \quad & Gx \ge h, \\
    & Ax = b, \\
    & l \le x \le u,
\end{aligned}
\]
where 
\[
\begin{aligned}
&x \in \mathbb{R}^n, \quad 
G \in \mathbb{R}^{m_1 \times n} \ \text{(inequality constraints)}, \quad
A \in \mathbb{R}^{m_2 \times n} \ \text{(equality constraints)}, \\
&c \in \mathbb{R}^n \ \text{(cost vector)}, \quad
h \in \mathbb{R}^{m_1}, \quad b \in \mathbb{R}^{m_2}, \\
&l \in \bigoplus_{k=1}^{n} \left\{ \mathbb{R} \cup \{-\infty\} \right\}, \quad
u \in \bigoplus_{k=1}^{n} \left\{ \mathbb{R} \cup \{\infty\} \right\}.
\end{aligned}
\]
We allow $l_i = -\infty$ and $u_j = \infty$ to indicate an unbounded variable.  
This formulation is referred to as the \textbf{primal problem}.

\section{Lagrangian Construction}
Let $y_{\mathrm{ineq}} \in \bigoplus_{k=1}^{m_1} \mathbb{R}^+$ be the multipliers for $Gx \ge h$, and  
$y_{\mathrm{eq}} \in \bigoplus_{k=1}^{m_2} \mathbb{R}$ be the multipliers for $Ax = b$.  

Combine them into 
\[
y = \begin{pmatrix} y_{\mathrm{ineq}} \\ y_{\mathrm{eq}} \end{pmatrix} \in \bigoplus_{k=1}^{m_1+m_2} \mathbb{R}_k, 
\quad \mathbb{R}_k =
\begin{cases}
\mathbb{R}^+, & k \le m_1, \\
\mathbb{R}, & k > m_1.
\end{cases}
\]

Define
\[
K' = \begin{pmatrix} G \\ A \end{pmatrix}, 
\quad q = \begin{pmatrix} h \\ b \end{pmatrix},
\]
so that $K' x = \begin{pmatrix} Gx \\ Ax \end{pmatrix}$ and $q^\top y = h^\top y_{\mathrm{ineq}} + b^\top y_{\mathrm{eq}}$.

The \textbf{Lagrangian function} is:
\[
\begin{aligned}
\mathcal{L}(x, y) 
&= c^\top x - y_{\mathrm{ineq}}^\top (Gx - h) - y_{\mathrm{eq}}^\top (Ax - b) \\
&= c^\top x - y^\top (K' x - q) \\
&= c^\top x - y^\top K' x + q^\top y.
\end{aligned}
\]

\section{Dual Function Derivation}
The dual function is obtained by minimizing $\mathcal{L}(x, y)$ over $x$ subject to $l \le x \le u$:
\[
g(y) = \min_{l \le x' \le u} \mathcal{L}(x', y)
      = q^\top y + \min_{l \le x' \le u} \left\{ (c - K'^\top y)^\top x' \right\}.
\]
Let $\lambda = c - K'^\top y \in \mathbb{R}^n$. Since the constraints are separable,
\[
\min_{l \le x' \le u} \lambda^\top x' 
= \sum_{i=1}^n \min_{l_i \le x'_i \le u_i} \lambda_i x'_i.
\]

Each term satisfies:
\[
\min_{l_i \le x'_i \le u_i} \lambda_i x'_i =
\begin{cases}
\lambda_i l_i, & \lambda_i > 0, \\
\lambda_i u_i, & \lambda_i < 0, \\
0, & \lambda_i = 0.
\end{cases}
\]
Defining $\lambda^+_i = \max\{0, \lambda_i\}$ and $\lambda^-_i = \min\{0, \lambda_i\}$, this becomes
\[
\min_{l_i \le x'_i \le u_i} \lambda_i x'_i = \lambda_i^+ l_i + \lambda_i^- u_i.
\]
Summing over $i$:
\[
\min_{l \le x' \le u} \lambda^\top x' = l^\top \lambda^+ + u^\top \lambda^-.
\]
Thus,
\[
g(y) = q^\top y + l^\top \lambda^+ + u^\top \lambda^-, \quad \lambda = c - K'^\top y.
\]

\section{Feasibility Conditions for $\lambda$}
For $g(y)$ to be finite, $\lambda$ must satisfy:
\[
\Lambda_i =
\begin{cases}
\{0\}, & l_i = -\infty, \ u_i = \infty, \\
\mathbb{R}^-, & l_i = -\infty, \ u_i \in \mathbb{R}, \\
\mathbb{R}^+, & l_i \in \mathbb{R}, \ u_i = \infty, \\
\mathbb{R}, & \text{otherwise}.
\end{cases}
\]
Let $\Lambda = \bigoplus_{i=1}^n \Lambda_i$.

\section{Dual Problem}
The dual problem is:
\[
\begin{aligned}
\max_{y \in \mathbb{R}^{m_1+m_2},\ \lambda \in \mathbb{R}^n} \quad 
& q^\top y + l^\top \lambda^+ + u^\top \lambda^- \\
\text{subject to:} \quad 
& \lambda = c - K'^\top y, \\
& y_{1:m_1} \ge 0, \\
& \lambda \in \Lambda.
\end{aligned}
\]

\ifthenelse{\boolean{@twoside}}{\myclearpage}{}
\def\proj{\textnormal{proj}}

\chapter{Embedding Linear Programming into the General Saddle-point Framework}\label{appendixC}

\noindent\textbf{Acknowledgement:} 
We thank the work of Chambolle and Pock~\cite{chambolle2011first} for providing the Primal-Dual Hybrid Gradient (PDHG) method, which forms the basis of our algorithmic framework. 
We also acknowledge Applegate \emph{et al.}~\cite{applegate2} for their formulation of the PDHG for Linear Programming (PDLP) approach, which inspired the structure used here.

In this section, we demonstrate how the Linear Programming problem can be embedded into the general saddle-point framework. This connection is crucial for applying the Primal-Dual Hybrid Gradient (PDHG) algorithm to Linear Programming problems.

Recall the primal Linear Programming Formulation:
\[
\begin{aligned}
    \min_{x \in \mathbb{R}^n} \; &c^\top x\\
    \text{subject to: } &Gx\ge h \\
    &Ax = b\\
    &l\le x \le u
\end{aligned}
\]
where \(x \in \mathbb{R}^n,G \in \mathbb{R}^{m_1 \times n}\), \(A \in \mathbb{R}^{m_2 \times n}\), \(c \in \mathbb{R}^n\), \(h \in \mathbb{R}^{m_1}, b \in \mathbb{R}^{m_2}, l \in \displaystyle\bigoplus_{k=1}^{n}\left\{\mathbb{R} \cup \{-\infty\} \right\}, u \in \displaystyle\bigoplus_{k=1}^{n}\left\{\mathbb{R} \cup \{\infty\} \right\}  \).

and the general saddle point problem:
\[
\min_{x \in X} \max_{y \in Y} \; \langle Kx, y \rangle + G(x) - F^*(y),
\]
where \(X,Y\) are real vector spaces, \( K : X \to Y \) is a linear operator, the functions \( G : X \to [0, +\infty] \) and \( F^* : Y \to [0, +\infty] \) are assumed to be \emph{proper}, \emph{convex}, and \emph{lower semicontinuous (l.s.c.)}. 

\textbf{Primal Space:} \(X = \mathbb{R}^n\)

\textbf{Dual Space:} \(Y = \mathbb{R}^{m_1+m_2}\)

\textbf{Linear Operator:} Define \( K : X \to Y \) as:
\[
Kx = \begin{pmatrix}
    -Gx\\-Ax
\end{pmatrix}
\]

The adjoint operator \( K^* : Y \to X \) is given by \(K^*y=K^\top y=-G^\top y_{ineq}-A^\top y_{eq},\textit{ for } y =(y_{ineq},y_{eq})\in \mathbb{R}^{m_1+m_2} \)

\textbf{Define G(x),} G(x) captures the primal objective and box constraints:
\[
G(x) = c^\top x + \delta_{l \le x \le u}(x)
\]
where \(\delta_A(x) \) is the \text{indicator function}, 
\[
\delta_A(x) = \begin{cases}
    0, \text{if } x\in A \\
    \infty, \text{otherwise}
    
\end{cases}
\]

\textbf{Define} \( \mathbf{F^*(y)}, F^*(y)\) handle dual constraints and constant terms from primal constraints:
\[
F^*(y) = -q^\top y + \delta_{y_{ineq \ge 0}}(y)
\]
where \(q=\begin{pmatrix}
    h\\b
\end{pmatrix}\) (combine primal constraint constants).

Substituting \(G(x)\) and \(F^*(y)\) into the general framework:
\[
\begin{aligned}
    &\min_{x \in X} \max_{y \in Y} \; \langle Kx, y \rangle + G(x) - F^*(y)\\
    &= \min_{x \in X} \max_{y \in Y} \; \left\langle \begin{pmatrix}
        -Gx\\
        -Ax
    \end{pmatrix}, \begin{pmatrix}
        y_{ineq}\\
        y_{eq}
    \end{pmatrix} \right\rangle + c^\top x + \delta_{l\le x\le u}(x) +q^\top y - \delta_{y_{ineq} \ge0}(y)\\
    &= \min_{x \in X} \max_{y \in Y} \; -y^\top  \begin{pmatrix}
        Gx\\
        Ax
    \end{pmatrix} + c^\top x + \delta_{l\le x\le u}(x) +q^\top y - \delta_{y_{ineq} \ge0}(y)\\
    &= \min_{x \in X} \max_{y \in Y} \; -y^\top  K'x + c^\top x + \delta_{l\le x\le u}(x) +q^\top y - \delta_{y_{ineq} \ge0}(y)\\
    &= \min_{x \in X} \max_{y \in Y} \; -y^\top  K'x + c^\top x + \delta_{l\le x\le u}(x) +y^\top q - \delta_{y_{ineq} \ge0}(y)\\
    &= \min_{x \in X} \max_{y \in Y} \; c^\top x - y^\top (K'x-q) + \delta_{l\le x\le u}(x)- \delta_{y_{ineq} \ge0}(y)\\
\end{aligned}
\]
where \(K'x=\begin{pmatrix}
    Gx\\
    Ax
\end{pmatrix}\), this formulation implicitly enforces: \textbf{Primal feasibility:} \(Gx \ge h, Ax = b, l\le x \le u\), and \textbf{Dual feasibility:} \(y_{ineq} \ge 0\).

For practical use, we introduce the partial primal-dual gap:
\[
\begin{aligned}
    \mathcal{G}_{B_1 \oplus B_2}(x,y) = &\max_{y' \in B_2}\left\{ \langle y',Kx \rangle - F^*(y')+G(x)\right\} - \min_{x' \in B_1}\left\{ \langle y,Kx' \rangle - F^*(y)+G(x')\right\} \\
    =&\max_{y' \in B_2}\left\{ -\langle y',K'x \rangle +q^\top y' - \delta_{y_{ineq} \ge0}(y
    ) + c^\top x + \delta_{l\le x\le u}(x)\right\} \\
    &- \min_{x' \in B_1}\left\{ -\langle y,K'x' \rangle +q^\top y - \delta_{y_{ineq} \ge0}(y)+ c^\top x' + \delta_{l\le x\le u}(x')\right\} 
\end{aligned}
\]

\noindent \textbf{Algorithm 1}
\begin{itemize}
    \item \textbf{Initialization:} Choose \(\tau, \sigma > 0, \theta \in[0,1], (x^0,y^0) \in X \oplus Y,\) and set \(\bar{x}^0 = x^0\).
    \item \textbf{Iterations (\(n \ge 0\))} Update \(x^{n+1},y^{n+1},\bar{x}^{n+1}\) as follows:
\end{itemize}
\[
\left\{
\begin{aligned}
    x^{n+1} &= (I + \tau \partial G)^{-1} (x^n - \tau K^* y^n)\\
    \bar{x}^{n+1} &= x^{n+1}+\theta(x^{n+1}-x^n)\\
    y^{n+1} &= (I + \sigma \partial F^*)^{-1}(y^n+\sigma K \bar{x}^{n+1})
\end{aligned}
\right\}
\]

\noindent \textbf{Algorithm 1 for LP}
From the section 6, we define \(G(x) = c^\top x + \delta_{l \le x \le u}(x)\), and \(F^*(y) = -q^\top y + \delta_{y_{ineq \ge 0}}(y)\). Substituting \(G(x)\) and \(F^*(y)\) into Algorithm:
\[
\begin{aligned}
    (I + \tau \partial G)^{-1}(v) &=  \arg\min_{x}\left\{ \frac{1}{2\tau}\|x - v\|^2 + G(x) \right\}\\
    &= \arg\min_{x}\left\{ \frac{1}{2\tau}\|x - v\|^2 + c^\top x + \delta_{l \le x \le u}(x) \right\}\\
    &= \arg\min_{l\le x \le u}\left\{ \frac{1}{2\tau}\|x - v\|^2 + c^\top x \right\}, \textit{since  x $<$ l or x $>$ u, we have }\delta_{l \le x \le u}(x) = \infty\\
\end{aligned}
\]
Taking the derivative of the above objective function (\( \frac{1}{2\tau}\|x - v\|^2 + c^\top x\)), ignoring the box constraints (\(l \le x \le u\)) for now, and setting it equal to zero yields the first order optimality condition:
\[
\nabla_x(\frac{1}{2\tau}\|x - v\|^2 + c^\top x) = \frac{x-v}{\tau} + c = 0
\implies x = v-c\tau
\]
Thus, we obtain the \textbf{unconstrained} minimizer: \(x^{\#} = v-c\tau\).

If \(l \le x^{\#} \le u\), then it's optimal solution. Otherwise, if some components of \(x^{\#}\) fall outside the interval, the optimal solution will lie on the boundary. Intuitively, this minimization problem asks for a point \(x\) with in the box constraints \(\displaystyle\bigoplus_{k=1}^{n}\left\{[l_k,u_k]\right\}\) that is as close as possible to \(v-c\tau\), since the linear term \(c^\top x\) merely shifts the unconstrained minimizer from \(v\) to \(v - c\tau\). Therefore, in the constrained case, the solution is obtained by clipping \(x^{\#} = v-c\tau\) to the interval \(\displaystyle\bigoplus_{k=1}^{n}\left\{[l_k,u_k]\right\}\) component-wise. That is projecting each coordinates of \(x^{\#}\) onto the interval\([u_k,l_k]\). In other words, 
\[
x^* = \proj_{\mathcal{X}}(v-c\tau), \text{where } \mathcal{X} = \displaystyle\bigoplus_{k=1}^{n}\left\{[l_k,u_k]\right\}
\]
where \(\proj_{\mathcal{X}}\) denotes the component-wise projection operator that restricts each coordinate \(k\) to lie within the interval \([l_k,u_k]\). This is a well-known result: the proximal operator of a indicator function is the projection operator.

Hence, 
\[
(I + \tau \partial G)^{-1}(v) = \proj_{\mathcal{X}}(v-c\tau)
\]

Let \(v = x^n-\tau K^*y^n\). Use the result above, we obtain:
\[
\begin{aligned}
    x^{n+1} &= (I + \tau \partial G)^{-1}(x^n-\tau K^*y^n)\\
    &= \proj_{\mathcal{X}}(x^n-\tau K^*y^n-c\tau) \\
    &= \proj_{\mathcal{X}}(x^n-\tau(c+ K^*y^n))
\end{aligned}
\]

Similarly, we analyze the proximal operator for the dual step, defined by:
\[
\begin{aligned}
    (I + \sigma \partial F^*)^{-1}(w) &=  \arg\min_{y}\left\{ \frac{1}{2\sigma}\|y - w\|^2 + F^*(x) \right\}
\end{aligned}
\]

From the earlier definitions, we know:
\[
F^*(y) = -q\top y + \delta_{y_{ineq}\ge0}(y)
\]

Substituting this expression into the proximal operator yields:
\[
\begin{aligned}
    (I + \sigma \partial F^*)^{-1}(w) &=  \arg\min_{y}\left\{ \frac{1}{2\sigma}\|y - w\|^2 -q\top y + \delta_{y_{ineq}\ge0}(y) \right\} \\
    &= \arg\min_{y_{ineq}\ge0}\left\{ \frac{1}{2\sigma}\|y - w\|^2 -q\top y \right\}, \textit{since  y$_{ineq}$ $<$ 0, we have }\delta_{y_{ineq}\ge0}(y) = \infty\\ 
\end{aligned}
\]

This objective function is separable across the components of \(y\), so we can solve each block independently. For the equality constrained \(y_{eq}\), there is no constraint, so we take the gradient and set it to zero:
\[
\nabla_{y_{eq}} (\frac{1}{2\sigma}\|y - w\|^2 -q\top y ) = \frac{y_{eq}-w_{eq}}{\sigma} -q_{eq} = 0
\]

Solve gives:
\[
y_{eq}^* = w_{eq} + \sigma q_{eq}
\]

For the inequality constrained \(y_{ineq}\), we minimize:

\[
\arg\min_{y}\left\{ \frac{1}{2\sigma}\|y_{ineq} - w_{ineq}\|^2 - q\top y_{ineq} \right\}
\]

As in the equality case, the unconstrained minimizer is:
\[
y_{ineq} ^{\#} = w_{ineq} + \sigma q_{ineq}
\]

But due to the constraint \(y_{ineq} \ge 0\), the optimal solution is the projection of this unconstrained minimizer onto the nonnegative interval:
\[
{y_{ineq \ }^*} = \proj_{y_{ineq} \ge 0} (w_{ineq} + \sigma q_{ineq})
\]

Combining both parts, the full solution is:
\[
y^* = \proj_{\mathcal{Y} } (w + \sigma q), \text{where} \mathcal{Y} = \displaystyle\bigoplus_{k=1}^{m_1}\left\{\mathbb{R}^+ \right\} \oplus \displaystyle\bigoplus_{k=1}^{m_2}\left\{\mathbb{R} \right\}
\]

That is, the projection operator applies elementwise truncation to enforce \(y_i \ge0\) for the inequality components,  and leaves the equality components unchanged.

Therefore the proximal operator becomes:
\[
(I + \sigma \partial F^*)^{-1}(w) = \proj_{\mathcal{Y} } (w + \sigma q)
\]

Now substitute this into the PDHG dual update step:
\[
\begin{aligned}
    y^{n+1} &= (I + \sigma \partial F^*)^{-1}(y^n+\sigma K \bar{x}^{n+1}) \\
    &= \proj_{\mathcal{Y} }(y^n+\sigma K \bar{x}^{n+1} + \sigma q) \\
    &= \proj_{\mathcal{Y} }(y^n+\sigma(q +  K \bar{x}^{n+1}))
\end{aligned}
\]

From the definition of K, we know:
\[
K = \begin{pmatrix}
    -G \\
    -A
\end{pmatrix} = -K',
K^* = K^\top = \begin{pmatrix}
    -G^\top,-A^\top 
\end{pmatrix}
\]

Substituting K into the PDHG for Linear Programing:
\[
\begin{aligned}
    x^{n+1} &= (I + \tau \partial G)^{-1} (x^n - \tau K^* y^n)\\
    &= \proj_{\mathcal{X}}(x^n-\tau(c+ K^*y^n)) \\
    &= \proj_{\mathcal{X}}(x^n-\tau(c+ K^\top y^n)) \\
    &= \proj_{\mathcal{X}}(x^n-\tau(c- K'^\top y^n))
\end{aligned}\\
\begin{aligned}
    \ \ \ \ \ \ y^{n+1} &= (I + \sigma \partial F^*)^{-1}(y^n+\sigma K \bar{x}^{n+1}) \\
    &= \proj_{\mathcal{Y} }(y^n+\sigma(q +  K \bar{x}^{n+1})) \\
    &= \proj_{\mathcal{Y} }(y^n+\sigma(q -  K' \bar{x}^{n+1})) \\
    &= \proj_{\mathcal{Y} }(y^n+\sigma(q -  K' [x^{n+1}+\theta(x^{n+1}-x^n)])))
\end{aligned}
\]



    

\ifthenelse{\boolean{@twoside}}{\myclearpage}{}
\chapter{Convergence analysis for \(\theta = 1\)}\label{appendixD}

\noindent\textbf{Remark on Proof Origin:} 
Our convergence proof is adapted from the analysis framework of Chambolle and Pock~\cite{chambolle2011first}, which studies a general form of the Primal-Dual Hybrid Gradient (PDHG) method.  
The variant of PDHG implemented in the PDLP framework of Applegate \emph{et al.}~\cite{applegate2} differs in certain algorithmic details from the original Chambolle--Pock scheme.  
While the original PDHG proof in~\cite{chambolle2011first} does not directly address this LP-oriented variant, we extend and specialize their technique to establish convergence for the PDLP formulation described in Appendix~\ref{App:DualDerivation}.

As soon as \(B_1 \oplus B_2 \subseteq \mathcal{X} \oplus \mathcal{Y}\) contains a saddle point \((\hat{x},\hat{y})\), we have:

\begin{align*}
    \mathcal{G}_{B_1 \oplus B_2}(x,y) 
    =&\max_{y' \in B_2}\left\{ -\langle y',K'x \rangle +q^\top y' - \delta_{y_{ineq} \ge0}(y
    ) + c^\top x + \delta_{l\le x\le u}(x)\right\} \\
    &- \min_{x' \in B_1}\left\{ -\langle y,K'x' \rangle +q^\top y - \delta_{y_{ineq} \ge0}(y)+ c^\top x' + \delta_{l\le x\le u}(x')\right\} \\
    =&\max_{y' \in B_2}\left\{ -\langle y',K'x \rangle +q^\top y' + c^\top x \right\} - \min_{x' \in B_1}\left\{ -\langle y,K'x' \rangle +q^\top y (y)+ c^\top x' \right\} \\
    =&\max_{y' \in B_2}\left\{ \mathcal{L}(x,y') \right\} - \min_{x' \in B_1}\left\{ \mathcal{L}(x',y) \right\} \\
    \ge& \mathcal{L}(x,\hat{y}) - \mathcal{L}(\hat{x},y) \\
    \ge& 0
\end{align*}

and it vanishes only if \((x,y)\) is itself a saddle point.
\section{Proof of Convergence for LP}
Algorithm 1 when applied to the general LP formulation becomes:
\begin{align*}
    x^{n+1} &= \proj_{\mathcal{X}}(x^n-\tau(c- K'^\top y^n))\\
    y^{n+1} &= \proj_{\mathcal{Y} }(y^n+\sigma[q -  K' (2x^{n+1}-x^n)])
\end{align*}
for some $\tau,\sigma >0$, where $\mathcal{X} = \displaystyle\bigoplus_{k=1}^{n}\left\{[l_k,u_k]\right\}$ and $\mathcal{Y} = \displaystyle\bigoplus_{k=1}^{m_1}\left\{\mathbb{R}^+ \right\} \oplus \displaystyle\bigoplus_{k=1}^{m_2}\left\{\mathbb{R} \right\}$. Sets $\mathcal{X}$ and $\mathcal{Y}$ are closed and convex so these projections are well defined by the Hilbert projection theorem. We will first show that this algorithm does converge to a saddle-point, given a minor condition on the step sizes.
\begin{theorem}
    Assume there exists a saddle point $(\hat{x},\hat{y})$ of the Lagrangian and let $\tau,\sigma >0$ be such that $\tau\sigma\|K'\|^2<1$. Then for $x^n$ and $y^n$ defined by the algorithm above, there exists a saddle point $(x^*,y^*)$ with $x^n \to x^*$ and $y^n \to y^*$.
\end{theorem}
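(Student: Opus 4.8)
The plan is to adapt the energy (Lyapunov) argument of Chambolle--Pock~\cite{chambolle2011first} to the specialized iteration above, in the case $\theta=1$. Since $\mathcal{X}$ and $\mathcal{Y}$ are closed and convex, each update is the shifted projection established in Appendix~\ref{appendixC}, so its defining optimality condition reads: for every $x\in\mathcal{X}$ and $y\in\mathcal{Y}$,
\[
\big\langle x^{n+1}-x^n+\tau(c-K'^\top y^n),\;x-x^{n+1}\big\rangle\ \ge\ 0,
\qquad
\big\langle y^{n+1}-y^n-\sigma(q-K'\bar{x}^{n+1}),\;y-y^{n+1}\big\rangle\ \ge\ 0,
\]
with $\bar{x}^{n+1}=2x^{n+1}-x^n$. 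Reading these as subgradient inequalities for $G(x)=c^\top x+\delta_{\mathcal{X}}(x)$ and $F^*(y)=-q^\top y+\delta_{\mathcal{Y}}(y)$, adding them, and applying the three-point identity $2\langle a-b,a-c\rangle=\|a-b\|^2+\|a-c\|^2-\|b-c\|^2$ to the squared terms, I would obtain a one-step estimate valid for every $(x,y)\in\mathcal{X}\times\mathcal{Y}$:
\[
\tfrac{\|x-x^{n+1}\|^2}{2\tau}+\tfrac{\|y-y^{n+1}\|^2}{2\sigma}+\big[\mathcal{L}(x^{n+1},y)-\mathcal{L}(x,y^{n+1})\big]
\ \le\ \tfrac{\|x-x^{n}\|^2}{2\tau}+\tfrac{\|y-y^{n}\|^2}{2\sigma}-\tfrac{\|x^{n+1}-x^{n}\|^2}{2\tau}-\tfrac{\|y^{n+1}-y^{n}\|^2}{2\sigma}+R_n,
\]
where $R_n$ is a bilinear remainder of the form $\langle K'(x^{n+1}-x^n),\,y^{n+1}-y\rangle$ coming from the extrapolation term.

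Summing this estimate from $n=0$ to $N-1$, the portion $\langle K'(x^{n+1}-x^n),\,y^n-y\rangle$ of $R_n$ telescopes, leaving one boundary term $\langle K'(x^N-x^{N-1}),\,y^N-y\rangle$ together with terms $\langle K'(x^{n+1}-x^n),\,y^{n+1}-y^n\rangle$; each of the latter is bounded by Cauchy--Schwarz and Young's inequality via $\|K'\|\,\|x^{n+1}-x^n\|\,\|y^{n+1}-y^n\|$, and the hypothesis $\tau\sigma\|K'\|^2<1$ is precisely what lets these be absorbed into the negative terms $-\|x^{n+1}-x^n\|^2/2\tau$ and $-\|y^{n+1}-y^n\|^2/2\sigma$ with a strictly positive surplus, the boundary term being split likewise against $\|x-x^N\|^2/2\tau$ and $\|y-y^N\|^2/2\sigma$. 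The outcome is: there exist constants $\theta_1,\theta_2,\gamma_1,\gamma_2>0$ depending only on $\tau\sigma\|K'\|^2$ such that, for all $N$,
\[
\sum_{n=0}^{N-1}\big[\mathcal{L}(x^{n+1},y)-\mathcal{L}(x,y^{n+1})\big]
+\theta_1\tfrac{\|x-x^{N}\|^2}{2\tau}+\theta_2\tfrac{\|y-y^{N}\|^2}{2\sigma}
+\sum_{n=0}^{N-1}\big(\gamma_1\|x^{n+1}-x^n\|^2+\gamma_2\|y^{n+1}-y^n\|^2\big)
\ \le\ \tfrac{\|x-x^{0}\|^2}{2\tau}+\tfrac{\|y-y^{0}\|^2}{2\sigma}.
\]

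Taking $(x,y)=(\hat{x},\hat{y})$ and using that $\mathcal{L}(x^{n+1},\hat{y})-\mathcal{L}(\hat{x},y^{n+1})\ge 0$ — the gap is nonnegative at a saddle point, as in the computation just before this theorem — every term on the left is nonnegative; hence $\{(x^n,y^n)\}$ is bounded and $\sum_n\|x^{n+1}-x^n\|^2$, $\sum_n\|y^{n+1}-y^n\|^2<\infty$, so the increments $x^{n+1}-x^n$ and $y^{n+1}-y^n$ vanish. Boundedness yields a subsequence $(x^{n_k},y^{n_k})\to(x^*,y^*)\in\mathcal{X}\times\mathcal{Y}$; passing to the limit along it in the two variational inequalities (using lower semicontinuity of $G$, $F^*$ and the vanishing increments) shows $(x^*,y^*)$ satisfies the saddle-point optimality inclusions, so it is itself a saddle point. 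To upgrade this to convergence of the whole sequence I would revisit the one-step estimate with $(x,y)=(x^*,y^*)$: the gap term is again nonnegative, and $R_n\to 0$ since $x^{n+1}-x^n\to 0$ while $\{y^n\}$ is bounded, so a Fej\'er-type quantity $\mathcal{E}_n$ — the weighted distance $\tfrac{\|x^*-x^n\|^2}{2\tau}+\tfrac{\|y^*-y^n\|^2}{2\sigma}$ together with the cross-correction needed to make it exactly non-increasing — converges; since $\mathcal{E}_{n_k}\to 0$ along the chosen subsequence, $\mathcal{E}_n\to 0$, and therefore $x^n\to x^*$ and $y^n\to y^*$.

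The main obstacle is the bookkeeping in the summation step: correctly tracking the bilinear terms produced by the $\theta=1$ extrapolation, getting the telescoping exactly right, and checking that $\tau\sigma\|K'\|^2<1$ really does leave strictly positive coefficients $\theta_1,\theta_2,\gamma_1,\gamma_2$ after all the Young's-inequality absorptions — choosing the precise split constants is the delicate part. Everything else — the variational-inequality description of the projections, the limit passage, and the Opial/Fej\'er argument — is a direct specialization of standard convex-analysis facts (several already recorded in Appendix~\ref{appendixC}) or routine.
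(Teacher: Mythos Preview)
Your overall strategy---the Chambolle--Pock energy estimate, summation, absorption via Young's inequality under $\tau\sigma\|K'\|^2<1$, then boundedness, vanishing increments, a subsequential saddle-point limit, and a Fej\'er argument---is exactly the paper's approach.  The one place your sketch goes wrong is precisely where you flagged the difficulty: the bilinear remainder.  In your grouping of the step as $(x^{n+1},y^{n+1})$, the remainder is \emph{not} a single term $\langle K'(x^{n+1}-x^n),\,y^{n+1}-y\rangle$; the correct residual is
\[
R_n \;=\; \langle K'(x^{n+1}-x^n),\,y-y^{n+1}\rangle \;-\; \langle K'(x^{n+1}-x),\,y^{n+1}-y^n\rangle,
\]
with an extra piece involving $x^{n+1}-x$ that your description omits. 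Moreover, the portion $\langle K'(x^{n+1}-x^n),\,y^n-y\rangle$ does \emph{not} telescope under summation in $n$ (the $y$-index does not line up for cancellation), so the mechanism you describe for producing a single boundary term fails as stated.

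The paper's resolution is a small but decisive re-indexing: it groups each step as $(y^n,x^{n+1})$, writing the $y$-update with $\bar{x}=2x^n-x^{n-1}$ and the $x$-update with $\bar{y}=y^n$.  Because $\bar{y}=y^n$ exactly, the term analogous to your extra piece $\langle K'(x^{n+1}-x),\,y^n-\bar{y}\rangle$ vanishes identically, and the surviving bilinear part becomes the second difference
\[
\langle K'((x^{n+1}-x^n)-(x^n-x^{n-1})),\,y-y^{n}\rangle
\;=\; D_{n+1}-D_n \;+\; \langle K'(x^n-x^{n-1}),\,y^n-y^{n-1}\rangle,
\]
with $D_n=\langle K'(x^n-x^{n-1}),\,y-y^{n-1}\rangle$, which now genuinely telescopes and leaves only increment-times-increment terms to absorb.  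Once you make that re-grouping, the rest of your plan (including the Fej\'er endgame) goes through exactly as you outlined and matches the paper.
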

\begin{proof}
    For convenience, notice that the algorithm can be equivalently expressed as
    \begin{align*}
        y^{n} &= \proj_{\mathcal{Y}}(y^{n-1} + \sigma(q-K'\bar{x}))\\
        x^{n+1} &= \proj_{\mathcal{X}}(x^n-\tau(c-K'^\top \bar{y}))
    \end{align*}
    for some $\bar{x} \in X, \bar{y}\in Y$.
    From the characterization of projections onto closed convex sets, we have that for any $x \in X$ , $y \in Y$ and \(n \in \mathbb{N}\) (\(x^0=x^{-1},y^0=y^{-1}\))
    \[
\left\{
\begin{aligned}
    \langle x^n-\tau(c-K'^\top \bar{y})-x^{n+1}, x -x^{n+1}\rangle &\leq 0 \\
    \langle y^{n-1}+\sigma(q-K'\bar{x})-y^{n}, y -y^{n}\rangle &\leq 0
\end{aligned}
\right\}
\]
    from which it follows that     
\[
\begin{aligned}
&\left\{
\begin{aligned}
\langle x^n - \tau c + \tau K'^\top \bar{y} - x^{n+1},\, x - x^{n+1} \rangle &\leq 0 \\
\langle y^{n-1} + \sigma q - \sigma K' \bar{x} - y^{n},\, y - y^{n} \rangle &\leq 0
\end{aligned}
\right. \\[1ex]
&\Rightarrow
\left\{
\begin{aligned}
\langle x^n - x^{n+1} + \tau K'^\top \bar{y},\, x - x^{n+1} \rangle - \tau c^\top x + \tau c^\top x^{n+1} &\leq 0 \\
\langle y^{n-1} - y^{n} - \sigma K' \bar{x},\, y - y^{n} \rangle + \sigma q^\top y - \sigma q^\top y^{n} &\leq 0
\end{aligned}
\right. \\[1ex]
&\Rightarrow
\left\{
\begin{aligned}
\langle x^n - x^{n+1},\, x - x^{n+1} \rangle + \langle \tau K'^\top \bar{y},\, x - x^{n+1} \rangle - \tau c^\top x + \tau c^\top x^{n+1} &\leq 0 \\
\langle y^{n-1} - y^{n},\, y - y^{n} \rangle - \langle \sigma K' \bar{x},\, y - y^{n} \rangle + \sigma q^\top y - \sigma q^\top y^{n} &\leq 0
\end{aligned}
\right. \\[1ex]
&\Rightarrow
\left\{
\begin{aligned}
\langle x^n - x^{n+1},\, x - x^{n+1} \rangle + \langle \tau K'^\top \bar{y},\, x - x^{n+1} \rangle + \tau c^\top x^{n+1} &\leq \tau c^\top x \\
\langle y^{n-1} - y^{n},\, y - y^{n} \rangle - \langle \sigma K' \bar{x},\, y - y^{n} \rangle - \sigma q^\top y^{n} &\leq -\sigma q^\top y
\end{aligned}
\right. \\[1ex]
&\Rightarrow
\left\{
\begin{aligned}
\frac{1}{\tau} \langle x^n - x^{n+1},\, x - x^{n+1} \rangle + \langle K'^\top \bar{y},\, x - x^{n+1} \rangle + c^\top x^{n+1} &\leq c^\top x \\
\frac{1}{\sigma} \langle y^{n-1} - y^{n},\, y - y^{n} \rangle - \langle K' \bar{x},\, y - y^{n} \rangle - q^\top y^{n} &\leq -q^\top y
\end{aligned}
\right. \\[1ex]
&\Rightarrow
\left\{
\begin{aligned}
c^\top x &\geq c^\top x^{n+1} + \langle x - x^{n+1},\, K'^\top \bar{y} \rangle + \frac{1}{\tau} \langle x^n - x^{n+1},\, x - x^{n+1} \rangle \\
-q^\top y &\geq -q^\top y^{n} - \langle K' \bar{x},\, y - y^{n} \rangle + \frac{1}{\sigma} \langle y^{n-1} - y^{n},\, y - y^{n} \rangle
\end{aligned}
\right.
\end{aligned}
\]

    For any \(u,v \in V\), we have:
    \[
    \langle u,v\rangle = \frac{1}{2}(\|u\|^2 + \|v\|^2 -\|u-v\|^2)
    \]
    
    Since:
    \[
    \begin{aligned}
        \langle u, v \rangle &= \langle u, v -u+u \rangle \\
        &=  \langle u, u \rangle + \langle u, v -u \rangle \\
        &= \|u\|^2 + \langle u + v - v, v -u \rangle \\
        &= \|u\|^2 + \langle u - v , v -u \rangle + \langle v, v -u \rangle \\
        &= \|u\|^2 - \langle v-u , v-u \rangle + \langle v, v \rangle + \langle v,-u \rangle \\
        &= \|u\|^2 - \|v-u\|^2 + \|v\|^2 - \langle u, v \rangle \\
    \end{aligned}
    \implies 2\langle u,v\rangle = \|u\|^2 + \|v\|^2 -\|u-v\|^2
    \]

    With this equality, the above inequalities can be written as:

\begin{align*}
&\left\{
\begin{aligned}
c^\top x &\geq c^\top x^{n+1} + \langle x - x^{n+1},\, K'^\top \bar{y} \rangle + \frac{1}{\tau} \langle x^n - x^{n+1},\, x - x^{n+1} \rangle \\
-q^\top y &\geq -q^\top y^{n} - \langle K' \bar{x},\, y - y^{n} \rangle + \frac{1}{\sigma} \langle y^{n-1} - y^{n},\, y - y^{n} \rangle
\end{aligned}
\right. \\[2ex]
\Rightarrow\quad
&\left\{
\begin{aligned}
c^\top x &\geq c^\top x^{n+1} + \langle x - x^{n+1},\, K'^\top \bar{y} \rangle 
+ \frac{1}{\tau} \left( \frac{\|x^n - x^{n+1}\|^2}{2} + \frac{\|x - x^{n+1}\|^2}{2} - \frac{\|x^n - x\|^2}{2} \right) \\
-q^\top y &\geq -q^\top y^{n} - \langle K' \bar{x},\, y - y^{n} \rangle 
+ \frac{1}{\sigma} \left( \frac{\|y^{n-1} - y^{n}\|^2}{2} + \frac{\|y - y^{n}\|^2}{2} - \frac{\|y^{n-1} - y\|^2}{2} \right)
\end{aligned}
\right. \\[2ex]
\Rightarrow\quad
&\left\{
\begin{aligned}
c^\top x &\geq c^\top x^{n+1} + \langle x - x^{n+1},\, K'^\top \bar{y} \rangle 
+ \frac{\|x^n - x^{n+1}\|^2}{2\tau} + \frac{\|x - x^{n+1}\|^2}{2\tau} - \frac{\|x^n - x\|^2}{2\tau} \\
-q^\top y &\geq -q^\top y^{n} - \langle K' \bar{x},\, y - y^{n} \rangle 
+ \frac{\|y^{n-1} - y^{n}\|^2}{2\sigma} + \frac{\|y - y^{n}\|^2}{2\sigma} - \frac{\|y^{n-1} - y\|^2}{2\sigma}
\end{aligned}
\right. \\[2ex]
\Rightarrow\quad
&\left\{
\begin{aligned}
\frac{\|x^n - x\|^2}{2\tau} &\geq -c^\top x + c^\top x^{n+1} + \langle x - x^{n+1},\, K'^\top \bar{y} \rangle 
+ \frac{\|x^n - x^{n+1}\|^2}{2\tau} + \frac{\|x - x^{n+1}\|^2}{2\tau} \\
\frac{\|y^{n-1} - y\|^2}{2\sigma} &\geq q^\top y - q^\top y^{n} - \langle K' \bar{x},\, y - y^{n} \rangle 
+ \frac{\|y^{n-1} - y^{n}\|^2}{2\sigma} + \frac{\|y - y^{n}\|^2}{2\sigma}
\end{aligned}
\right.
\end{align*}

    Summing both inequalities , it follows that 
\[
\begin{aligned}
\Rightarrow \quad
 \frac{\|y - y^{n-1}\|^2}{2\sigma} + \frac{\|x - x^n\|^2}{2\tau} &\geq [q^\top y + c^\top x^{n+1}] - [q^\top y^{n} + c^\top x] \\
& \quad + \frac{\|x^n - x^{n+1}\|^2}{2\tau} + \frac{\|x - x^{n+1}\|^2}{2\tau} + \frac{\|y^{n-1} - y^{n}\|^2}{2\sigma} + \frac{\|y - y^{n}\|^2}{2\sigma} \\
& \quad + \langle x - x^{n+1},\, K'^\top \bar{y} \rangle - \langle K' \bar{x},\, y - y^{n} \rangle
\end{aligned}
\]

\[
\begin{aligned}
\Rightarrow \quad
 \frac{\|y - y^{n-1}\|^2}{2\sigma} + \frac{\|x - x^n\|^2}{2\tau} &\geq [q^\top y + c^\top x^{n+1}] - [q^\top y^{n} + c^\top x] \\
& \quad + \frac{\|x^n - x^{n+1}\|^2}{2\tau} + \frac{\|x - x^{n+1}\|^2}{2\tau} + \frac{\|y^{n-1} - y^{n}\|^2}{2\sigma} + \frac{\|y - y^{n}\|^2}{2\sigma} \\
& \quad + \langle x,\, K'^\top \bar{y} \rangle - \langle x^{n+1},\, K'^\top \bar{y} \rangle - \langle K' \bar{x},\, y \rangle + \langle K' \bar{x},\, y^{n} \rangle
\end{aligned}
\]

\[
\begin{aligned}
\Rightarrow \quad
 \frac{\|y - y^{n-1}\|^2}{2\sigma} + \frac{\|x - x^n\|^2}{2\tau} &\geq [q^\top y + c^\top x^{n+1}] - [q^\top y^{n} + c^\top x] \\
& \quad + \frac{\|x^n - x^{n+1}\|^2}{2\tau} + \frac{\|x - x^{n+1}\|^2}{2\tau} + \frac{\|y^{n-1} - y^{n}\|^2}{2\sigma} + \frac{\|y - y^{n}\|^2}{2\sigma} \\
& \quad + \langle K' x,\, \bar{y} \rangle - \langle K' x^{n+1},\, \bar{y} \rangle - \langle K' \bar{x},\, y \rangle + \langle K' \bar{x},\, y^{n} \rangle
\end{aligned}
\]

We want to construct \([-\langle K'x^{n+1},y\rangle - (-q^\top y) + c^\top x^{n+1}] - [-\langle K'x,y^{n}\rangle - (-q^\top y^{n}) + c^\top x] \) in the inequality, since we have the Lagrangian function: 
\[
\begin{aligned}
    &[-\langle K'x^{n+1},y\rangle - (-q^\top y) + c^\top x^{n+1}] - [-\langle K'x,y^{n}\rangle - (-q^\top y^{n}) + c^\top x] \\
    &= \mathcal{L}(x^{n+1},y)-\mathcal{L}(x,y^n)
\end{aligned}
\]

So our inequality becomes:
\[
\begin{aligned}
 \frac{\|y - y^{n-1}\|^2}{2\sigma} + \frac{\|x - x^n\|^2}{2\tau} &\geq [-\langle K'x^{n+1},y\rangle - (-q^\top y) + c^\top x^{n+1}] - [-\langle K'x,y^{n}\rangle - (-q^\top y^{n}) + c^\top x] \\
& \quad + \frac{\|x^n - x^{n+1}\|^2}{2\tau} + \frac{\|x - x^{n+1}\|^2}{2\tau} + \frac{\|y^{n-1} - y^{n}\|^2}{2\sigma} + \frac{\|y - y^{n}\|^2}{2\sigma} \\
& \quad + \langle K' x,\, \bar{y} \rangle - \langle K' x^{n+1},\, \bar{y} \rangle - \langle K' \bar{x},\, y \rangle + \langle K' \bar{x},\, y^{n} \rangle + \langle K'x^{n+1},y\rangle - \langle K'x,y^{n}\rangle
\end{aligned}
\]

Inner product part of our inequality:
\[
\begin{aligned}
    &\quad \langle K' x,\, \bar{y} \rangle - \langle K' x^{n+1},\, \bar{y} \rangle - \langle K' \bar{x},\, y \rangle + \langle K' \bar{x},\, y^{n} \rangle + \langle K'x^{n+1},y\rangle - \langle K'x,y^{n}\rangle \\
    &= \langle K' (x-x^{n+1}),\, \bar{y} \rangle + \langle K' (x^{n+1}-\bar{x}),\, y \rangle + \langle K' (\bar{x}-x),\, y^{n} \rangle\\
    &= \langle K' (x-x^{n+1}),\, \bar{y} \rangle + \langle K' (x^{n+1}-\bar{x}),\, y \rangle + \langle K' [(\bar{x} - x^{n+1})-(x-x^{n+1})],\, y^{n} \rangle\\
    &= \langle K' (x-x^{n+1}),\, \bar{y} \rangle + \langle K' (x^{n+1}-\bar{x}),\, y \rangle + \langle K' (\bar{x} - x^{n+1}),\, y^{n} \rangle - \langle K'(x-x^{n+1}),\, y^{n} \rangle\\
    &= \langle K' (x-x^{n+1}),\, \bar{y} -y^{n} \rangle + \langle K' (x^{n+1}-\bar{x}),\, y -y^{n} \rangle\\
    &= \langle K' (x^{n+1} - x),\, y^{n} - \bar{y}\rangle - \langle K' (x^{n+1}-\bar{x}),\,y^{n} -y \rangle
\end{aligned}
\]
Hence our inequality becomes:

    \begin{align*}
    \frac{\|y-y^{n-1}\|^2}{2\sigma}+\frac{\|x-x^n\|^2}{2\tau} \geq& \mathcal{L}(x^{n+1},y)- \mathcal{L}(x,y^n)\\ &+ \frac{\|y-y^{n}\|^2}{2\sigma}+\frac{\|x-x^{n+1}\|^2}{2\tau} + \frac{\|y^{n-1}-y^{n}\|^2}{2\sigma}+\frac{\|x^n-x^{n+1}\|^2}{2\tau}\\ &+ \langle K' (x^{n+1} - x),\, y^{n} - \bar{y}\rangle - \langle K' (x^{n+1}-\bar{x}),\,y^{n} -y \rangle.
    \end{align*}
    The last line of this inequality will play an important role in proving convergence of the algorithm. Taking now the values of $\bar{y}$ and $\bar{x}$ as used in the algorithm, $\bar{y}=y^n$ and $\bar{x}=2x^{n}-x^{n-1}$, this last line becomes
    \begin{align*}
        &\langle K' (x^{n+1} - x),\, y^{n} - \bar{y}\rangle - \langle K' (x^{n+1}-\bar{x}),\,y^{n} -y \rangle \\
        &=\left \langle K'(x^{n+1}-x), y^{n}-y^n\right \rangle - \left \langle K'(x^{n+1}-2x^n+x^{n-1}), y^{n}-y\right \rangle \\
        &= \left \langle K'((x^{n+1}-x^n)-(x^n-x^{n-1})), y-y^{n}\right \rangle \\
        &= \left \langle K'(x^{n+1}-x^n), y-y^{n}\right \rangle - \left \langle K'(x^n-x^{n-1}), y-y^{n-1}-y^{n}+y^{n-1}\right \rangle\\
        &= \left \langle K'(x^{n+1}-x^n), y-y^{n}\right \rangle - \left \langle K'(x^n-x^{n-1}), (y-y^{n-1})-(y^{n}-y^{n-1})\right \rangle\\
        &= \left \langle K'(x^{n+1}-x^n), y-y^{n}\right \rangle - \left \langle K'(x^n-x^{n-1}), y-y^{n-1}\right \rangle + \left \langle K'(x^n-x^{n-1}), (y^{n}-y^{n-1})\right \rangle\\
        &\ge \left \langle K'(x^{n+1}-x^n), y-y^{n}\right \rangle - \left \langle K'(x^n-x^{n-1}), y-y^{n-1}\right \rangle - \|K'\|\|x^n-x^{n-1}\|\|y^{n}-y^{n-1}\|\\
    \end{align*}

For any $\alpha >0$, recall the fact that $2ab \leq \alpha a^2+b^2/\alpha$ (since: \(\alpha a^2-2ab+b^2/ \alpha = (\sqrt{\alpha}a-b/\sqrt{\alpha})^2 \ge0\)) for any scalars $a,b$. Letting $\alpha=\sqrt{\sigma/\tau}$, $a=\|x^n-x^{n-1}\|$, and $b=\|y^{n}-y^{n-1}\|$, we get
\begin{align*}
    \|x^n-x^{n-1}\|\|y^{n}-y^{n-1}\| &\leq \frac{\sqrt{\sigma/\tau}}{2} \|x^n-x^{n-1}\|^2+\frac{1}{2\sqrt{\sigma/\tau}}\|y^{n}-y^{n-1}\|^2 \\
    \|K'\|\|x^n-x^{n-1}\|\|y^{n}-y^{n-1}\| &\leq \frac{\|K'\|\sqrt{\sigma\tau}}{2\tau}\|x^n-x^{n-1}\|^2+\frac{\|K'\|\sqrt{\sigma\tau}}{2\sigma}\|y^{n}-y^{n-1}\|^2.
\end{align*}
Combining this with the previous two inequalities reveals that for any $x \in \mathcal{X}$ and $y\in \mathcal{Y}$,
\begin{equation}
\label{N term before summation}
\begin{aligned}
    \frac{\|y-y^{n-1}\|^2}{2\sigma}+\frac{\|x-x^n\|^2}{2\tau} \geq& \mathcal{L}(x^{n+1},y)- \mathcal{L}(x,y^n) + \frac{\|y-y^{n}\|^2}{2\sigma}+\frac{\|x-x^{n+1}\|^2}{2\tau}\\
    &+ \left \langle K'(x^{n+1}-x^n), y-y^{n}\right \rangle - \left \langle K'(x^n-x^{n-1}), y-y^{n-1}\right \rangle\\
    &+ \frac{\|x^n-x^{n+1}\|^2}{2\tau}-\|K'\|\sqrt{\sigma\tau}\frac{\|x^n-x^{n-1}\|^2}{2\tau} +(1-\|K'\|\sqrt{\sigma\tau})\frac{\|y^{n}-y^{n-1}\|^2}{2\sigma}.
\end{aligned}
\end{equation}

Let 

\[
\label{An to Dn}
    \begin{aligned}
A_n &= \frac{\|y-y^{n-1}\|^2}{2\sigma} + \frac{\|x-x^n\|^2}{2\tau} \\
B_n &= (1-\|K'\|\sqrt{\sigma\tau}) \frac{\|y^n - y^{n-1}\|^2}{2\sigma} \\
C_n &= \frac{\|x^n - x^{n-1}\|^2}{2\tau} \\
D_n &= \left\langle K'(x^n - x^{n-1}), y - y^{n-1} \right\rangle
\end{aligned}
\]

Our inequality becomes:
\[
\begin{aligned}
    A_n \geq& \mathcal{L}(x^{n+1},y)- \mathcal{L}(x,y^n)\\ &+ A_{n+1} +B_n +C_{n+1}-\|K'\|\sqrt{\tau\sigma}C_n +D_{n+1}-D_n.
\end{aligned}
\]

\[
\begin{aligned}
    A_n -A_{n+1}  \geq& \mathcal{L}(x^{n+1},y)- \mathcal{L}(x,y^n)\\ & +B_n +C_{n+1}-\|K'\|\sqrt{\tau\sigma}C_n + D_{n+1}-D_n.
\end{aligned}
\]

Now for some integer $N>0$, sum each side of this inequality from $n=0$ to $N-1$, and notice the telescoping cancellation of many of the terms

\noindent\textbf{Derivation of the Fundamental Inequality:}
\begin{align*}
&\sum_{n=0}^{N-1}(A_n - A_{n+1}) \ge 
\sum_{n=0}^{N-1} \left[ \mathcal{L}(x^{n+1},y)- \mathcal{L}(x,y^n) \right] \\
&\quad + \sum_{n=0}^{N-1} B_n + \sum_{n=0}^{N-1} C_{n+1} - \sum_{n=0}^{N-1} \|K'\|\sqrt{\tau\sigma}  C_n + \sum_{n=0}^{N-1} (D_{n+1} - D_n)
\end{align*}

\noindent\textit{Applying telescoping sums and simplifying:}
\begin{align*}
\Rightarrow\quad
A_0 - A_N &\ge 
\sum_{n=0}^{N-1} \left[ \mathcal{L}(x^{n+1},y)- \mathcal{L}(x,y^n) \right] \\
&\quad + \sum_{n=0}^{N-1} B_n + C_N + \sum_{n=0}^{N-2} C_{n+1} - \sum_{n=1}^{N-1} \|K'\|\sqrt{\tau\sigma}  C_n \\
&\quad - \|K'\|\sqrt{\tau\sigma}  C_0 + D_N - D_0
\end{align*}

\noindent\textit{Combining similar terms:}
\begin{align*}
\Rightarrow\quad
&A_0 - A_N \ge 
\sum_{n=0}^{N-1} \left[ \mathcal{L}(x^{n+1},y)- \mathcal{L}(x,y^n) \right] \\
&\quad + \sum_{n=0}^{N-1} B_n + C_N - \|K'\|\sqrt{\tau\sigma}  C_0 \\
&\quad + \sum_{n=0}^{N-2} (1 - \|K'\|\sqrt{\tau\sigma})  C_{n+1} + D_N - D_0
\end{align*}

\noindent\textbf{Final Inequality with Initial Conditions ($x^{-1} = x^0$, $y^{-1} = y^0$):}
\begin{align*}
\Rightarrow\quad
A_0 &\ge A_N +
\sum_{n=0}^{N-1} \left[ \mathcal{L}(x^{n+1},y)- \mathcal{L}(x,y^n) \right] \\
&\quad + (1 - \|K'\|\sqrt{\sigma\tau}) \sum_{n=0}^{N-1} \frac{\|y^n - y^{n-1}\|^2}{2\sigma} + \frac{\|x^N - x^{N-1}\|^2}{2\tau} \\
&\quad + (1 - \|K'\|\sqrt{\tau\sigma}) \sum_{n=0}^{N-2} \frac{\|x^{n+1} - x^n\|^2}{2\tau} \\
&\quad + \langle K'(x^N - x^{N-1}), y - y^{N-1} \rangle
\end{align*}

\noindent\textbf{Bound on the Initial Solution:}
\begin{align*}
&\frac{\|y - y^{0}\|^2}{2\sigma} + \frac{\|x - x^0\|^2}{2\tau} \ge \frac{\|y - y^{N-1}\|^2}{2\sigma} + \frac{\|x - x^N\|^2}{2\tau} \\
&\quad + \sum_{n=0}^{N-1} \left[ \mathcal{L}(x^{n+1},y)- \mathcal{L}(x,y^n) \right] \\
&\quad + (1 - \|K'\|\sqrt{\sigma\tau}) \sum_{n=0}^{N-1} \frac{\|y^n - y^{n-1}\|^2}{2\sigma} + \frac{\|x^N - x^{N-1}\|^2}{2\tau} \\
&\quad + (1 - \|K'\|\sqrt{\tau\sigma}) \sum_{n=0}^{N-2} \frac{\|x^{n+1} - x^n\|^2}{2\tau} \\
&\quad + \langle K'(x^N - x^{N-1}), y - y^{N-1} \rangle
\end{align*}

\noindent\textbf{Simplified Convergence Result:}
\begin{align*}
&\sum_{n=0}^{N-1} \left[ \mathcal{L}(x^{n+1},y)- \mathcal{L}(x,y^n) \right] \\
&\quad + (1 - \|K'\|\sqrt{\sigma\tau}) \sum_{n=0}^{N-1} \frac{\|y^n - y^{n-1}\|^2}{2\sigma} \\
&\quad + (1 - \|K'\|\sqrt{\tau\sigma}) \sum_{n=0}^{N-2} \frac{\|x^{n+1} - x^n\|^2}{2\tau} \\
&\quad + \frac{\|x^N - x^{N-1}\|^2}{2\tau} + \frac{\|y - y^{N-1}\|^2}{2\sigma} + \frac{\|x - x^N\|^2}{2\tau}\\
\leq{}& \frac{\|y - y^{0}\|^2}{2\sigma} + \frac{\|x - x^0\|^2}{2\tau} + \langle K'(x^N - x^{N-1}), y^{N-1} - y \rangle
\end{align*}

Now as before, let \(\alpha=\frac{1}{\tau\|K'\|}\)
\[
    \begin{aligned}
    \langle K'(x^N - x^{N-1}), y^{N-1} - y \rangle &\le \|K'\|\|x^N - x^{N-1}\|\|y^{N-1} - y\|\\
    &\le \frac{\alpha\|K'\|\|x^N - x^{N-1}\|^2}{2} +\frac{\|K'\|\|y^{N-1} - y\|^2}{2\alpha}\\
    &= \frac{\|K'\|\|x^N - x^{N-1}\|^2}{2\|K'\|\tau} +\frac{\|K'\|\tau\|K'\|\|y^{N-1} - y\|^2}{2}\\
    &= \frac{\|x^N - x^{N-1}\|^2}{2\tau} +\frac{\tau\sigma\|K'\|^2\|y^{N-1} - y\|^2}{2\sigma}\\
\end{aligned}
\]

and it follows that
\[
\begin{aligned}
\Rightarrow\quad
&\sum_{n=0}^{N-1} \left[ \mathcal{L}(x^{n+1},y)- \mathcal{L}(x,y^n) \right] \\
&\quad + (1 - \|K'\|\sqrt{\sigma\tau}) \sum_{n=0}^{N-1} \frac{\|y^n - y^{n-1}\|^2}{2\sigma} \\
&\quad + (1 - \|K'\|\sqrt{\tau\sigma}) \sum_{n=0}^{N-2} \frac{\|x^{n+1} - x^n\|^2}{2\tau} \\
&\quad + \frac{\|x^N - x^{N-1}\|^2}{2\tau} + \frac{\|y - y^{N-1}\|^2}{2\sigma} + \frac{\|x - x^N\|^2}{2\tau} \\
&\leq \frac{\|y - y^{0}\|^2}{2\sigma} + \frac{\|x - x^0\|^2}{2\tau} + \frac{\|x^N - x^{N-1}\|^2}{2\tau} + \frac{\tau\sigma\|K'\|^2\|y^{N-1} - y\|^2}{2\sigma}
\end{aligned}
\]
\begin{equation}
\label{eq: boundedness}
\begin{aligned}
\Rightarrow\quad
&\sum_{n=0}^{N-1} \left[ \mathcal{L}(x^{n+1},y)- \mathcal{L}(x,y^n) \right] \\
&\quad + (1 - \|K'\|\sqrt{\sigma\tau}) \sum_{n=0}^{N-1} \frac{\|y^n - y^{n-1}\|^2}{2\sigma} \\
&\quad + (1 - \|K'\|\sqrt{\tau\sigma}) \sum_{n=0}^{N-2} \frac{\|x^{n+1} - x^n\|^2}{2\tau} \\
&\quad + (1 - \tau\sigma\|K'\|^2) \frac{\|y - y^{N-1}\|^2}{2\sigma} + \frac{\|x - x^N\|^2}{2\tau} \\
&\leq \frac{\|y - y^{0}\|^2}{2\sigma} + \frac{\|x - x^0\|^2}{2\tau}
\end{aligned}
\end{equation}

First we choose \((x,y) = (\hat{x},\hat{y})\) a saddle point in \eqref{eq: boundedness}, then it follows from \eqref{eq:saddle-point} that the first summation \(\sum_{n=0}^{N-1} \left[ \mathcal{L}(x^{n+1},\hat{y})- \mathcal{L}(\hat{x},y^n) \right]\) is non-negative, and using our assumption \(0 <1-\tau\sigma\|K'\|^2\), it follows that:
\[
\begin{aligned}
\Rightarrow\quad
&\quad  (1 - \tau\sigma\|K'\|^2) \frac{\|\hat{y} - y^{N-1}\|^2}{2\sigma} + \frac{\|\hat{x} - x^N\|^2}{2\tau} \\
&\leq \frac{\|\hat{y} - y^{0}\|^2}{2\sigma} + \frac{\|\hat{x} - x^0\|^2}{2\tau}
\end{aligned}
\]

where \(\frac{\|\hat{y} - y^{0}\|^2}{2\sigma} + \frac{\|\hat{x} - x^0\|^2}{2\tau}\) is a constant, it follows that the sequence \((x^N,y^N)\) is bounded in \(X \oplus Y\).

Let \(x_N = \sum_{n=1}^{N}{x^n}/N, y_N = \sum_{n=0}^{N-1}{y^n}/N\). Then from inequality \eqref{eq: boundedness}. we obtain:
\[
\begin{aligned}
    \frac{\|y - y^{0}\|^2}{2\sigma} + \frac{\|x - x^0\|^2}{2\tau} \ge &\sum_{n=0}^{N-1} \left[\mathcal{L}(x^{n+1},y)- \mathcal{L}(x,y^n) \right]\\
    =&\sum_{n=0}^{N-1} \left[ \left( c^\top x^{n+1} - \langle K'x^{n+1}, y \rangle + q^\top y \right) 
- \left( c^\top x - \langle K'x, y^n \rangle + q^\top y^n \right) \right]\\
=& [Nc^\top x_N - \langle NK'x_N, y \rangle + Nq^\top y] - [N c^\top x - \langle K'x, Ny_N \rangle + Nq^\top y_N ]
\end{aligned}
\]
\begin{equation}
\label{eq:bounded_xNyN}
\implies \frac{1}{N}(\frac{\|y - y^{0}\|^2}{2\sigma} + \frac{\|x - x^0\|^2}{2\tau}) \ge  [c^\top x_N - \langle K'x_N, y \rangle + q^\top y] - [c^\top x - \langle K'x, y_N \rangle + q^\top y_N ]
\end{equation}
for any \((x,y) \in X \oplus Y\). Which yields:

\[
\begin{aligned}
\mathcal{G}_{B_1 \oplus B_2}(x_N,y_N) &= \max_{y \in B_2}\left\{ \langle y,Kx_N \rangle - F^*(y)+G(x_N)\right\} - \min_{x \in B_1}\left\{ \langle y_N,Kx \rangle - F^*(y_N)+G(x)\right\} \\
&= \max_{y \in B_2}\left\{ c^\top x_N - \langle K'x_N, y \rangle + q^\top y\right\} - \min_{x \in B_1}\left\{ c^\top x - \langle K'x, y_N \rangle + q^\top y_N\right\}\\
&=\max_{y \in B_2}\left\{ c^\top x_N - \langle K'x_N, y \rangle + q^\top y\right\} + \max_{x \in B_1}\left\{-[ c^\top x - \langle K'x, y_N \rangle + q^\top y_N\right]\}\\
&=\max_{(x,y) \in B_1\oplus B_2}\left\{ [c^\top x_N - \langle K'x_N, y \rangle + q^\top y] -[ c^\top x - \langle K'x, y_N \rangle + q^\top y_N\right]\}\\
&\le \frac{1}{N}[\max_{(x,y) \in B_1\oplus B_2}\{\frac{\|y - y^{0}\|^2}{2\sigma} + \frac{\|x - x^0\|^2}{2\tau}\}]
\end{aligned}
\]

Since \(X \oplus Y\) is a finite dimensional Hibert space, and \(\{(x_n,y_n\}_{n=1}^{\infty} \in X \oplus Y\) is a bounded sequence by the sequence \((x^n,y^n)\) is bounded in \(X \oplus Y\). So there exist a point \((x^*,y^*) \in X \oplus Y\), and a subsequence \(\{(x_{N_k},y_{N_k})\}_{k=1}^{\infty}\), such that: 
\[
(x_{N_k},y_{N_k}) \to (x^*,y^*) \text{ as }k\to \infty 
\]
\((x^*,y^*)\) also called \(\textit{weak cluster point}\).

And it follows from \eqref{eq:bounded_xNyN} that: for any \(k \in \mathbb{N}\)
\[
\frac{1}{{N_k}}(\frac{\|y - y^{0}\|^2}{2\sigma} + \frac{\|x - x^0\|^2}{2\tau}) \ge  [c^\top x_{N_k} - \langle K'x_{N_k}, y \rangle + q^\top y] - [c^\top x - \langle K'x, y_{N_k} \rangle + q^\top y_{N_k} ]
\]

Since \((x_{N_k},y_{N_k}) \to (x^*,y^*) \text{ as }k\to \infty \), and \(c^\top (\cdot), q^\top (\cdot),  \langle K'\cdot, \cdot \rangle \) are continuous functions. we get:
\[
\lim_{k\to \infty}\left\{ \frac{1}{{N_k}}(\frac{\|y - y^{0}\|^2}{2\sigma} + \frac{\|x - x^0\|^2}{2\tau}) \right \} \ge  \lim_{k\to \infty}\left\{ [c^\top x_{N_k} - \langle K'x_{N_k}, y \rangle + q^\top y] - [c^\top x - \langle K'x, y_{N_k} \rangle + q^\top y_{N_k} ] \right \}
\]
\[
\begin{aligned}
    \implies 0\ge& [c^\top x^* - \langle K'x^*, y \rangle + q^\top y] - [c^\top x - \langle K'x, y^* \rangle + q^\top y^* ]\\
    &= \mathcal{L}(x^*,y) - \mathcal{L}(x,y^*)
\end{aligned}
\]

This show that \((x^*,y^*)\) satisfy \eqref{eq:saddle-point} and therefore is a saddle point. We have show the weak cluster points of \((x_N,y_N)\) are saddle points.

It remains to prove the convergence to a saddle-point of the whole sequence \((x^n,y^n)\).  \((x^n,y^n)\) is a bounded sequence, so there are some subsequences \((x^{n_k},y^{n_k})\) converges to some limit \((x^*,y^*)\). Observe that \eqref{eq: boundedness} implies that: \(\lim_{n \to \infty} (x^n-x^{n-1}) = \lim_{n \to \infty} (y^n-y^{n-1}) = 0\)

In particular also \(x^{n_k-1}\) and \(y^{n_k-1}\) converge respectively to \(x^*\) and \(y^*\). It follows that the limit \((x^*,y^*)\) is a fixed point of our algorithm, hence a saddle point of our problem. We can take \((x,y) =(x^*,y^*)\) in \eqref{N term before summation}, which we sum from \(n=n_k\) to \(N-1, n>n_k\). Using the same notation in \eqref{An to Dn}, We obtain:
\begin{align*}
&\sum_{n=n_k}^{N-1}(A_n - A_{n+1}) \ge 
\sum_{n=n_k}^{N-1} \left[ \mathcal{L}(x^{n+1},y^*)- \mathcal{L}(x^*,y^n) \right] \\
&\quad + \sum_{n=n_k}^{N-1} B_n + \sum_{n=n_k}^{N-1} C_{n+1} - \sum_{n=n_k}^{N-1} \|K'\|\sqrt{\tau\sigma}  C_n + \sum_{n=n_k}^{N-1} (D_{n+1} - D_n)
\end{align*}

Using the same idea as before, we get:
\begin{align*}
\Rightarrow\quad
A_{n_k} - A_N &\ge 
\sum_{n=n_k}^{N-1} \left[ \mathcal{L}(x^{n+1},y^*)- \mathcal{L}(x^*,y^n) \right] \\
&\quad + \sum_{n=n_k}^{N-1} B_n + C_N - \|K'\|\sqrt{\tau\sigma}  C_{n_k} \\
&\quad + \sum_{n=n_k}^{N-2} (1 - \|K'\|\sqrt{\tau\sigma})  C_{n+1} + D_N - D_{n_k}
\end{align*}

Since \((x^*,y^*)\) is a saddle point, 
\[\left[ \mathcal{L}(x^{n+1},y^*)- \mathcal{L}(x^*,y^n) \right] \ge 0\]

\[
\begin{aligned}
\Rightarrow\quad
&A_{n_k} \ge 
A_N \\
&\quad + \sum_{n=n_k}^{N-1} B_n + C_N - \|K'\|\sqrt{\tau\sigma}  C_{n_k} \\
&\quad + \sum_{n=n_k}^{N-2} (1 - \|K'\|\sqrt{\tau\sigma})  C_{n+1} + D_N - D_{n_k}
\end{aligned}
\]

\[
\begin{aligned}
\Rightarrow\quad
&\frac{\|y^*-y^{{n_k}-1}\|^2}{2\sigma} + \frac{\|x^*-x^{n_k}\|^2}{2\tau} \ge 
\frac{\|y^*-y^{N-1}\|^2}{2\sigma} + \frac{\|x^*-x^N\|^2}{2\tau} \\
&\quad + \frac{\|x^N- x^{N-1}\|^2}{2\tau} - \|K'\|\sqrt{\tau\sigma}  \frac{\|x^{n_k} - x^{{n_k}-1}\|^2}{2\tau} \\
&\quad +  \left\langle K'(x^N - x^{N-1}), y^* - y^{N-1} \right\rangle -  \left\langle K'(x^{n_k} - x^{{n_k}-1}), y^* - y^{{n_k}-1} \right\rangle
\end{aligned}
\]

Let \(k \to \infty\), since \(N>n_k\), and \(x^{n_k} \to x^*,y^{n_k} \to y^*, x^N-x^{N-1}\to0,y^N-y^{N-1}\to0\)
\[
\begin{aligned}
\Rightarrow\quad
&0 \ge 
\lim_{N \to \infty} \frac{\|y^*-y^{N-1}\|^2}{2\sigma} + \frac{\|x^*-x^N\|^2}{2\tau}
\end{aligned}
\]
from which we easily deduce that \(x^N \to x^*, y^N \to y^*\) as \(N\to \infty\).
\end{proof}

\end{document}